\documentclass[a4paper,11pt,british,oneside]{article}

\usepackage[utf8]{inputenc}
\usepackage[dvipsnames,svgnames,x11names,hyperref]{xcolor}
\usepackage{amsmath,amssymb,amsthm,enumerate,textgreek,hyphenat}
\usepackage[nosort,nocompress,noadjust]{cite}

\usepackage[bookmarks,pdfpagelabels,hyperfootnotes=false,colorlinks,
    linkcolor={red!60!black},
    citecolor={blue!50!black},
    urlcolor={blue!80!black}]{hyperref}

\numberwithin{thcounter}{section}
\numberwithin{equation}{section}

\renewcommand{\eqref}[1]{\hyperref[#1]{(\ref{#1})}}

{\theoremstyle{definition}\newtheorem{definition}{Definition}[section]

\newtheorem{remark}[definition]{Remark}
\newtheorem{question}[definition]{Question}
}

\newtheorem{proposition}[definition]{Proposition}
\newtheorem{lemma}[definition]{Lemma}
\newtheorem{theorem}[definition]{Theorem}

\newtheorem{letterthm}{Theorem}

\setlength{\evensidemargin}{0pt}
\setlength{\oddsidemargin}{0pt}
\setlength{\topmargin}{-20pt}
\setlength{\footskip}{40pt}
\setlength{\textheight}{690pt}
\setlength{\textwidth}{450pt}
\setlength{\headsep}{10pt}
\setlength{\parindent}{0pt}
\setlength{\parskip}{1ex plus 0.5ex minus 0.2ex}


\renewcommand{\bar}{\overline}
\renewcommand{\tilde}{\widetilde}
\renewcommand{\hat}{\widehat}

\providecommand\numberthis{\addtocounter{equation}{1}\tag{\theequation}}

\providecommand{\NN}{\mathbb{N}}
\providecommand{\ZZ}{\mathbb{Z}}
\providecommand{\QQ}{\mathbb{Q}}
\providecommand{\CC}{\mathbb{C}}
\providecommand{\FF}{\mathbb{F}}
\providecommand{\bF}{\mathbf{F}}
\providecommand{\counit}{\varrho}
\providecommand{\Amp}{\mathbf{A}}
\providecommand{\SU}{\mathrm{SU}}
\providecommand{\NC}{\mathrm{NC}}
\providecommand{\PGL}{\mathrm{PGL}}

\providecommand{\KK}{\mathbb{K}}
\providecommand{\GG}{\mathbb{G}}
\providecommand{\Tr}{\operatorname{Tr}}
\providecommand{\Aut}{\operatorname{Aut}}

\providecommand{\onb}{\operatorname{onb}}

\providecommand{\Irr}{\operatorname{Irr}}
\providecommand{\mult}{\operatorname{mult}}
\providecommand{\id}{\mathrm{id}}
\providecommand{\Rep}{\operatorname{Rep}}
\providecommand{\AutTr}{\Aut^{\mathrm{tr}}}
\providecommand{\recht}{\rightarrow}
\providecommand{\cC}{\mathcal{C}}
\providecommand{\Stab}{\operatorname{Stab}}
\providecommand{\eps}{\varepsilon}
\providecommand{\cC}{\mathcal{C}}
\providecommand{\ot}{\otimes}
\providecommand{\al}{\alpha}
\providecommand{\be}{\beta}
\providecommand{\cU}{\mathcal{U}}
\providecommand{\actson}{\curvearrowright}
\providecommand{\cH}{\mathcal{H}}
\providecommand{\cK}{\mathcal{K}}
\providecommand{\cS}{\mathcal{S}}

\providecommand{\cL}{\mathcal{L}}
\providecommand{\PG}{\operatorname{PG}}
\providecommand{\cO}{\mathcal{O}}
\providecommand{\Hilb}{\operatorname{Hilb}}
\providecommand{\HH}{\mathbb{H}}
\providecommand{\ovt}{\mathbin{\overline{\otimes}}}
\providecommand{\om}{\omega}
\providecommand{\supp}{\operatorname{supp}}
\providecommand{\Ad}{\operatorname{Ad}}
\providecommand{\Om}{\Omega}
\providecommand{\SL}{\operatorname{SL}}
\providecommand{\cA}{\mathcal{A}}
\providecommand{\cG}{\mathcal{G}}
\providecommand{\Out}{\operatorname{Out}}

\providecommand{\cV}{\mathcal{V}}

\usepackage{tikz,textgreek}
\usetikzlibrary{calc,decorations,decorations.pathreplacing}
\newcommand{\roundedbox}[4]{
    \draw[rounded corners=5pt, thick, fill=white] ($#1+(-#2,-#2)+(-#3,0)$) rectangle ($#1+(#2,#2)+(#3,0)$);
    \coordinate (ZZa) at ($#1+(-#3,0)$);
    \coordinate (ZZb) at ($#1+(#3,0)$);
    \node at ($1/2*(ZZa)+1/2*(ZZb)$) {#4};
}

\usepackage{tikz}

\pagestyle{plain}
\begin{document}

\begin{center}
    {\LARGE\boldmath\bf Property~(T) discrete quantum groups and\vspace{0.5ex}\\ subfactors with triangle presentations}

    \bigskip

    {\sc by Stefaan Vaes\footnote{\noindent KU~Leuven, Department of Mathematics, Leuven (Belgium). E-mails: stefaan.vaes@kuleuven.be and matthias.valvekens@kuleuven.be.}\textsuperscript{,}\footnote{\noindent Supported by European Research Council Consolidator Grant 614195, and by long term structural funding~-- Methusalem grant of the Flemish Government.} and Matthias Valvekens\textsuperscript{1,2}}
\end{center}

\begin{abstract}
\noindent Kazhdan's property~(T) has been studied for several discrete group-like structures, including standard invariants of Jones' subfactors and discrete quantum groups. We prove a \.Zuk-type spectral gap criterion for property~(T) in this setting. This allows us to construct a family of property~(T) discrete quantum groups, as well as subfactor standard invariants, given by generators and a triangle presentation. These are the first examples of property~(T) discrete quantum groups that are not directly related to a discrete group with property~(T).
\end{abstract}

\section{Introduction}

Quantum versions of discrete groups appear in several different contexts, in particular as standard invariants of Jones' subfactors and as duals of Woronowicz' compact quantum groups, including as $q$-deformed enveloping algebras. In each of these settings, Kazhdan's property~(T) has been introduced and plays an important role.

The basic construction associates to any finite index subfactor $N \subset M$ the Jones tower of factors $N \subset M \subset M_1 \subset M_2 \subset \cdots$. The standard invariant of $N \subset M$ is given by the relative commutants $M_i' \cap M_j$, which are finite direct sums of matrix algebras, together with the Jones projections belonging to these relative commutants. This intricate combinatorial invariant was axiomatized in \cite{popa-lambda-lattices} as a $\lambda$-lattice and in \cite{jones:planar} as a planar algebra.

The point of view to interpret the standard invariant $\cG$ of a subfactor $N \subset M$ as a discrete group-like structure acting on $M$ has been very useful. In particular, several notions from representation theory and harmonic analysis have been introduced for standard invariants and they play a central role in the theory. Most notably in \cite{popa-classification-amenable,popa-symmenv}, amenability of standard invariants was defined and used to prove that the standard invariant is a complete invariant for amenable hyperfinite subfactors.

Kazhdan's property~(T) was defined for standard invariants in \cite{popa-symmenv}. Examples of property~(T) subfactors were given in \cite{bisch-popa}, of the form $M^H \subset M \rtimes K$, where $H$ and $K$ are finite groups acting by outer automorphisms on the II$_1$ factor $M$ and generating a property~(T) subgroup of the outer automorphism group $\Out(M)$. A variant of this construction, based on a locally compact property~(T) group $G$ generated by compact open subgroups $H,K < G$ was introduced in \cite{arano-vaes}.

The first property~(T) standard invariants not directly related to property~(T) groups were only constructed recently in \cite{pv-repr-subfactors}, making use of \cite{arano-suqn} and the connection with the representation theory of the quantum groups $\SU_q(n)$, $n \geq 3$, as we discuss below.

Parallel to the development of subfactors and motivated by Tannaka--Krein duality for compact groups and by the $q$-deformations of compact Lie groups in \cite{drinfeld,jimbo}, compact quantum groups were introduced in \cite{woron-su2,woron-tkdual,woron-cqg}. The dual of a compact quantum group is a discrete quantum group: a direct sum of matrix algebras equipped with a comultiplication satisfying a natural set of axioms. When each of these matrix algebras is one-dimensional, we recover the algebra $\ell^\infty(\Gamma)$ of bounded functions on a discrete group $\Gamma$ and the comultiplication is given by dualizing the multiplication on $\Gamma$. An intrinsic definition for discrete quantum groups was given in \cite{van-daele}.

Although property~(T) for discrete quantum groups was defined in \cite{fima-prop-T} much in the same way as for discrete groups, until now, there were no genuinely quantum examples. The known examples were either twists of property~(T) groups in \cite{fima-prop-T} or equal up to finite index to a property~(T) group in \cite{fima-bicrossed}. The main goal of this paper is to construct such genuinely quantum examples of property~(T) discrete quantum groups.

A unifying point of view on both subfactor standard invariants and discrete quantum groups is given by the concept of a rigid $C^*$-tensor category (see Section \ref{sec.tensor-category} for basic terminology and \cite{neshveyev-tuset} for a comprehensive introduction).
%
%
The easiest examples of rigid $C^*$-tensor categories are the category $\Rep(K)$ of finite-dimensional unitary representations of a compact group $K$ and the trivial example $\Hilb_f$, the category of finite-dimensional Hilbert spaces. Similarly, the category $\Rep(\GG)$ of finite-dimensional unitary representations of a compact quantum group $\GG$ is a rigid $C^*$-tensor category.

When $N \subset M$ is a finite index subfactor, the category $\cC$ of all $M$-bimodules that arise as an $M$-subbimodule of some $M_n$ in the Jones tower $N \subset M \subset M_1 \subset M_2 \subset \cdots$, together with the relative tensor product of $M$-bimodules, is a rigid $C^*$-tensor category. By \cite{popa-lambda-lattices}, every finitely generated rigid $C^*$-tensor category arises in this way.

When $\cC = \Rep(\GG)$ is the representation category of a compact quantum group, one associates to every unitary representation its carrier Hilbert space. This provides the fiber functor $\Rep(\GG) \recht \Hilb_f$. The Tannaka--Krein theorem of \cite{woron-tkdual} says that also the converse holds: given a rigid $C^*$-tensor category $\cC$ and a fiber functor $\cC \recht \Hilb_f$, there is a canonical compact quantum group $\GG$ such that $\cC = \Rep(\GG)$.

An intrinsic definition of property~(T) for rigid $C^*$-tensor categories was given in \cite{pv-repr-subfactors}, in the context of a general unitary representation theory for such categories. When $\cC$ is the category generated by a subfactor $N \subset M$, it is proved in \cite{pv-repr-subfactors} that property~(T) of $\cC$ is equivalent with property~(T) for the standard invariant of $N \subset M$ as defined in \cite{popa-symmenv}. When $\cC = \Rep(\GG)$ is the representation category of a compact quantum group $\GG$, it is also proved in \cite{pv-repr-subfactors} that property~(T) of $\cC$ is equivalent with the central property~(T) for the discrete quantum group $\hat{\GG}$ as defined in \cite{arano-suqn}. The results in \cite{arano-suqn,corey-jones,arano-drinfeld-unitary-duals} then say that the representation categories of $\SU_q(n)$, $n \geq 3$, and more generally, of all $q$-deformations of higher rank compact Lie groups have property~(T).

Altogether this provides numerous very interesting classes of rigid $C^*$-tensor categories $\cC$ with property~(T). But it is still wide open whether any of these representation categories can be used to construct discrete quantum groups with property~(T). The problem is the following. Only when $\GG$ is of Kac type, i.e.\ when the Haar state on $\GG$ is a trace, the central property~(T) for $\hat{\GG}$ is equivalent with the actual property~(T) for $\hat{\GG}$. So in order to construct property~(T) discrete quantum groups, one should find \emph{dimension preserving} fiber functors $\Rep(\SU_q(n)) \recht \Hilb_f$ for some $n \geq 3$ and $q \neq -1,0,1$. When $n = 2$, such fiber functors exist for the appropriate values of $q$ and give rise to the universal orthogonal quantum groups $A_o(k)$. When $n = 3$, the existence of such a fiber functor is a very interesting open problem that is equivalent to finding new and exotic Hecke symmetries (see \cite{gurevich}). In this paper, we introduce a new class of rigid $C^*$-tensor categories that have property~(T) and that admit explicit dimension preserving fiber functors to $\Hilb_f$.

The two main results of this paper are the following. In the first part, we prove a quantum version of \.Zuk's spectral gap criterion for property~(T) in \cite{zuk-propT}. Given a countable group $\Gamma$ and a symmetric finite generating set $S \subset \Gamma \setminus \{e\}$, the \emph{link} is the finite graph with vertex set $S$ and with an edge between $g,h \in S$ if and only if $g^{-1} h \in S$. By \cite{zuk-propT}, if the link is connected and if the first positive eigenvalue of its combinatorial Laplacian is larger than $1/2$, then $\Gamma$ has property~(T).

\begin{letterthm}[{see Theorem \ref{thm:spectral-gap-criterion}}]\label{thm-A}
Let $\cC$ be a rigid $C^*$-tensor category and $S \subset \Irr(\cC) \setminus \{\eps\}$ a symmetric finite generating set. We canonically define a finite-dimensional Hilbert space $\cV$ in \eqref{eqn:vertex-space} and a Laplace operator $\Delta$ on $\cV$ in \eqref{eqn:laplacian-def} so that the following holds: if $0$ is a simple eigenvalue of $\Delta$ and if the next eigenvalue is larger than $1/2$, then $\cC$ has property~(T).
\end{letterthm}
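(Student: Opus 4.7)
The plan is to mimic Żuk's original strategy from \cite{zuk-propT}, but replacing scalar-valued functions on the link graph by $\cH$-valued ``link cocycles'' attached to an almost-invariant vector, where $\cH$ is the Hilbert space of a representation of $\cC$ in the sense of \cite{pv-repr-subfactors}. Concretely, suppose $(\pi,\cH)$ is a unitary representation of $\cC$ admitting a sequence $(\xi_n)$ of almost $S$-invariant unit vectors, so that $\|\pi(\al)\xi_n - \xi_n\| \recht 0$ uniformly for $\al \in S$ in the appropriate sense (i.e.\ for each $\al \in S$, the isotypical component of $\xi_n$ corresponding to the trivial bimodule inside $\al \ot \bar\al$ dominates in the limit). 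To each such $\xi_n$ I would associate a vector $\eta_n \in \cV \ot \cH$ whose $\al$-component, for $\al \in S$, records the canonical image of $\xi_n$ under a normalised coevaluation/evaluation pair built from standard solutions of the conjugate equations for $\al$. The goal is then to show that invariance of $\xi_n$ under the whole generating set $S$ is controlled by the quadratic form $\langle (\Delta \ot 1)\eta_n, \eta_n \rangle$.

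The core of the argument is the Garland-type identity: writing $\Delta = 1 - A$ for a normalised ``link adjacency'' operator $A$ on $\cV$, the matrix elements of $A$ between the $\al$- and $\be$-components of $\cV$ should be expressible as a sum over generators $\gamma \in S$ appearing in the fusion $\bar\al \ot \be$, with weights determined by multiplicities and quantum dimensions and with the sum running over an orthonormal basis of $\Hom(\gamma, \bar\al \ot \be)$. This is precisely what makes $\langle A \eta_n, \eta_n\rangle - \|\eta_n\|^2$ a weighted sum of ``triangle'' inner products $\langle \pi(\gamma)\xi_n, \xi_n\rangle - 1$, each of which tends to $0$ by almost-invariance. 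In this way one obtains $\langle \Delta \eta_n, \eta_n\rangle \recht 0$, while the spectral gap hypothesis ensures that $\eta_n$ is asymptotically contained in $\ker \Delta \ot \cH$.

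The assumption that $0$ is a simple eigenvalue of $\Delta$ then pins down the shape of the limiting vector: up to normalisation there is a unique $v_0 \in \cV$ with $\Delta v_0 = 0$, and the condition $\eta_n - v_0 \ot \xi_n \recht 0$ encodes that $\xi_n$ is asymptotically fixed by each $\al \in S$ coherently across $S$. Combined with the eigenvalue gap $\lambda_1(\Delta) > 1/2$, a standard perturbation computation — the quantum analogue of the classical Żuk inequality $\|\bar f - f\|^2 \leq (2 - 1/\lambda_1)\cdot (\text{Dirichlet form})$ — produces a true $S$-invariant vector $\xi_\infty$ as a weak limit of suitable averages of the $\xi_n$. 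Because $S$ generates $\Irr(\cC)$, any $S$-invariant vector is $\cC$-invariant, and property~(T) follows.

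The main obstacle I expect is the precise setup of the Garland identity in the categorical setting. Unlike the group case, where each element $g^{-1}h \in S$ contributes a single edge, here the ``edge'' between two generators $\al,\be$ is actually the morphism space $\Hom(\gamma, \bar\al \ot \be)$ summed over $\gamma \in S$, and the weights involve $d(\al)$, $d(\be)$, $d(\gamma)$ and the chosen standard solutions. Getting the normalisations right so that (i) the link Laplacian $\Delta$ is indeed self-adjoint with the spectral gap formulated in the theorem, and (ii) the quadratic form $\langle \Delta \eta_n, \eta_n\rangle$ exactly detects failure of $S$-invariance of $\xi_n$, is the delicate combinatorial bookkeeping behind \eqref{eqn:vertex-space} and \eqref{eqn:laplacian-def}. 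Once this bookkeeping is fixed, the reduction to almost-invariance and the spectral gap argument proceed in close parallel to the scalar case.
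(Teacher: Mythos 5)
Your overall blueprint — amplify a vector $\xi$ to $\eta = \Amp(\xi) \in \cV \ot \cH$, show that the quadratic form $\langle (\Delta \ot 1)\eta, \eta\rangle$ detects failure of $S$-invariance of $\xi$ (the Garland identity), and then apply the spectral gap $\lambda_1 > 1/2$ — is exactly the architecture of the paper's proof, so you have correctly identified the right strategy. There are two places where the proposal would fail or needs repair.

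First, the paper does \emph{not} produce an invariant vector as a ``weak limit of suitable averages of the $\xi_n$'', and this step of your plan is a genuine misstep. Almost-invariant unit vectors in an infinite-dimensional representation can converge weakly to $0$, so a weak-limit argument cannot by itself produce a nonzero invariant vector; something quantitative must replace it. The paper's mechanism is Lemma~\ref{thm:spectral-gap-lemma}: property~(T) is reformulated as a spectral-gap condition on the single averaging element $h = \kappa^{-1}\sum_{\alpha \in S}\nu(\alpha) 1_\alpha$ of the tube algebra $\cA$, acting in a universal nondegenerate Hilbert $\cA$-module. Given $\sigma(h) \subset [-1, 1-\varepsilon]\cup\{1\}$, any $(S,\varepsilon)$-invariant vector $\xi$ satisfies $\|\xi - P\xi\|^2 \le \varepsilon^{-1}\langle\xi\cdot(1-h),\xi\rangle < \|\xi\|^2$, so the projection $P\xi$ onto $\ker(1-h)$ is already nonzero — no limits are taken. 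Without this (or an equivalent quantitative reformulation), the passage from ``$\langle\Delta\eta_n,\eta_n\rangle \to 0$'' to ``there exists an invariant vector'' does not close.

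Second, the Garland identity $\kappa^{-1}\langle(\Delta \ot 1)\Amp(\xi),\Amp(\xi)\rangle = \langle\xi\cdot(1-h),\xi\rangle$ by itself is not enough: you also need to compute the companion quantity $\kappa^{-1}\langle((1-P_C)\ot 1)\Amp(\xi),\Amp(\xi)\rangle = \langle\xi\cdot(1-h^2),\xi\rangle$. The theorem is deduced by feeding the spectral-gap inequality $\Delta \ge \lambda_1(1-P_C)$ into these two identities to obtain $\lambda_1\langle\xi\cdot(1-h^2),\xi\rangle \le \langle\xi\cdot(1-h),\xi\rangle$, which via the spectral mapping theorem forces $\sigma(h)\setminus\{1\}\subset[-1,\lambda_1^{-1}-1]$, and then Lemma~\ref{thm:spectral-gap-lemma} delivers the explicit Kazhdan pair $(S, 2-\lambda_1^{-1})$. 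Your proposal keeps this constant-complement identity tacit; you should make it explicit, as it is what converts the link spectral gap into a spectral gap for $h$. In summary, the route you sketched is the right one, but you should swap the representation-theoretic ``almost-invariant sequence plus weak limit'' framing for the tube-algebra operator $h$ and its spectrum, which is where the actual quantitative estimate lives and where the bookkeeping you flag as the obstacle becomes tractable.
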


The main tool to prove Theorem \ref{thm-A} is the tube algebra $\cA$ of a rigid $C^*$-tensor category $\cC$, as introduced in \cite{ocneanu-chirality} (see Section \ref{sec:tube-algebra}). By \cite{ghosh-jones,neshveyev-yamashita}, property~(T) of $\cC$ is equivalent with the trivial representation of $\cA$ being isolated among all Hilbert space representations of $\cA$. This is a crucial ingredient in the proof of Theorem \ref{thm-A}.

The advantage of Theorem \ref{thm-A} is that it provides a verifiable and computable sufficient condition for property~(T). In Proposition \ref{prop:apply-to-su-q-3}, we show that the criterion is satisfied when $\cC = \Rep(\SU_q(3))$ and $S = \{u,\bar{u}\}$, where $u$ is the fundamental representation of $\SU_q(3)$. This provides an elementary proof for one of the results in \cite{arano-suqn}.

We then turn to the construction of a new class of property~(T) standard invariants and property~(T) discrete quantum groups, using a generators and relations approach. The simplest examples where \.Zuk's original criterion is satisfied are given by the triangle presentations of \cite{cmsz1}. Given a finite set $F$, a triangle presentation is a subset $T \subset F \times F \times F$ satisfying a natural set of conditions that we recall in Definition \ref{def:triangle-pres} below. One then defines the group $\Gamma_T$ with generators $(a_x)_{x \in F}$ and relations $a_x a_y a_z = e$ for all $(x,y,z) \in T$. In \cite{zuk-propT}, it is proved that $\Gamma_T$ together with the symmetric generating set $S = \{a_x,a_x^{-1} \mid x \in F\}$ satisfies the spectral gap criterion.

In Definition \ref{def:G-T}, we define for every triangle presentation $T \subset F \times F \times F$, the compact quantum group $\GG_T$ whose underlying $C^*$-algebra $C(\GG)$ is the universal $C^*$-algebra generated by elements $(u_{xy})_{x,y \in F}$ satisfying the following two relations: $u$ is a unitary element of $B(\CC^F) \ot C(\GG)$ and
$$\sum_{(x,y,z) \in T} \; u_{ax} \; u_{by} \; u_{cz} = \begin{cases} 1 &\quad\text{if $(a,b,c) \in T$,}\\ 0 &\quad\text{if $(a,b,c) \not\in T$.}\end{cases}$$
By construction, $\GG$ is of Kac type, so that the representation category $\Rep(\GG)$ canonically comes with a dimension preserving fiber functor to $\Hilb_f$. Also by construction, the discrete group $\Gamma_T$ is a quotient of the discrete quantum group $\widehat{\GG_T}$.

By \cite{cmsz1}, the Cayley graph of $\Gamma_T$ is the $1$-skeleton of an $\tilde{A}_2$-building $\Delta_T$. In many cases, although not in all cases, this $\tilde{A}_2$-building is \emph{classical}, i.e.\ the Bruhat--Tits $\tilde{A}_2$-building of a local field (see Section \ref{sec:bruhat-tits}). Our second main result is then the following.

\begin{letterthm}[{see Theorem \ref{thm:repgt-main-theorem}}]\label{thm-B}
When $T \subset F \times F \times F$ is a triangle presentation such that $\Delta_T$ is the Bruhat--Tits $\tilde{A}_2$-building of a commutative local field, the discrete quantum group $\widehat{\GG_T}$ and the rigid $C^*$-tensor category $\Rep(\GG_T)$ both have property~(T).
\end{letterthm}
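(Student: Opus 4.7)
The plan is to derive the theorem from Theorem~A applied to $\cC = \Rep(\GG_T)$ with the symmetric generating set $S = \{u,\bar u\} \subset \Irr(\Rep(\GG_T))$, where $u$ is the fundamental representation of $\GG_T$ from Definition~\ref{def:G-T}. Since $\GG_T$ is of Kac type by construction, the dimension preserving fiber functor $\Rep(\GG_T) \recht \Hilb_f$ is automatic, and the equivalences recalled in the introduction show that property~(T) of $\Rep(\GG_T)$ forces central property~(T), hence actual property~(T), of $\widehat{\GG_T}$. The task therefore reduces to verifying the spectral hypothesis of Theorem~A.

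First, I would use the generators-and-relations presentation of $\GG_T$ to pin down the intertwiner spaces between short tensor words in $u$ and $\bar u$. The relations $\sum_{(x,y,z)\in T} u_{ax}\,u_{by}\,u_{cz} = \chi_T(a,b,c)$ tautologically produce a family of morphisms $u^{\ot 3}\recht \eps$ indexed by $T$, and the universal property of $\GG_T$, combined with Woronowicz' Tannaka--Krein theorem, forces these to span $\operatorname{Hom}(u^{\ot 3},\eps)$. The axioms satisfied by a triangle presentation yield the duality morphisms identifying $\bar u$ with the contragredient of $u$, which then determine $\operatorname{Hom}(u\ot\bar u,\eps)$ and $\operatorname{Hom}(\bar u\ot u,\eps)$. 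Together, these morphism spaces are precisely the data fed into the definitions \eqref{eqn:vertex-space} and \eqref{eqn:laplacian-def} of the vertex space $\cV$ and the Laplacian $\Delta$.

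The central step is then to identify $\Delta$ with the combinatorial Laplacian of the link of a vertex in the $\tilde A_2$-building $\Delta_T$. Under the classicality hypothesis this link is the incidence graph of the projective plane $\PG(2,\KK)$, where $\KK = \FF_q$ is the residue field of the given local field. That incidence graph is $(q+1)$-regular on $2(q^2+q+1)$ vertices, with adjacency spectrum $\{\pm(q+1),\pm\sqrt q\}$, the extremal eigenvalues being simple and $\pm\sqrt q$ each occurring with multiplicity $q^2+q$. Its natural bipartition into points and lines corresponds on the categorical side to the decomposition $S = \{u\}\sqcup\{\bar u\}$. It follows that $0$ is a simple eigenvalue of $\Delta$ and that the smallest positive eigenvalue equals $1-\sqrt q/(q+1)$. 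This quantity exceeds $1/2$ for every prime power $q\geq 2$, as the inequality is equivalent to $(\sqrt q-1)^2>0$. Applying Theorem~A yields property~(T) of $\Rep(\GG_T)$, and hence of $\widehat{\GG_T}$.

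The principal obstacle is the identification performed in the third step: one must verify that the categorical Laplacian extracted from the tube algebra of $\Rep(\GG_T)$ really does reduce to the classical link Laplacian of $\PG(2,\KK)$. This requires matching the bilinear data encoded by the triangles of $T$, together with the evaluation and coevaluation morphisms for $u$, to the point-line incidence pairing of $\PG(2,\KK)$. The match is natural because, by \cite{cmsz1} and the classicality hypothesis, $T$ is assembled from the chambers of a Bruhat--Tits building whose residue at every vertex is exactly the projective plane over the residue field, and because $\GG_T$ is of Kac type, so that all quantum dimensions are integers and the relevant normalisations reduce to uniform counting measures. Once this dictionary is in place, the spectral computation and the appeal to Theorem~A are routine.
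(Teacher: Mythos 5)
Your overall strategy—invoke Theorem~A with $S=\{u,\bar u\}$ and then use the Kac-type observation to pass from property~(T) of $\Rep(\GG_T)$ to property~(T) of $\widehat{\GG_T}$—is the correct framing and matches the paper. But the central step of your argument, identifying the categorical Laplacian $\Delta$ with the combinatorial Laplacian of the link of a vertex in the building $\Delta_T$ (i.e.\ the incidence graph of $\PG(2,\FF_q)$), is incorrect, and this is where the real work of the proof lives.

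The vertex space $\cV$ from~\eqref{eqn:vertex-space} is indexed by pairs $(\alpha,i)$ with $\alpha\in S=\{u,\bar u\}$ and $i\in\Irr(\cC)$ appearing in $\alpha\bar\alpha$. It is \emph{not} the space of functions on the $2(q^2+q+1)$ vertices of the link, and there is no natural identification between the two. Concretely, once one knows that $u\bar u$ decomposes as $\eps\oplus\alpha\oplus\beta$ into three inequivalent irreducibles, $\cV$ is $6$-dimensional, and the paper computes
\[
\sigma(\Delta)=\Bigl\{0,\ 1-\tfrac{\sqrt q}{q+1},\ 1-\tfrac{1}{q+1},\ 1+\tfrac{1}{q+1},\ 1+\tfrac{\sqrt q}{q+1},\ 2\Bigr\}.
\]
The eigenvalues $1\pm\tfrac{1}{q+1}$ are not eigenvalues of the normalized link Laplacian of the incidence graph (whose spectrum is $\{0,1\pm\sqrt q/(q+1),2\}$), so the two operators genuinely differ. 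That the smallest positive eigenvalue happens to coincide is a computational coincidence that your argument does not explain; as Remark~\ref{rem:principal-graph-part} emphasizes, only the summand $\Delta_\eps$ of $\Delta$ is captured by fusion-rule data of the type your "link" argument sees, and the remaining summands $\Delta_i$ for $i\ne\eps$ depend on more information. Your appeal to Kac type and uniform counting measures does not bridge this gap: integrality of dimensions does not collapse the higher summands of $\Delta$ into a graph Laplacian.

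A second, independent gap is that you never establish $\dim(u\bar u,u\bar u)=3$, i.e.\ that $u\bar u$ has exactly three irreducible summands. Without this, the vertex space and hence $\Delta$ could be larger and the spectrum different (in fact Lemma~\ref{thm:basic-intertwiners} only gives $\dim(u\bar u,u\bar u)\ge 3$, so equality is nontrivial). The paper proves equality precisely where the classicality of $\Delta_T$ enters: it constructs a monoidal functor $\bF_T^G:\Rep(\GG_T)\recht\cC_f(K<G)$ with $G=\AutTr(\Delta_T)$ and $K=\Stab_G(e)$ (Proposition~\ref{thm:repgt-functor}), and uses transitivity of $K$ on the shells $S_{n,m}$ and $2$-transitivity on $S_{1,0}$ (Proposition~\ref{thm:generic-transitivity}), available for Bruhat--Tits buildings of commutative local fields. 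Your proposal gestures at "the universal property of $\GG_T$ combined with Tannaka--Krein" forcing the intertwiner spaces to be what you expect, but there is no reason a priori that the universal quantum group defined by the relations~\eqref{eqn:woron-twisted-unimodularity} doesn't have extra intertwiners; ruling those out is exactly what requires the passage through $\cC_f(K<G)$. (A minor side point: the relations give a single invariant vector $E=\sum_{(a,b,c)\in T}e_a\ot e_b\ot e_c$ in $u^{\ot 3}$, not a family of morphisms indexed by $T$.)

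So the proposal has the right target statement (apply Theorem~A to $S=\{u,\bar u\}$, compute $\Delta$, conclude) but the two hard steps—controlling $\dim(u\bar u,u\bar u)$ and actually computing $\Delta$ on the $6$-dimensional vertex space—are replaced by an identification with the classical link Laplacian that is false. To repair the argument you would need to (i) prove $\dim(u\bar u,u\bar u)=3$, which is where the classicality of the building and the $2$-transitivity of the vertex stabilizer are used, and (ii) carry out the explicit $6\times 6$ Laplacian computation using the concrete isometries $Z_u^\alpha, Z_{\bar u}^\alpha, Z_u^\beta, Z_{\bar u}^\beta$ built from the projections $P,Q,R$ of Lemma~\ref{thm:basic-intertwiners}, rather than importing the incidence-graph spectrum.
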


In Remark \ref{rem:planar-algebra}, we explain how to associate a planar algebra or $\lambda$-lattice to a triangle presentation $T$. Under the same conditions as in Theorem \ref{thm-B}, these $\lambda$-lattices have property~(T) as well.

In Section \ref{sec:classification}, we classify the compact quantum groups $\GG_T$ up to isomorphism, in terms of isomorphisms between triangle presentations. Note here that in \cite{cmsz1,cmsz2}, a complete classification of triangle presentations of low order was given. In combination with Theorem \ref{thm:classif-compact}, we thus find that our construction gives rise to numerous non-isomorphic discrete quantum groups $\widehat{\GG_T}$.

An important tool in proving Theorem \ref{thm-B} is the locally compact group $G = \AutTr(\Delta_T)$ of type-rotating automorphisms of the building $\Delta_T$, together with the compact open subgroup $K$ given by the stabilizer of the origin $e \in \Delta_T$. We construct a monoidal functor from $\Rep(\GG_T)$ to the rigid C$^*$-tensor category $\mathcal{C}_f(K<G)$ associated in \cite{arano-vaes} to any compact open subgroup $K$ of a locally compact group $G$. We use this monoidal functor to analyze the low-order tensor powers of the fundamental representation of $\GG_T$.

Since $\Delta_T$ is also the Cayley graph of the discrete group $\Gamma_T$ defined as above by the triangle presentation $T$, we have $\Gamma_T < G$ as a discrete subgroup. By construction, $G = \Gamma_T \, K$ forms a matched pair of the discrete group $\Gamma_T$ and the compact group $K$, in the sense of \cite{Kac,baaj-skandalis,vaes-vainerman,fima-bicrossed}. The bicrossed product construction associates to this matched pair the compact quantum group $\HH_T$. We prove in Section \ref{sec:classification} that the representation category $\Rep(\HH_T)$ is given by $\mathcal{C}_f(K<G)$. We thus obtain the consecutive quotients of discrete quantum groups $\widehat{\GG_T} \recht \widehat{\HH_T} \recht \Gamma_T$. In particular, we also provide numerous examples of bicrossed products with property~(T). Note here that $\Gamma_T$ is a highly proper quotient of $\widehat{\GG_T}$ and that there is no obvious way to obtain the discrete quantum group $\widehat{\GG_T}$ or its von Neumann algebra from $\Gamma_T$ or any other discrete group. It is rather so that several quite different mathematical objects, including tensor categories, buildings, planar algebras, quantum groups and discrete groups, can be constructed from the same intricate combinatorics provided by triangle presentations.

In general, whenever $K$ is a compact open subgroup of a locally compact group $G$ and whenever $\Gamma < G$ is a discrete group that complements $K$ in the sense that $G = \Gamma \, K$ and $\Gamma \cap K = \{e\}$, we construct a fiber functor on the rigid C$^*$-tensor category $\mathcal{C}_f(K<G)$ whose corresponding compact quantum group is precisely the bicrossed product of $\Gamma$ and $K$. This leads us to the intriguing open question whether all fiber functors on $\mathcal{C}_f(K<G)$ are given by such complementing discrete subgroups $\Gamma < G$~?

In the final Section \ref{sec:diagram-calculus}, we initiate the classification up to monoidal equivalence of the rigid $C^*$-tensor categories $\Rep(\GG_T)$, which are likely to only depend on the size of the set $F$. We provide in particular a diagrammatic description of the intertwiner spaces $(u^{\otimes n},u^{\otimes m})$, where $u$ is the fundamental representation of $\GG_T$. The question whether $\Rep(\GG_T)$ only depends on the size of the set $F$ then reduces to a very intricate, purely combinatorial open problem that we present in Remark \ref{rem:indication-mon-equiv}. We also provide numerical evidence pointing towards a positive answer to this problem, at least when $\Delta_T$ is a classical Bruhat--Tits $\tilde{A}_2$-building.

\section{Preliminaries}

\subsection{\boldmath Rigid \texorpdfstring{$C^*$}{C*}-tensor categories}\label{sec.tensor-category}

In this section, we briefly recall a number of facts about rigid $C^*$-tensor categories.
A comprehensive introduction to the subject matter may be found in \cite[Chapter~2]{neshveyev-tuset}.
Throughout, all tensor categories are assumed to be strict and essentially small.
Additionally, we always assume that $C^*$-tensor categories are closed under direct sums and passage to subobjects, unless otherwise specified.

A \textit{rigid $C^*$-tensor category} is a $C^*$-tensor category $\mathcal{C}$ in which every object $\alpha\in\mathcal{C}$ has a conjugate $\bar{\alpha}\in\mathcal{C}$.
We denote the tensor product of $\alpha,\beta\in\mathcal{C}$ by $\alpha\beta$, and the unit object of $\mathcal{C}$ by $\varepsilon$.
Given $\alpha,\beta\in\mathcal{C}$, the normed vector space of morphisms $\alpha\to\beta$ will be denoted by $(\beta,\alpha)$.
The rigidity assumption implies that all these morphism spaces are finite-dimensional.
In particular, every object $\alpha\in\mathcal{C}$ splits into a finite sum of irreducibles, and the \textit{multiplicity} of an irreducible object $\gamma$ in $\alpha$ is defined by $\mult(\gamma,\alpha)=\dim_{\CC}(\gamma,\alpha)$.
The set of isomorphism classes of irreducible objects in $\mathcal{C}$ will be denoted by $\Irr(\mathcal{C})$.
We always work with some fixed choice of representatives for all $\alpha\in\Irr(\mathcal{C})$, and we do not distinguish between irreducible objects and their isomorphism classes.
For all $\alpha\in\Irr(\mathcal{C})$, we identify $(\alpha,\alpha)$ with $\CC$.

Given an object $\alpha\in\mathcal{C}$, the rigidity assumption says that $\alpha$ admits a conjugate object $\bar{\alpha}\in\mathcal{C}$.
This means that there exists a pair of maps $s_\alpha\in(\alpha\bar{\alpha},\varepsilon)$ and $t_\alpha\in(\bar{\alpha}\alpha,\varepsilon)$ such that
\[
(t_\alpha^*\otimes 1)(1\otimes s_\alpha)=1 \qquad \text{and}\qquad (s_\alpha^*\otimes 1)(1\otimes t_\alpha)=1;.
\]
The pair $(s_\alpha,t_\alpha)$ is referred to as a \textit{solution to the conjugate equations}.
A solution to the conjugate equations is \textit{standard} when it additionally satisfies
\begin{equation}\label{eqn:categorical-trace}
s_\alpha^*(T\otimes 1)s_\alpha=t_\alpha^*(1\otimes T)t_\alpha \quad\text{for all}\;\; T \in (\al,\al) \;.
\end{equation}
Conjugate objects and standard solutions are unique up to unitary conjugacy and satisfy various naturality properties with respect to direct sums and tensor products in $\mathcal{C}$ (see \cite[\S~2.2]{neshveyev-tuset}).
The positive quantity $d(\alpha)=s_\alpha^*s_\alpha=t_\alpha^*t_\alpha$ is called the \textit{categorical dimension} of $\alpha$.
Throughout the article, we always consider fixed standard solutions for all irreducibles, and extend those to arbitrary objects by naturality.

Given a standard solution $(s_\alpha,t_\alpha)$, the identity \eqref{eqn:categorical-trace} determines a faithful positive tracial functional on $(\alpha,\alpha)$, given by
\[
\Tr_\alpha(T)=s_\alpha^*(T\otimes 1)s_\alpha=t_\alpha^*(1\otimes T)t_\alpha\;.
\]
This map is referred to as the \textit{categorical trace} on $(\alpha,\alpha)$, and does not depend on the choice of standard solution.
Typically $\Tr_\alpha$ is not normalized, since $\Tr_\alpha(1)=d(\alpha)$.
For any two objects $\alpha,\beta\in\mathcal{C}$, the intertwiner space $(\alpha,\beta)$ comes with a natural inner product induced by the categorical traces, which is given by
\begin{equation}\label{eqn:intertw-innprod}
\langle T, S\rangle=\Tr_\alpha(TS^*)=\Tr_\beta(S^*T)\;.
\end{equation}
The notation $\onb(\alpha,\beta)$ will always refer to an orthonormal basis of $(\alpha,\beta)$ with respect to this inner product.

Standard solutions also induce \textit{Frobenius reciprocity maps} between morphism spaces, given by
\[
\begin{aligned}
            &(\alpha\beta,\gamma) \to (\alpha,\gamma\overline{\beta}): T\mapsto (1\otimes s_\beta^*)(T\otimes 1) \; ,\\
            &(\alpha\beta,\gamma)\to (\beta,\overline{\alpha}\gamma): T\mapsto (t_\alpha^*\otimes 1)(1\otimes T) \; ,
\end{aligned}
\]
where $\alpha,\beta,\gamma\in\Irr(\mathcal{C})$.
These maps are unitary with respect to the inner product defined in~\eqref{eqn:intertw-innprod}.

\subsection{Compact quantum groups}

A \textit{compact quantum group} $\GG$ in the sense of Woronowicz \cite{woron-cqg} consists of a unital $C^*$-algebra $A$, along with a unital $*$-homomorphism $\Delta:A\to A\otimes_{\mathrm{min}} A$ satisfying
\begin{itemize}
        \item co-associativity: $(\Delta\otimes\id)\Delta=(\id\otimes\Delta)\Delta$
        \item density conditions: $\Delta(A)(1\otimes A)$ and $\Delta(A)(A\otimes 1)$ are dense in $A\otimes_{\mathrm{min}} A$.
\end{itemize}
Every compact quantum group $\GG$ admits a unique \textit{Haar state} $h$ on $A$, which is characterized by $(\id\otimes h)\Delta(b)=(h\otimes\id)\Delta(b)=h(b)1$.
If the Haar state is a trace, we say that $\GG$ is of \textit{Kac type}.

A finite-dimensional representation of $\GG$ on $\CC^n$ is a unitary $u\in M_n(\CC)\otimes A$ such that $\Delta(u_{ij})=\sum^n_{k=1} u_{ik}\otimes u_{kj}$.
Given representations $u$ on $\CC^n$ and $v$ on $\CC^m$, a linear map $T:\CC^n\to \CC^m$ \textit{intertwines} $u$ and $v$ if $v(T\otimes 1)=(T\otimes 1)u$.
The category $\Rep(\GG)$ of finite-dimensional representations of $\GG$ with intertwiners as morphisms is a rigid $C^*$-tensor category.

A \textit{fiber functor} on a rigid $C^*$-tensor category $\mathcal{C}$ is a faithful unitary monoidal functor $\mathbf{F}:\mathcal{C}\to\mathrm{Hilb}_f$, where $\mathrm{Hilb}_f$ denotes the rigid $C^*$-tensor category of finite-dimensional Hilbert spaces.
Note that, if $\GG$ is a compact quantum group, the forgetful functor from $\Rep(\GG)$ to $\mathrm{Hilb}_f$ is a fiber functor.
Conversely, Woronowicz' Tannaka--Krein duality theorem \cite{woron-tkdual} shows that one can recover the quantum group from its representation category and a fiber functor.
In fact, this is how many examples of compact quantum groups are constructed.
Given a fiber functor $\mathbf{F}$ on a rigid $C^*$-tensor category $\mathcal{C}$, we say that $\mathbf{F}$ is of \textit{Kac type} whenever $d(\alpha)=\dim_\CC(\mathbf{F}(\alpha))$ for all $\alpha\in\mathcal{C}$.
A quantum group is of Kac type if and only if the associated fiber functor is of Kac type.

\subsection{The tube algebra}\label{sec:tube-algebra}

The representation theory of a rigid $C^*$-tensor category can be conveniently viewed through the lens of a certain class of modules over its \textit{tube algebra}, see \cite{ghosh-jones,psv-cohom}.
The tube algebra construction was first introduced by Ocneanu \cite{ocneanu-chirality} for fusion categories, i.e.\@ $C^*$-tensor categories with finitely many isomorphism classes of irreducible objects.

Let $\mathcal{C}$ be a rigid $C^*$-tensor category. The underlying vector space of the tube algebra $\mathcal{A}$ is defined by the algebraic direct sum
\[
\mathcal{A}=\bigoplus_{i,j,\alpha\in\Irr(\mathcal{C})} (i\alpha,\alpha j)\;.
\]
When writing $V\in (i\alpha,\alpha j)$, we implicitly mean that $V$ is an element of the summand indexed by $i,j,\alpha$ in the tube algebra.

Given $i,j\in\Irr(\mathcal{C})$ and $\alpha\in\mathcal{C}$ arbitrary, we can still associate an element of $\mathcal{A}$ to a morphism $V\in (i\alpha,\alpha j)$ via the map
\begin{equation}\label{eqn:morphism-to-tube-alg}
V\mapsto \sum_{\gamma\in\Irr(\mathcal{C})} d(\gamma)\sum_{W\in\onb(\alpha,\gamma)} (1\otimes W^*)V(W\otimes 1)\;.
\end{equation}
Note that the expression on the right is independent of all basis choices.
Some care should be taken when viewing morphisms as tube algebra elements in this way, since the map in \eqref{eqn:morphism-to-tube-alg} is typically not injective.

The $*$-algebra structure on $\mathcal{A}$ is then defined by
\begin{align*}
        V\cdot W &= \delta_{j,j'}(V\otimes 1)(1 \otimes W)\in (i\alpha\beta,\alpha\beta k) \; ,\\
        V^\# &= (t_\alpha^*\otimes 1 \otimes 1)(1 \otimes V^*\otimes 1)(1 \otimes 1 \otimes s_\alpha)\in (j\overline{\alpha},\overline{\alpha} i) \; ,
\end{align*}
for $V\in(i\alpha,\alpha j)$ and $W\in (j'\beta, \beta k)$.
To avoid confusion with composition of morphisms in $\mathcal{C}$, we systematically denote the multiplication of $V$ and $W$ in $\mathcal{A}$ by $V\cdot W$.

For any irreducible object $i\in\Irr(\mathcal{C})$, the identity in $(i\varepsilon,\varepsilon i)$ becomes a self-adjoint idempotent when viewed as an element of $\mathcal{A}$, which will be denoted by $p_i$.
Observe that $p_i\cdot V\cdot p_j=\delta_{ik}\delta_{jk'}V$ when $V\in (k\alpha,\alpha k')$.
The tube algebra is unital if and only if the number of irreducible objects is finite, but the elements $p_i$ always serve as local units, in the sense that every $V\in\mathcal{A}$ is supported under a finite sum of $p_i$'s.

The corner $p_\varepsilon\cdot \mathcal{A}\cdot p_\varepsilon$ can be identified with the \textit{fusion $*$-algebra} $\CC[\mathcal{C}]$.
This is the direct analogue of the group $*$-algebra of a discrete group.
As a vector space, $\CC[\mathcal{C}]$ is spanned by $\Irr(\mathcal{C})$.
The involution is defined by conjugation, and multiplication by
\[
        \alpha\cdot\beta = \sum_{\gamma\in\Irr(\mathcal{C})} \mult(\gamma,\alpha\beta)\gamma\;.
\]
By mapping $\alpha\in\Irr(\mathcal{C})$ to the identity intertwiner in $(\varepsilon \alpha,\alpha\varepsilon)$, we can consider $\alpha$ as an element of $p_\varepsilon\cdot\mathcal{A}\cdot p_\varepsilon$.

One can consider the representation theory of a rigid $C^*$-tensor category $\mathcal{C}$ from several points of view \cite{pv-repr-subfactors,neshveyev-yamashita,ghosh-jones,psv-cohom}.
Here, we consider \textit{nondegenerate right Hilbert modules} over the tube algebra $\mathcal{A}$.
By \cite[Lemma~3.9]{psv-cohom}, any linear *-representation of $\mathcal{A}$ on an inner product space is automatically by bounded operators, and $\|V\|\leq d(\alpha)$ for all $V\in (i\alpha,\alpha j)$, where $i,j,\alpha\in\Irr(\mathcal{C})$.
A right *-representation $\mathcal{A}$ on a Hilbert space $\mathcal{H}$ is said to be \textit{nondegenerate} when the subspaces $\mathcal{H}\cdot p_i, i\in\Irr(\mathcal{C})$ span a dense subspace of $\mathcal{H}$.

In this framework, the \textit{trivial representation} is defined via the \textit{counit} $\counit:\mathcal{A}\to\CC$, which is given by
\[
        \counit(p_i)=\delta_{i,\varepsilon}\qquad \text{ and } \qquad \counit(\alpha) = d(\alpha)\;,
\]
for all $i,\alpha\in\Irr(\mathcal{C})$.

\subsection{\boldmath Property~(T) for rigid \texorpdfstring{$C^*$}{C*}-tensor categories}

There are several equivalent definitions of property~(T), see \cite{popa-symmenv, pv-repr-subfactors, neshveyev-yamashita, ghosh-jones}.
The quantitative characterization stated below, which is essentially \cite[Proposition~4.22(ii)]{neshveyev-yamashita}, is the most suitable one for our purposes.

\begin{definition}\label{def:invar-property-T}
Let $\mathcal{C}$ be a rigid $C^*$-tensor category with tube algebra $\mathcal{A}$.
Consider a nondegenerate right Hilbert $\mathcal{A}$-module $\mathcal{H}$.
A vector $\xi\in\mathcal{H}$ is \textit{invariant} if $\xi\cdot 1_\alpha = d(\alpha)\xi$ for all $\xi\in\mathcal{H}$.
A net $(\xi_i)_{i\in I}$ in $\mathcal{H}$ is said to be \textit{almost invariant} if $\xi_i\cdot 1_\alpha-d(\alpha)\xi_i\to 0$ for all $\alpha\in\Irr(\mathcal{C})$.

Given a finite set $F\subset\Irr(\mathcal{C})$ and $\varepsilon>0$, a vector $\xi\in\mathcal{H}$ is $(F,\varepsilon)$\textit{-invariant} whenever
\[
\|\xi\cdot 1_\alpha-d(\alpha)\xi\|<d(\alpha)\varepsilon\|\xi\|.
\]
for all $\alpha\in F$.
There exists an almost invariant net of unit vectors in $\mathcal{H}$ if and only if $\mathcal{H}$ admits $(F,\varepsilon)$-invariant vectors for all finite sets $F\subset\Irr(\mathcal{C})$ and all $\varepsilon>0$.

A \textit{Kazhdan pair} for $\mathcal{C}$ is a pair $(F,\varepsilon)$ with $F\subset\Irr(\mathcal{C})$ a finite subset and $\varepsilon>0$ such that any nondegenerate right Hilbert $\mathcal{A}$-module containing an $(F,\varepsilon)$-invariant vector must contain a nonzero invariant vector.
If $\mathcal{C}$ admits a Kazhdan pair, we say that $\mathcal{C}$ has \textit{property~(T)}.
Equivalently, $\mathcal{C}$ has property~(T) if and only if a nondegenerate right Hilbert module with almost invariant vectors admits a nonzero invariant vector.
\end{definition}

\begin{remark}
Given that we work with modules over the full tube algebra $\mathcal{A}$, it might seem unusual to define invariance and almost-invariance in terms of the corner $p_\varepsilon\cdot\mathcal{A}\cdot p_\varepsilon$.
It is however routine to verify the following equivalences for all nondegenerate right Hilbert $\mathcal{A}$-modules $\mathcal{H}$:
\begin{enumerate}[(i)]
            \item A vector $\xi\in\mathcal{H}$ is invariant in the sense of Definition~\ref{def:invar-property-T} if and only if $\xi\cdot V=\counit(V)\xi$ for all $V\in\mathcal{A}$.
            \item A net of unit vectors $(\xi_i)_{i\in I}$ is almost invariant in the sense of Definition~\ref{def:invar-property-T} if and only if $\xi_i\cdot V-\counit(V)\xi_i\to 0$ for all $V\in\mathcal{A}$.
\end{enumerate}
\end{remark}

\subsection{\boldmath Bruhat--Tits \texorpdfstring{$\tilde{A}_2$}{A\_2}-building of a local field}\label{sec:bruhat-tits}

Let $\KK$ be a non-Archimedean local field with valuation $\nu : \KK^\times \recht \ZZ$. We briefly recall the construction of the associated Bruhat--Tits $\tilde{A}_2$-building and refer to \cite[Chapter~9]{ronan} for further details.

Denote by $\cO = \{x \in \KK \mid \nu(x) \geq 0\}$ the ring of integers and fix a generator $\varpi$ for the unique maximal ideal $\{x \in \KK \mid \nu(x) \geq 1\}$ of $\cO$. A lattice in $\KK^3$ is a finitely generated $\cO$-submodule of $\KK^3$ that generates $\KK^3$ as a $\KK$-vector space. Two lattices $L$ and $L'$ are called equivalent if $L=tL'$ for some $t\in\KK^\times$. The vertices of the building Bruhat--Tits $\tilde{A}_2$-building $\Delta_\KK$ are by definition the equivalence classes of lattices. There is an edge between two vertices $x,x' \in \Delta_\KK$ if and only if $x,x'$ admit representatives $L,L'$ such that $\varpi L\subsetneq L'\subsetneq L$.

The natural action of the projective linear group $\PGL(3,\KK)$ on the vertices of $\Delta_\KK$ is transitive and the stabilizer of the equivalence class of $\mathcal{O}^3$ is $\PGL(3,\mathcal{O})$.

\subsection{\boldmath Triangle presentations and groups acting on \texorpdfstring{$\tilde{A}_2$}{A\_2}-buildings}\label{sec.triangle-pres}

In \cite{cmsz1}, groups acting simply transitively on the vertices of a Euclidean building of type $\tilde{A}_2$ were described combinatorially through the following notion of a \emph{triangle presentation}.

\begin{definition}[\cite{cmsz1}]\label{def:triangle-pres}
A \textit{triangle presentation} over a finite \textit{base set} $F$ is a subset $T\subset F\times F\times F$ satisfying the following conditions.
\begin{enumerate}[(i)]
\item $T$ is invariant under cyclic permutations: if $(i,j,k)\in T$, then $(k,i,j)\in T$.
\item For all pairs $(i,j)\in F\times F$, there is at most one $k\in F$ such that $(i,j,k)\in T$. If such a $k$ exists, we denote this relationship by $i\to j$, and we say that $i$ is a \textit{predecessor} of $j$, or that $j$ is a \textit{successor} of $i$.
\item Any two distinct points in $F$ have exactly one common predecessor and exactly one common successor.
\item There exists an integer $q \geq 2$ such that every point in $F$ has exactly $q+1$ predecessors and $q+1$ successors. We call $q$ the \emph{order} of the triangle presentation.
\end{enumerate}
\end{definition}

As the notation suggests, we think of $T$ as a directed graph with vertex set $F$ and edges labeled by vertices. The statement $(i,j,k)\in T$ then translates to the existence of an edge from $i$ to $j$ with label $k$. Since every vertex in $T$ has exactly $q+1$ successors, we find that the number of elements in $T$ is always $(q+1)N$, where $N$ is the cardinality of $F$.

In \cite{cmsz1}, to every triangle presentation $T$ is associated the countable group
\begin{equation}\label{eqn:triangle-group}
\Gamma_T=\langle a_x, x\in F\mid a_i a_j a_k = e \;\;\text{for all}\;\; (i,j,k)\in T\rangle\;.
\end{equation}

By \cite[Theorem 3.4]{cmsz1}, the Cayley graph of $\Gamma_T$ with respect to the generating set $S=\{a_i,a_i^{-1}\mid i\in F\}$ is the $1$-skeleton of a thick $\tilde{A}_2$-building $\Delta_T$. In \cite{cms}, it is shown that the groups $\Gamma_T$ have property~(T). Moreover, it is proved in \cite[\S~5]{zuk-propT} that these groups satisfy the spectral gap criterion of \cite{zuk-propT} with respect to the generating set $S$. This relied on a result of \cite{feit-higman} about spectra of incidence graphs of projective planes, which are associated with a triangle presentation in the following way.

\begin{remark}\label{rem:triag-induced-projective-plane}
The conditions satisfied by a triangle presentation $T$ of order $q$ over the base set $F$ implicitly endow $F$ with the structure of a projective plane $P$ of order $q$.
More precisely, the incidence relation is the following: for all $i,j\in F$ the point with label $j$ lies on the line with label $i$ whenever $i\to j$ in $T$.
Condition~(iii) then states that any two lines intersect exactly once and that any two points lie on a common line. Condition~(iv) says that the plane is of order $q$.
In particular, this implies that $|F|=q^2+q+1$. In all known cases, the order of a projective plane is a prime power.

The definition of a triangle presentation formally makes sense as well for $q=1$, but then the resulting groups $\Gamma_T$ no longer have property~(T) and the set $F$ no longer becomes a projective plane.
\end{remark}

Triangle presentations of orders $2$ and $3$ were fully classified in \cite{cmsz2}. We content ourselves with giving two examples in Table~\ref{tbl:triang-pres-examples} and refer to \cite{cmsz2} for a complete list.

\begin{table}[ht]
    \centering
    \begin{tabular}{c c c|c c c}
        & Example 1 & & & Example 2 &  \\
        \hline
        (1,0,4) & (0,4,1) & (4,1,0) & (0,1,3) & (1,3,0) & (3,0,1)\\
        (2,0,1) & (0,1,2) & (1,2,0) & (1,2,4) & (2,4,1) & (4,1,2)\\
        (1,6,3) & (6,3,1) & (3,1,6) & (2,3,5) & (3,5,2) & (5,2,3)\\
        (2,5,6) & (5,6,2) & (6,2,5) & (3,4,6) & (4,6,3) & (6,3,4)\\
        (4,3,5) & (3,5,4) & (5,4,3) & (4,5,0) & (5,0,4) & (0,4,5)\\
        (3,6,5) & (6,5,3) & (5,3,6) & (5,6,1) & (6,1,5) & (1,5,6)\\
        (4,0,2) & (0,2,4) & (2,4,0) & (6,0,2) & (0,2,6) & (2,6,0)\\
    \end{tabular}
    \caption{Two examples of triangle presentations of order $2$ on $F=\{0,\ldots,6\}$, corresponding to groups~$B.1$ and~$A.1$ in \cite{cmsz2}, respectively.}
    \label{tbl:triang-pres-examples}
\end{table}

For certain triangle presentations $T$, the building $\Delta_T$ is the Bruhat--Tits building of type $\tilde{A}_2$ associated with a non-Archimedean local field $\KK$. By \cite[Theorem 1]{cmsz2}, this holds for all triangle presentations of order $2$. In particular by \cite[Theorem 1]{cmsz2}, the examples in Table \ref{tbl:triang-pres-examples} give rise to the $\tilde{A}_2$-buildings of the local field $\KK = \QQ_2$ of $2$-adic numbers (Example 1) and the local field $\KK = \FF_2((X))$ of formal Laurent series over the field $\FF_2$ with two elements (Example 2). However by \cite[Theorem 2]{cmsz2}, there are numerous triangle presentations $T$ of order $3$ whose associated building $\Delta_T$ is exotic. Finally, in \cite[\S~4]{cmsz1} it is shown that for any prime power $q$, there exists a triangle presentation $T$ of order $q$ such that $\Delta_T$ is Bruhat--Tits.

\section{A \.Zuk-type spectral gap criterion for property~(T)}

\subsection{Statement and notation}

Let $\mathcal{C}$ be a rigid $C^*$-tensor category, and $S\subset\Irr(\mathcal{C})$.
We say that $S$ is \textit{symmetric} if $\alpha\in S$ implies that $\bar{\alpha}\in S$, and that $S$ \textit{generates} $\mathcal{C}$ if every irreducible object in $\mathcal{C}$ appears in some tensor product of elements of $S$.
We always assume that generating sets do not contain the unit object $\varepsilon$.

Before stating the criterion, we first introduce an analogue of the link of a generating set of a discrete group.
Given a finite symmetric set $S$, we define the weight function $\nu$ given by
\begin{equation}\label{eqn:genset-weight-function}
        \nu: S\to (0,+\infty): \alpha\mapsto \sum_{\beta,\gamma\in S} \frac{\mult(\gamma,\bar{\beta}\alpha)}{d(\gamma)}.
\end{equation}
The \textit{vertex space} of the link of $S$ is the direct sum
\begin{equation}\label{eqn:vertex-space}
\mathcal{V}=\bigoplus_{\substack{\alpha\in S \\ i\in\Irr(\mathcal{C})}} (i\alpha,\alpha)\; .
\end{equation}
    Note that this direct sum is finite, since it runs over the irreducible components of $\alpha\bar{\alpha}$ for all $\alpha\in S$.
    Given a vector $\xi\in\mathcal{V}$, we denote the $(i\alpha,\alpha)$-summand by $\xi_{i,\alpha}$.
    Equip $\mathcal{V}$ with a reweighted inner product given by
    \[
        \langle \xi,\eta\rangle_\nu = \sum_{\substack{\alpha\in S\\ i\in\Irr(\mathcal{C})}} \nu(\alpha)\Tr_\alpha(\eta_{i,\alpha}^*\xi_{i,\alpha})\; .
    \]
    To avoid notational inconsistencies, the notation $\onb(i\alpha,\alpha)$ will always refer to an orthonormal basis with respect to the inner product induced by the categorical trace.
    The \textit{edge space} of the link of $S$ is the direct sum
    \[
        \mathcal{E} =\bigoplus_{\substack{\alpha,\beta,\gamma\in S \\ i\in\Irr(\mathcal{C})}} (\bar{\beta}i\alpha,\gamma)\otimes (\gamma,\bar{\beta}\alpha)
    \]
    equipped with the usual inner product that arises from the categorical trace.
    Given a vector $\xi\in\mathcal{E}$ we denote the component in $(\bar{\beta}i\alpha,\gamma)\otimes (\gamma,\bar{\beta}\alpha)$ by $\xi_{\alpha,\beta,\gamma,i}$.

    Next, we introduce \textit{source and target maps} $\partial_\ell, \partial_r:\mathcal{V}\to\mathcal{E}$.
    For $V\in (i\alpha,\alpha)$, define $\tilde{V}\in (\bar{\alpha}i,\bar{\alpha})$ by Frobenius reciprocity:
    \begin{equation*}
        \tilde{V} = V^{\#*} = (1 \ot 1 \otimes s_\alpha^*)(1\otimes V\otimes 1)(t_\alpha\otimes 1)\;.
    \end{equation*}
    The source and target maps are then defined by
    \begin{equation*}
        \begin{aligned}
        ((\partial_\ell)(\xi))_{\alpha,\beta,\gamma,i} &= \sum_{W\in\onb(\bar{\beta}\alpha,\gamma)} (1\otimes\xi_{i,\alpha})W\otimes W^*\;,\\
        ((\partial_r)(\xi))_{\alpha,\beta,\gamma,i} &= \sum_{W\in\onb(\bar{\beta}\alpha,\gamma)} (\tilde{\xi}_{i,\beta}\otimes 1)W\otimes W^*\;.
        \end{aligned}
    \end{equation*}
    Observe that these expressions are independent of the choices of orthonormal bases.

    The \textit{Laplacian} of $S$ is defined by
    \begin{equation}
    \label{eqn:laplacian-def}
    \Delta=\frac{1}{2} (\partial_\ell-\partial_r)^*(\partial_\ell-\partial_r)\;,
    \end{equation}
    where the adjoint is taken with respect to the $\langle\cdot,\cdot\rangle_\nu$-inner product.

    If $\mathcal{C}$ is a discrete group, $\mathcal{V}$ (resp.\@ $\mathcal{E}$) canonically identifies with the space of functions on the vertices (resp.\@ edges) of the link of $S$.
    Under this identification, $\partial_\ell$ and $\partial_r$ correspond to the duals of the source and target maps of the graph.
    It is also easy to see that the constant functions on the link of $S$ are mapped to scalar multiples of the canonical vector $C\in\mathcal{V}$ given by
    \begin{equation}
        \label{eqn:constant-vector}
        C_{i,\alpha}=\delta_{i,\varepsilon}1_\alpha.
    \end{equation}
    Denote the projection onto the span of $C$ by $P_C$.
    We will denote the square of the norm of $C$ by $\kappa$.
    Note that
    \begin{equation*}
        \kappa = \sum_{\alpha\in S} \nu(\alpha) d(\alpha).
    \end{equation*}

    \begin{remark}
    \label{rem:principal-graph-part}
    The Laplacian $\Delta$ restricts to a linear transformation $\Delta_i$ of $\bigoplus_{\alpha\in S} (i\alpha,\alpha)$ for every $i\in\Irr(\mathcal{C})$.
    When $i=\varepsilon$, the direct summand $\Delta_\eps$ can be naturally identified with the graph Laplacian of a weighted graph: the graph has vertex set $S$ and edge weights $w(\alpha,\beta)=\sum_{\gamma\in S} d(\gamma)^{-1}\mult(\gamma,\bar{\beta}\alpha)$. In particular, $\Delta_\eps$ only depends on the fusion rules and the dimension function of $\cC$. In the spectral gap criterion for property~(T) in Theorem \ref{thm:spectral-gap-criterion}, we also need to take into account the direct summands $\Delta_i$ with $i \neq \eps$, which depends on more information than the fusion rules and the dimension function.
    \end{remark}

    The goal of this section is to prove the following spectral gap criterion for property~(T). This is a tensor category version of \.Zuk's criterion for discrete groups proved in \cite[Theorem 1]{zuk-propT}.
    \begin{theorem}
    \label{thm:spectral-gap-criterion}
    Let $\mathcal{C}$ be a rigid $C^*$-tensor category generated by a finite symmetric set $S \subset \Irr(\cC) \setminus \{\eps\}$.
    Define as above the operator $\Delta$ with respect to $S$, and denote its smallest nonzero eigenvalue by $\lambda_1$.
    If zero is a simple eigenvalue of $\Delta$ and $\lambda_1 > 1/2$, then $\mathcal{C}$ has property~(T).
    More precisely, $(S,2-\lambda_1^{-1})$ is a Kazhdan pair for $\mathcal{C}$.
    \end{theorem}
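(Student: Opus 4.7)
The plan is to emulate \.Zuk's classical spectral gap argument \cite{zuk-propT}, translating it into the tube-algebra characterization of property~(T) from Definition~\ref{def:invar-property-T}. We will show that if $\cH$ is a nondegenerate right Hilbert $\cA$-module containing an $(S,\eps)$-invariant unit vector $\xi$ with $\eps<2-\lambda_1^{-1}$, then $\cH$ also contains a nonzero invariant vector. Since each $1_\alpha$, $\alpha\in S$, lies in the fusion algebra $p_\eps\cdot\cA\cdot p_\eps$, the $p_i$-components of $\xi$ for $i\neq\eps$ contribute only orthogonally to the invariance defect, so after a harmless renormalization of $\eps$ we may assume $\xi\in\cH\cdot p_\eps$.

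The key construction attaches to $\xi$ a test vector $F_\xi\in\cV\otimes\cH$, the target being equipped with the tensor of $\langle\cdot,\cdot\rangle_\nu$ with the $\cH$-inner product. For each $\alpha\in S$ and $i\in\Irr(\cC)$, every morphism $V\in(i\alpha,\alpha)$ is viewed as a tube-algebra element via~\eqref{eqn:morphism-to-tube-alg}, so $\xi\cdot V\in\cH\cdot p_i$ is well defined; the $(i,\alpha)$-component of $F_\xi$ is then $\sum_{V\in\onb(i\alpha,\alpha)} V\otimes(\xi\cdot V)$. The main technical step is to establish a Dirichlet-type identity expressing $\tfrac12\|(\partial_\ell-\partial_r)(F_\xi)\|^2$ as a $\nu$-weighted sum of invariance defects $\|\xi\cdot 1_\gamma-d(\gamma)\xi\|^2$ over the $\gamma\in\Irr(\cC)$ appearing in $\bar\beta\alpha$ for $\alpha,\beta\in S$. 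The combinatorial coefficients $\mult(\gamma,\bar\beta\alpha)/d(\gamma)$ that define the weight $\nu$ in~\eqref{eqn:genset-weight-function} should drop out naturally from the expansion, essentially as the price of Frobenius reciprocity. A parallel, simpler computation should yield $\|F_\xi\|_\nu^2=\kappa\|\xi\|^2$ up to terms controlled by $\eps$.

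The spectral gap hypothesis closes the argument. Simplicity of $0$ as an eigenvalue of $\Delta$ with eigenvector $C$ gives $\Delta\geq\lambda_1(1-P_C)$ on $\cV$, hence also on $\cV\otimes\cH$, so the Dirichlet identity yields
\[
\|(1-P_C)F_\xi\|_\nu^2 \;\leq\; \lambda_1^{-1}\,\eps^2\,\kappa
\]
to leading order. Because $P_CF_\xi=C\otimes\zeta$ for a vector $\zeta\in\cH\cdot p_\eps$ obtained as a $\nu$-weighted average of the $d(\alpha)^{-1}\xi\cdot 1_\alpha$, a triangle-inequality comparison with $\|F_\xi\|_\nu^2\approx\kappa$ forces $\|\zeta\|$ to remain bounded below precisely when $\eps<2-\lambda_1^{-1}$; this is where the sharp Kazhdan constant in the statement emerges. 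To upgrade $\zeta$ to a genuine invariant vector, apply the construction to an almost-invariant sequence $(\xi_n)$ in $\cH\cdot p_\eps$: the corresponding $(\zeta_n)$ is asymptotically normalized, and a standard weak-convergence argument (compare \cite[Proposition~4.22]{neshveyev-yamashita}) produces an invariant vector as a limit point.

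The main obstacle will be the diagrammatic derivation of the Dirichlet identity: one has to expand $(\partial_\ell-\partial_r)(F_\xi)$ in the bases $\onb(\bar\beta\alpha,\gamma)$, invoke the conjugate equations and the Frobenius dual $V\mapsto\tilde V$, and match the resulting triple sums on the nose with $\sum_{\alpha,\beta,\gamma}\mult(\gamma,\bar\beta\alpha)/d(\gamma)\cdot\|\xi\cdot 1_\gamma - d(\gamma)\xi\|^2$. A related subtlety is to keep the sharp constant $2-\lambda_1^{-1}$ throughout, rather than a weaker variant produced by imprecise bounding; this in turn hinges on comparing the leading-order terms of $\|F_\xi\|_\nu^2$, $\|P_CF_\xi\|_\nu^2$ and the Dirichlet energy to first order in $\eps^2$, and in carefully separating the $(i=\eps)$-summands of $\cV$ from the rest.
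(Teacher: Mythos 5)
Your high-level strategy — amplify a vector $\xi\in\cH\cdot p_\eps$ to a vector in $\cV\otimes\cH$, then feed it into the spectral gap hypothesis $\Delta\geq\lambda_1(1-P_C)$ — is the right idea and matches the paper's broad outline. But there are several substantive gaps and one genuinely incorrect claim.

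First, the amplification formula. For $V\in(i\alpha,\alpha)$ with $i\neq\eps$, the element $V$ lives in the corner $p_i\cdot\cA\cdot p_\eps$, so $\xi\cdot V=0$ whenever $\xi\in\cH\cdot p_\eps$. Your formula $\sum_V V\otimes(\xi\cdot V)$ therefore produces a vector whose only nonzero component sits in $i=\eps$, which kills most of the vertex space. The correct formula acts by $V^\#\in p_\eps\cdot\cA\cdot p_i$, i.e.\ $\Amp(\xi)_{i,\alpha}=\sum_{W\in\onb(i\alpha,\alpha)}\sqrt{d(i)}\,W\otimes\xi\cdot W^\#$. The $\sqrt{d(i)}$ weight is not cosmetic: it is exactly what makes $\|\Amp(\xi)\|^2=\kappa\|\xi\|^2$ hold (via \cite[Lemma~3.9]{psv-cohom}); without it your claimed estimate ``$\|F_\xi\|^2_\nu\approx\kappa$ up to terms controlled by $\eps$'' is simply false.

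Second, the asserted ``Dirichlet-type identity'' is not what holds. The paper shows that $\kappa^{-1}\langle(\Delta\otimes1)\Amp(\xi),\Amp(\xi)\rangle=\langle\xi\cdot(1-h),\xi\rangle$, i.e.\ the Dirichlet energy is \emph{linear} in the defects $d(\gamma)\xi-\xi\cdot 1_\gamma$, not a $\nu$-weighted sum of the norm-squares $\|\xi\cdot 1_\gamma-d(\gamma)\xi\|^2$. This matters because the complementary identity $\kappa^{-1}\langle((1-P_C)\otimes1)\Amp(\xi),\Amp(\xi)\rangle=\langle\xi\cdot(1-h^2),\xi\rangle$ pairs with the $\Delta$-identity precisely because $1-h^2=(1-h)(1+h)$: feeding $\Delta\geq\lambda_1(1-P_C)$ into these two identities gives the operator inequality $1-h\geq\lambda_1(1-h^2)$ for the averaging element $h$ in the (universal) tube-algebra module, hence by the spectral mapping theorem $\sigma(h)\setminus\{1\}\subset[-1,\lambda_1^{-1}-1]$. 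Your proposed quadratic identity would not fit this factorization and, even if some version could be proved, it would not mesh with the same spectral-gap estimate to produce the sharp constant $2-\lambda_1^{-1}$.

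Third, the route to the Kazhdan constant. You propose a vector-by-vector triangle-inequality estimate followed by a weak-limit argument. The paper avoids this: it first isolates the spectral-gap characterization (that $\cC$ has property~(T) iff $1$ is isolated in $\sigma(h)$, and more quantitatively that $\sigma(h)\subset[-1,1-\eps]\cup\{1\}$ implies $(S,\eps)$ is a Kazhdan pair), then deduces the result directly from $\sigma(h)\setminus\{1\}\subset[-1,\lambda_1^{-1}-1]$. This makes the sharp constant $\eps=1-(\lambda_1^{-1}-1)=2-\lambda_1^{-1}$ drop out immediately, whereas your triangle-inequality comparison would at best give a lower bound on $\|\zeta\|$ under the weaker constraint $\eps<\lambda_1^{1/2}$ (which does not match the claimed constant and is not what the theorem asserts). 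You would in any case still need the spectral-gap lemma to turn ``almost-invariant implies a nonzero invariant vector'' into a Kazhdan pair, and that lemma is missing from your proposal.

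In short: the amplification formula must be corrected to $\sqrt{d(i)}W\otimes\xi\cdot W^\#$; the key identity relating $\Delta$ to $h$ is linear in $1-h$, not quadratic in the defects; and the cleanest way to close the argument — and the one that yields the claimed sharp constant — is to first prove the spectral-gap lemma for $h$ and then apply the spectral mapping theorem, rather than the ad hoc weak-limit argument you sketch.
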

    Observe that, given some generating set $S\subset\Irr(\mathcal{C})$, the matrix $\Delta$ can be computed from finite-dimensional data:
    \begin{itemize}
        \item a basis of the endomorphism space of $\alpha\bar{\alpha}$ for all $\alpha\in S$;
        \item a basis of $(\alpha\beta,\gamma)$ for all $\alpha,\beta,\gamma\in S$.
    \end{itemize}
    Nevertheless, finding such bases in concrete examples can be far from trivial.
    This is already the case for the examples that we consider in Section~\ref{sec:new-examples}.

    As we shall see in the next section, Theorem~\ref{thm:spectral-gap-criterion} allows for a very short and elementary proof of property~(T) for $\Rep(\SU_\mu(3))$, where $\mu \in (-1,1) \setminus \{0\}$. This provides an alternative to the approach of \cite{pv-repr-subfactors,arano-suqn}.

\subsection{Proof of Theorem~\ref{thm:spectral-gap-criterion}}

The following lemma generalizes \cite[Lemma 12.1.8]{brown-ozawa}, characterizing property~(T) in terms of spectral gap of a certain averaging operator.
The proof is almost identical.

\begin{lemma}
    \label{thm:spectral-gap-lemma}
    Let $\mathcal{C}$ be a rigid $C^*$-tensor category generated by a finite symmetric set of irreducibles $S$.
    Denote the tube algebra of $\mathcal{C}$ by $\mathcal{A}$.
    Let $\nu:S\to (0,+\infty)$ be a symmetric function (i.e.\@ $\nu(\alpha)=\nu(\bar{\alpha})$ for all $\alpha\in S$), and define
    \begin{equation}
        \kappa =\sum_{\alpha\in S} \nu(\alpha)d(\alpha)\quad\text{and}\quad h = \frac{1}{\kappa}\sum_{\alpha\in S} \nu(\alpha) 1_\alpha \quad \in p_\varepsilon\cdot\mathcal{A}\cdot p_\varepsilon \numberthis\label{eqn:avg-op-definition}.
    \end{equation}
    Then the following are equivalent:
    \begin{enumerate}[(i)]
        \item $\mathcal{C}$ has property~(T).
        \item In a universal nondegenerate right Hilbert $\mathcal{A}$-module, $1$ is isolated in the spectrum of $h$. More concretely, if $\sigma(h)\subset [-1,1-\varepsilon]\cup \{1\}$ with $\varepsilon>0$, then $(S,\varepsilon)$ is a Kazhdan pair for $\mathcal{C}$.
    \end{enumerate}
    \begin{proof}
        Let $\mathcal{H}$ be a universal nondegenerate right Hilbert $\mathcal{A}$-module.
        By uniform convexity, a vector $\xi\in\mathcal{H}$ is invariant if and only if $\xi\cdot h=\xi$, and a sequence of vectors~$(\xi_n)_{n\in\NN}$ in $\mathcal{H}$ is almost invariant if and only if $\xi_n\cdot h - \xi_n\to 0$. So we immediately get that $\cC$ has property~(T) if and only if $1$ is isolated in the spectrum of $h$.

        Assume that $\sigma(h)\subset[-1,1-\varepsilon]\cup\{1\}$ holds for the universal nondegenerate right Hilbert $\cA$-module and hence, for an arbitrary nondegenerate right Hilbert $\cA$-module $\cH$.
        Let $P$ be the projection onto the fixed vectors of $h$.
        By the spectral mapping theorem, $\varepsilon(1-P)\leq 1-h$.
        Therefore, any $(S,\eps)$-invariant vector $\xi \in \cH$ satisfies
        \begin{align*}
            \|\xi-P\xi\|^2 &= \langle (1-P)\xi,\xi\rangle \leq \frac{1}{\varepsilon}\langle \xi\cdot (1-h), \xi\rangle
            =\frac{1}{\varepsilon\kappa}\sum_{\alpha\in S}\nu(\alpha)\langle d(\alpha)\xi-\xi\cdot 1_\alpha,\xi\rangle\\
            &\leq \frac{1}{\varepsilon\kappa}\|\xi\|\sum_{\alpha\in S} \nu(\alpha)\|d(\alpha)\xi-\xi\cdot 1_\alpha\|
            < \frac{1}{\kappa}\|\xi\|^2\sum_{\alpha\in S} \nu(\alpha)d(\alpha)=\|\xi\|^2\; .
        \end{align*}
        It follows that $P\xi\neq 0$ so that $\mathcal{H}$ admits nonzero invariant vectors. This means that $(S,\eps)$ is a Kazhdan pair.
    \end{proof}
\end{lemma}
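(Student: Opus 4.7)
My plan is to treat $h$ as a self-adjoint contraction on every nondegenerate right Hilbert $\mathcal{A}$-module and translate between spectral data of $h$ and the invariance properties defining property~(T). As a preliminary step I would record the structural facts about $h$: symmetry of $\nu$ together with $1_\alpha^\# = 1_{\bar\alpha}$ (which follows directly from the definition of the $\#$-operation and the conjugate equations) forces $h^\# = h$, and the identity $\kappa^{-1}\sum_\alpha \nu(\alpha)d(\alpha)=1$ exhibits $h$ as a convex combination of the elements $d(\alpha)^{-1}1_\alpha$, each acting as an operator of norm at most $1$ by the general bound $\|V\|\leq d(\alpha)$ for $V\in(i\alpha,\alpha j)$ recalled from \cite[Lemma 3.9]{psv-cohom}. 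Thus $h$ acts as a self-adjoint contraction in any $*$-representation and its spectrum is a well-defined subset of $[-1,1]$.

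For the qualitative equivalence (i)$\Leftrightarrow$(ii), the forward direction of each clause is immediate. For the reverse, I would apply uniform convexity of Hilbert space to the convex-combination expression of $h$: if $\xi\cdot h=\xi$ with $\|\xi\|=1$ then each $d(\alpha)^{-1}\xi\cdot 1_\alpha$ must coincide with $\xi$, giving invariance under every $1_\alpha$, and a standard perturbative version of this argument handles almost-invariant nets. Since any nondegenerate right Hilbert $\mathcal{A}$-module embeds as a subrepresentation of a multiple of the universal one, isolation of $1$ in the spectrum of $h$ on the universal module is equivalent to the corresponding statement for arbitrary modules.

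For the quantitative Kazhdan-pair refinement, the spectral theorem converts $\sigma(h)\subset[-1,1-\varepsilon]\cup\{1\}$ into the operator inequality $\varepsilon(1-P)\leq 1-h$, with $P$ the projection onto $h$-fixed vectors. For an $(S,\varepsilon)$-invariant unit vector $\xi$ I would estimate
\[
\|\xi-P\xi\|^2\leq \varepsilon^{-1}\langle\xi\cdot(1-h),\xi\rangle = (\varepsilon\kappa)^{-1}\sum_{\alpha\in S} \nu(\alpha)\bigl(d(\alpha)\|\xi\|^2 - \Re\langle\xi\cdot 1_\alpha,\xi\rangle\bigr),
\]
where passage to real parts is legitimate because $h$ is self-adjoint. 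Applying Cauchy--Schwarz termwise and inserting the hypothesis $\|\xi\cdot 1_\alpha - d(\alpha)\xi\|<d(\alpha)\varepsilon\|\xi\|$ produces factors of $d(\alpha)\varepsilon$, which average with weights $\nu(\alpha)/\kappa$ to exactly $\varepsilon\|\xi\|^2$ thanks to the definition of $\kappa$; this gives the strict bound $\|\xi-P\xi\|^2<\|\xi\|^2$, hence $P\xi\neq 0$. The main subtlety — not really an obstacle — is the uniform-convexity step of the previous paragraph, where one has to verify carefully that $h$ is presented as a genuine convex combination of contractions so that the equality case of the triangle inequality in Hilbert space can be invoked; everything else reduces to routine estimates once the self-adjointness and contractivity of $h$ have been established.
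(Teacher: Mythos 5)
Your proposal is correct and follows essentially the same route as the paper: uniform convexity of the Hilbert norm applied to the convex-combination form of $h$ for the qualitative equivalence, then the spectral-theorem inequality $\varepsilon(1-P)\leq 1-h$ together with a termwise Cauchy--Schwarz estimate for the Kazhdan-pair bound. Your preliminary remarks on $h^\#=h$, contractivity, and passing to real parts make explicit some details the paper leaves implicit, but the underlying argument is identical.
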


    We are now ready to prove Theorem~\ref{thm:spectral-gap-criterion}.

\begin{proof}[Proof of Theorem~\ref{thm:spectral-gap-criterion}]
    Take $\nu$ as in \eqref{eqn:genset-weight-function}, and define $h$ using the formula \eqref{eqn:avg-op-definition}.
    Define $\Delta$ as in~\eqref{eqn:laplacian-def}.
    By Lemma \ref{thm:spectral-gap-lemma}, it is sufficient to prove that the spectral gap property of $\Delta$ implies that $h\in p_\varepsilon\cdot\mathcal{A}\cdot p_\varepsilon$ has spectral gap.
    To relate $\Delta$ and $h$, we define an amplification operation on $\mathcal{A}$-modules.
    Given a nondegenerate right Hilbert $\mathcal{A}$-module $\mathcal{H}$ and a vector $\xi\in\mathcal{H}\cdot p_\varepsilon$, we can amplify $\xi$ to a vector $\Amp(\xi)\in\mathcal{V}\otimes\mathcal{H}$ by setting
    \begin{equation*}
    \Amp(\xi)_{i,\alpha} = \sum_{W\in\onb(i\alpha,\alpha)} \sqrt{d(i)} W\otimes\xi\cdot W^\#.
    \end{equation*}
    In particular, if $\xi$ is invariant we find that $\Amp(\xi)=C\otimes\xi$, with $C$ as in \eqref{eqn:constant-vector}.
    With $h$ defined as in Lemma~\ref{thm:spectral-gap-lemma}, observe that
    \begin{equation*}
    \begin{split}
        & \langle (P_C\otimes 1) \Amp(\xi),\Amp(\xi)\rangle = \sum_{\alpha,\beta\in S} \frac{1}{d(\alpha\beta)}\langle P_C\alpha,P_C\beta\rangle_\nu\langle\xi\cdot 1_{\bar{\alpha}},\xi\cdot 1_{\bar{\beta}}\rangle\\
        &=\sum_{\alpha,\beta\in S} \frac{1}{d(\alpha\beta)}\left\langle \frac{\nu(\alpha)d(\alpha)}{\kappa}C, \frac{\nu(\beta)d(\beta)}{\kappa}C\right\rangle_\nu \langle \xi\cdot 1_{\bar{\alpha}},\xi\cdot 1_{\bar{\beta}}\rangle
        =\sum_{\alpha,\beta\in S} \frac{\nu(\alpha)\nu(\beta)}{\kappa}\langle \xi\cdot 1_{\bar{\alpha}},\xi\cdot 1_{\bar{\beta}}\rangle \\ & =\kappa\|\xi\cdot h\|^2\;,
    \end{split}
    \end{equation*}
    for all $\xi\in\mathcal{H}\cdot p_\varepsilon$.
    By \cite[Lemma~3.9]{psv-cohom}, we also have that  $\|\Amp(\xi)\|^2=\kappa\|\xi\|^2$, so
    \begin{equation}
    \label{eqn:constant-complement-identity}
    \frac{1}{\kappa}\langle ((1-P_C)\otimes 1)\Amp(\xi),\Amp(\xi)\rangle = \|\xi\|^2-\|\xi\cdot h\|^2=\langle \xi\cdot (1-h^2),\xi\rangle\ .
    \end{equation}
    Under the assumptions of Theorem~\ref{thm:spectral-gap-criterion}, the left-hand side of this identity can be estimated in terms of $\langle (\Delta\otimes 1)\Amp(\xi),\Amp(\xi)\rangle$.
    To aid in the computation of these matrix coefficients, we introduce an auxiliary family of sesquilinear forms $M^{\bar{\beta}\alpha}_\gamma$ on $\mathcal{V}$, indexed by $\alpha,\beta,\gamma\in S$ and given by
    \[
        M^{\bar{\beta}\alpha}_\gamma(\xi,\eta)=\sum_{\substack{i\in\Irr(\mathcal{C})\\U\in\onb(\bar{\beta}\alpha,\gamma)}} \Tr_{\gamma}(U^*(\eta_{i,\beta}^\#\otimes 1)(1\otimes\xi_{i,\alpha})U)
    \]
    These play the role of the Markov operator $M=1-\Delta$, since an easy computation yields that
    \begin{equation}
    \label{eqn:laplacian-markov-matrix}
    \langle (\partial_\ell-\partial_r)(\xi),(\partial_\ell-\partial_r)(\eta)\rangle = 2\langle\xi,\eta\rangle_\nu - \sum_{\alpha,\beta,\gamma\in S}\left[M^{\bar{\beta}\alpha}_\gamma(\xi,\eta)+\overline{M^{\bar{\alpha}\beta}_\gamma(\eta,\xi)}\right]
    \end{equation}
    for all $\xi,\eta\in\mathcal{V}$.
    Before breaking down how the Laplacian interacts with the amplification procedure described above, we observe that
    \begin{align*}
    \sum_{W\in\onb(i\beta,\beta)}\overline{M^{\bar{\alpha}\beta}_\gamma(W,V)} W
    &=\sum_{U\in\onb(\bar{\alpha}\beta,\gamma)} (s_\alpha^*\otimes 1)(1\otimes\tilde{V}\otimes 1)(1\otimes UU^*)(s_\alpha\otimes 1) \\
    &=d(\gamma)^{-1}(1\otimes s_\alpha^*\otimes 1)(V\otimes P^{\bar{\alpha}\beta}_\gamma)(s_\alpha\otimes 1)\;.
    \end{align*}
    for all $\alpha,\beta\in S$, $i\in\Irr(\mathcal{C})$ and $V\in (i\alpha,\alpha)$.
    Here, $P^{\bar{\alpha}\beta}_\gamma\in(\bar{\alpha}\beta,\bar{\alpha}\beta)$ denotes the orthogonal projection of $\bar{\alpha}\beta$ onto the largest subobject isomorphic to a direct sum of copies of $\gamma$.

    This leads to
    \begin{equation*}
    \begin{split}
        & \sum_{\substack{i\in\Irr(\mathcal{C}) \\ V\in\onb(i\alpha,\alpha) \\ W\in\onb(i\beta,\beta)}} \ d(i)\overline{M^{\bar{\alpha}\beta}_\gamma(W,V)} (V^\#\otimes 1)(1\otimes W) \\
        &=d(\gamma)^{-1} \sum_{\substack{i\in\Irr(\mathcal{C}) \\ Z\in\onb(\alpha\bar{\alpha},i)}} d(i)(t_\alpha^*\otimes 1 \ot 1)(1\otimes ZZ^*\otimes 1) (1 \ot 1 \otimes P^{\bar{\alpha}\beta}_\gamma)(1\otimes s_\alpha\otimes 1) = d(\gamma)^{-1} P^{\bar{\alpha}\beta}_\gamma\;,
    \end{split}
    \end{equation*}
    for $\alpha,\beta,\gamma\in S$. In $p_\varepsilon\cdot\mathcal{A}\cdot p_\varepsilon$, we obtain the following identity.
    \[
        \sum_{\substack{i\in\Irr(\mathcal{C}) \\ V\in\onb(i\alpha,\alpha) \\ W\in\onb(i\beta,\beta)}} d(i)\overline{M^{\bar{\alpha}\beta}_\gamma(W,V)} V^\#\cdot W= \frac{\mult(\gamma,\bar{\alpha}\beta)}{d(\gamma)}1_\gamma \;.
    \]
    By symmetry, we also get that the following holds in $p_\varepsilon\cdot\mathcal{A}\cdot p_\varepsilon$.
    \[
        \sum_{\substack{i\in\Irr(\mathcal{C}) \\ V\in\onb(i\alpha,\alpha) \\ W\in\onb(i\beta,\beta)}} d(i)M^{\bar{\beta}\alpha}_\gamma(V,W) V^\#\cdot W =\frac{\mult(\bar{\gamma},\bar{\alpha}\beta)}{d(\gamma)}1_{\bar{\gamma}}\;.
    \]
    Then, for $\xi,\eta\in\mathcal{H}$, we apply \eqref{eqn:laplacian-markov-matrix} to obtain
    \begin{align*}
    \langle (\Delta\otimes 1)\Amp(\xi),\Amp(\eta)\rangle &=\kappa\langle\xi,\eta\rangle - \sum_{\alpha,\beta,\gamma\in S} \frac{\mult(\gamma,\bar{\beta}\alpha)}{d(\gamma)}\langle\xi\cdot 1_\gamma,\eta\rangle\\
    &=\kappa\langle\xi,\eta\rangle - \left\langle\xi\cdot \left(\sum_{\gamma\in S} \nu(\gamma)1_\gamma\right) ,\eta\right\rangle=\kappa\left(\langle\xi\cdot (1-h), \eta\rangle\right).
    \end{align*}
    In particular, when $\xi=\eta$, we find that
    \begin{equation}
    \label{eqn:laplacian-amplification-identity}
    \frac{1}{\kappa}\langle (\Delta\otimes 1)\Amp(\xi),\Amp(\xi)\rangle = \langle \xi\cdot (1-h),\xi\rangle\; .
    \end{equation}

    We now show that $\sigma(h)\subset [-1,\lambda_1^{-1}-1] \cup \{1\}$.
    Since zero is a simple eigenvalue of $\Delta$, we have that $\Delta\geq\lambda_1(1-P_C)$.
    By \eqref{eqn:constant-complement-identity} and \eqref{eqn:laplacian-amplification-identity}, this implies that
    \begin{equation*}
        \lambda_1\langle \xi\cdot (1-h^2),\xi\rangle = \frac{1}{\kappa}\langle (\lambda_1(1-P_C)\otimes 1)\Amp(\xi),\Amp(\xi)\rangle \leq \frac{1}{\kappa}\langle (\Delta\otimes 1)\Amp(\xi),\Amp(\xi)\rangle =\langle \xi\cdot(1-h),\xi\rangle
    \end{equation*}
    for all $\xi\in\mathcal{H}$. By the spectral mapping theorem, $\sigma(h)\setminus\{1\}\subset [-1,\lambda_1^{-1}-1]$.
    The theorem now follows by Lemma~\ref{thm:spectral-gap-lemma}, since $\lambda_1^{-1}-1<1$.
    \end{proof}

\section{\boldmath A new proof of property~(T) for \texorpdfstring{$\Rep(\SU_\mu(3))$}{Rep(SU\_q(3))}}

    \begin{proposition}\label{prop:apply-to-su-q-3}
    Fix $\mu\in (-1,1)\setminus\{0\}$ and denote the fundamental representation of $\SU_\mu(3)$ by $u$.
    Then $(\{u,\bar{u}\},\varepsilon_\mu)$ is a Kazhdan pair for $\Rep(\SU_\mu(3))$, with the Kazhdan constant $\varepsilon_\mu$ given by
    \[
        \varepsilon_\mu= \frac{|\mu|+|\mu|^{-1}-2}{|\mu|+|\mu|^{-1}-1} \;.
    \]
    \end{proposition}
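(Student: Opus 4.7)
The plan is to verify the hypothesis of Theorem~\ref{thm:spectral-gap-criterion} for the symmetric generating set $S = \{u, \bar u\}$, which evidently generates $\Rep(\SU_\mu(3))$. Set $[n] := (|\mu|^n - |\mu|^{-n})/(|\mu| - |\mu|^{-1})$, so that $d(u) = d(\bar u) = [3]$. A direct rearrangement of the target Kazhdan constant gives $\varepsilon_\mu = ([2]-2)/([2]-1) = 2 - \lambda_1^{-1}$ for $\lambda_1 = 1 - 1/[2]$. Since $\mu \in (-1,1)\setminus\{0\}$ forces $[2] > 2$, this candidate $\lambda_1$ is automatically $> 1/2$, so it suffices to show that $0$ is a simple eigenvalue of $\Delta$ and that the next eigenvalue equals $1 - 1/[2]$.

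The first step is a finite-dimensional reduction. The fusion rules of $\Rep(\SU_\mu(3))$ coincide with those of classical $\SU(3)$; in particular $u \otimes \bar u = \eps \oplus \mathrm{ad}$ and $u \otimes u = \bar u \oplus \mathrm{Sym}^2_\mu u$. A short calculation from~\eqref{eqn:genset-weight-function} then yields $\nu(u) = \nu(\bar u) = 1/[3]$. By Frobenius reciprocity $(i\alpha, \alpha) \cong (i, \alpha\bar\alpha)$ for $\alpha \in S$, so the only $i \in \Irr(\cC)$ contributing to $\cV$ are $i = \eps$ and $i = \mathrm{ad}$. Hence $\cV$ is $4$-dimensional and splits as an orthogonal sum $\cV_\eps \oplus \cV_{\mathrm{ad}}$ with each summand $2$-dimensional, and the Laplacian decomposes correspondingly as $\Delta = \Delta_\eps \oplus \Delta_{\mathrm{ad}}$.

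By Remark~\ref{rem:principal-graph-part}, $\Delta_\eps$ is the weighted graph Laplacian of the $2$-vertex link graph on $S$ with unique edge $\{u, \bar u\}$ of weight $1/[3]$. Unwinding~\eqref{eqn:laplacian-markov-matrix} on a generic $\xi = a\,1_u + b\,1_{\bar u} \in \cV_\eps$, and using $\nu(\al)d(\al) = 1$ together with the normalization $U^*U = d(\gamma)^{-1} 1_\gamma$ for unit $U \in (\bar\be\al,\gamma)$, one obtains $\langle \Delta_\eps \xi, \xi\rangle_\nu = |a - b|^2$. Thus $\Delta_\eps$ has spectrum $\{0, 2\}$, with $\ker \Delta_\eps$ spanned precisely by the constant vector $C$ of~\eqref{eqn:constant-vector}.

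The core of the argument is then the analysis of $\Delta_{\mathrm{ad}}$. Fix standard solutions for $u$ and $\bar u$ and a unit vector $w \in (\mathrm{ad}, u\bar u)$; applying Frobenius reciprocity to $w$ produces natural unit vectors $e_u \in (\mathrm{ad}\, u, u)$ and $e_{\bar u} \in (\mathrm{ad}\, \bar u, \bar u)$ spanning $\cV_{\mathrm{ad}}$. The $u \leftrightarrow \bar u$ symmetry built into the setup forces the matrix of $\Delta_{\mathrm{ad}}$ in this basis to have equal diagonal entries $a$ and conjugate off-diagonal entries $b, \bar b$, so its eigenvalues take the form $a \pm |b|$. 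To determine $a$ and $b$, the strategy is to expand $\langle(\partial_\ell - \partial_r)e_\al, (\partial_\ell - \partial_r)e_\be\rangle$ via~\eqref{eqn:laplacian-markov-matrix} in fixed orthonormal bases of $(\bar\gamma\delta, \epsilon)$ for $\gamma, \delta, \epsilon \in S$; this reduces the problem to a finite collection of explicit $6j$-type contractions in $\Rep(\SU_\mu(3))$, computable from the quantum Clebsch--Gordan decompositions of $\mathrm{ad} \otimes u$ and $\mathrm{ad} \otimes \bar u$. The expected outcome is that the smaller eigenvalue of $\Delta_{\mathrm{ad}}$ equals $1 - 1/[2]$; being strictly less than $2$, it is $\lambda_1$, and Theorem~\ref{thm:spectral-gap-criterion} delivers the Kazhdan pair $(S, 2 - \lambda_1^{-1}) = (S, \varepsilon_\mu)$. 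The main obstacle is precisely this last $q$-dependent calculation: conceptually routine, but requiring careful bookkeeping of standard solutions and $q$-deformed Clebsch--Gordan coefficients in rank $2$.
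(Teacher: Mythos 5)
Your outline correctly identifies the approach the paper takes: apply Theorem~\ref{thm:spectral-gap-criterion} to $S=\{u,\bar u\}$, compute $\nu(u)=\nu(\bar u)=d(u)^{-1}$, split $\cV$ into a $2$-dimensional $\eps$-block and a $2$-dimensional block indexed by the adjoint $u_{11}$, observe that $\Delta_\eps$ has spectrum $\{0,2\}$, and observe that the symmetry $u\leftrightarrow\bar u$ forces $\Delta_{u_{11}}$ to be of the form $\begin{pmatrix}a & -b \\ -b & a\end{pmatrix}$. Working backwards from the stated Kazhdan constant to guess $\lambda_1 = 1-1/(|\mu|+|\mu|^{-1})$ is a sensible consistency check. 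But at the decisive step you stop: you write that the off-diagonal entry ``is expected'' to give the smaller eigenvalue $1-1/[2]$, that it reduces to ``$6j$-type contractions,'' and that it ``requires careful bookkeeping.'' That is precisely the content of the proposition, and you have not carried it out. As written, the proposal asserts the answer rather than deriving it, so there is a genuine gap.

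The paper's proof fills exactly this gap by an explicit computation. It pins down the number $\zeta$ (your $b$) using concrete data: the invariant vector $E\in(u^{\ot 3},\eps)$, the projection $P=YY^*\in(uu,uu)$ onto the copy of $\bar u$, the Hecke-type relation
\[
(1\otimes P)(P\otimes 1)(1\otimes P) = \Bigl(\frac{1}{\mu+\mu^{-1}}\Bigr)^2\bigl((1\otimes P)-L\bigr)+L\,,
\]
and Frobenius-reciprocity isometries $Z_u\in(u_{11}u,u)$, $Z_{\bar u}\in(u_{11}\bar u,\bar u)$. A short computation then gives $(Z_{\bar u}^\#\otimes 1)(1\otimes Z_u)=\frac{\mu^{-1}+\mu}{d(u)^2-1}(d(u)YY^*-1)$, whose trace yields $\zeta=\frac{1}{\mu+\mu^{-1}}$, and hence the claimed spectrum $\{0,\;1\mp\zeta,\;2\}$. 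Without some version of this calculation (or an alternative derivation of the same off-diagonal coefficient), the proof is incomplete: your diagonal entry $a=1$ is correct by general normalization, but there is no a priori reason that $|b|=1/(|\mu|+|\mu|^{-1})$ other than that this is what you want to be true.
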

    \begin{proof}
    We first recall from \cite{woron-tkdual} a number of facts about $\SU_\mu(3)$.
    The fundamental representation $u$ of $\SU_\mu(3)$ is three-dimensional. Up to scalar multiples,
    there is a unique invariant vector in $u^{\otimes 3}$, which we denote by $E$.
    \begin{equation*}
        E=e_{123}-\mu e_{132}-\mu e_{213}+\mu^2 e_{231}+\mu^2e_{312} - \mu^3 e_{321}\in (\CC^3)^{\otimes 3},
    \end{equation*}
    where $e_{ijk}$ denotes the vector $e_i\otimes e_j\otimes e_k$.
    By definition, all intertwiners between tensor powers of the fundamental representation $u$ are given by taking linear combinations of products operators of the form $1^{\otimes a}\otimes E\otimes 1^{\otimes b}$ and their adjoints. It is easy to see that $E$ satisfies the relations
    \begin{equation*}
        \begin{aligned}
            (E^*\otimes 1)(1\otimes E)&=(1\otimes E^*)(E\otimes 1)=\mu^2(1+\mu^2)1,\\
            (E^*\otimes 1 \ot 1)(1 \ot 1 \otimes E)&=(1 \ot 1 \otimes E^*)(E\otimes 1 \ot 1).
        \end{aligned}
    \end{equation*}
    The map
    $$P=\mu^{-2}(1+\mu^2)^{-1}(E^*\otimes 1 \ot 1)(1 \ot 1 \otimes E) \in (uu,uu)$$
    is a projection, and its image can be identified with the conjugate representation $\bar{u}$ of $u$.
    We denote the isometric inclusion of $\bar{u}$ into $uu$ by~$Y$.
    The following two maps then solve the conjugate equations for $u$ and $\bar{u}$:
    \begin{equation*}
        s_u: \varepsilon\to u\bar{u}: \mu^{-1}(1+\mu^2)^{-1/2} (1\otimes Y^*)E\;, \quad
        t_u: \varepsilon\to \bar{u}u: \mu^{-1}(1+\mu^2)^{-1/2} (Y^*\otimes 1)E\;.
    \end{equation*}
    One also checks that
    \begin{equation}
    \label{eqn:suq3-hecke-relation}
        (1\otimes P)(P\otimes 1)(1\otimes P) = \left(\frac{1}{\mu+\mu^{-1}}\right)^2((1\otimes P)-L)+L\,
    \end{equation}
    where $L$ is the projection onto the span of $E$.

    We now verify that $\SU_\mu(3)$ satisfies the conditions of Theorem~\ref{thm:spectral-gap-criterion} when $\mu\neq 1$, by computing the Laplacian with respect to the generating set $S=\{u,\bar{u}\}$.
    Since $\SU_\mu(3)$ has the same fusion rules as $\SU(3)$, the irreducible representations of $\SU_\mu(3)$ are naturally labeled by Young diagrams with columns of at most height two.
    We use the symbol $u_{ab}$ to denote the irreducible representation of $\SU_\mu(3)$ associated with the Young diagram with~$a$ columns of height one, and~$b$ columns of height two.
    This means that $u=u_{10}$ and $\bar{u}=u_{01}$.
    The tensor products of the form $\alpha\beta$ with $\alpha,\beta\in\{u,\bar{u}\}$ decompose into irreducibles as follows:
    \begin{equation*}
        uu =u_{20}\oplus \bar{u}\;,\qquad
        \bar{u}\bar{u} =u_{02}\oplus u\;,\qquad
        u\bar{u} =\bar{u}u=\varepsilon\oplus u_{11}\;.
    \end{equation*}
    Hence, the vertex space $\mathcal{V}=(u,u)\oplus (\bar{u},\bar{u})\oplus (u_{11}u,u)\oplus (u_{11}\bar{u},\bar{u})$ is four-dimensional.
    We also get that $\nu(u)=\nu(\bar{u})=d(u)^{-1}=(\mu^2+1+\mu^{-2})^{-1}$. So choosing a basis of isometries gives an orthonormal basis of $\mathcal{V}$ with respect to the $\langle \cdot,\cdot\rangle_\nu$-inner product.

    To proceed, we therefore have to compute explicit isometries in $(u_{11}u,u)$ and $(u_{11}\bar{u},\bar{u})$.
    Denote the inclusion map of $u_{11}$ into $u\bar{u}$ by $I_{u\bar{u}}$.
    Using \eqref{eqn:suq3-hecke-relation}, we find that
    \[
        I_{\bar{u}u} = (\mu^{-1}+\mu)(Y^*\otimes 1)(1\otimes Y)I_{u\bar{u}}
    \]
    is an isometry from $u_{11}$ into $\bar{u}u$.
    By Frobenius reciprocity, the following maps are also isometries:
    \begin{align*}
        Z_{u}&=\sqrt{\frac{d(u)}{d(u)^2-1}}(I_{u\bar{u}}^*\otimes 1)(1\otimes t_u)\in (u_{11}u,u),\\
        Z_{\bar{u}}&=\sqrt{\frac{d(u)}{d(u)^2-1}}(I_{\bar{u}u}^*\otimes 1)(1\otimes s_u)\in (u_{11}\bar{u},\bar{u}).
    \end{align*}
    With respect to the basis $1_u,1_{\bar{u}},Z_{u},Z_{\bar{u}}$ of $\mathcal{V}$ the Laplacian $\Delta$ is given by the matrix
    \[
        \Delta=\left(\begin{matrix}
        1 & -1 & 0 & 0 \\
        -1 & 1 & 0 & 0\\
        0 & 0 & 1 & -\zeta\\
        0 & 0 & -\zeta & 1
        \end{matrix}\right)\;.
    \]
    with $\zeta=d(u)^{-1}\Tr_{\bar{u}}(Y^*(Z_{\bar{u}}^\#\otimes 1)(1\otimes Z_u)Y)$.
    To conclude the computation, note that
    \[
        (Z_{\bar{u}}^\#\otimes 1)(1\otimes Z_{u})=\frac{1}{d(u)^2-1}(\mu^{-1}+\mu)(d(u)YY^*- 1\otimes 1)\;,
    \]
    from which we obtain that
    \[
        \zeta=\frac{\mu^{-1}+\mu}{d(u)+1}=\frac{\mu^{-1}+\mu}{\mu^{-2}+2+\mu^2}=\frac{1}{\mu^{-1}+\mu}\; .
    \]
    Hence, the spectrum of $\Delta$ is given by
    \[
        \sigma(\Delta)=\left\{0,1-\frac{1}{\mu^{-1}+\mu}, 1+\frac{1}{\mu^{-1}+\mu},2\right\}\; .
    \]
    So, the smallest positive eigenvalue $\lambda_1$ of $\Delta$ equals $1 - (|\mu|+|\mu|^{-1})^{-1}$. Since $0 < |\mu| < 1$, we get that $\lambda_1 > 1/2$. The result then follows from Theorem \ref{thm:spectral-gap-criterion}.
    \end{proof}

\section{Discrete quantum groups with property~(T)}\label{sec:new-examples}


Recall from the introduction that property~(T) for discrete quantum groups $\hat{\GG}$ was introduced in \cite{fima-prop-T}. It is related in the following way to property~(T) of the representation category $\Rep(\GG)$. For arbitrary compact quantum groups $\GG$, by \cite[Proposition 6.3]{pv-repr-subfactors}, property~(T) of $\Rep(\GG)$ is equivalent with the central property~(T) for $\hat{\GG}$ introduced in \cite{arano-suqn}. The ordinary property~(T) of $\hat{\GG}$ is a strictly stronger notion, because by \cite[Proposition 3.2]{fima-prop-T}, it forces $\GG$ to be of Kac type. For Kac type discrete quantum groups $\hat{\GG}$, by \cite[Proposition 8.7]{arano-drinfeld-unitary-duals}, central property~(T) is equivalent with property~(T) and thus also equivalent with property~(T) of $\Rep(\GG)$. By \cite[Theorem 6.8]{arano-delaat-wahl}, in the Kac case, we moreover have equivalence with property~(T) of the tracial von Neumann algebra $L^\infty(\GG)$.

\subsection{Discrete quantum groups given by a triangle presentation}

Recall from Section \ref{sec.triangle-pres} the notion of a triangle presentation $T$ and the associated group $\Gamma_T$ whose Cayley graph $\Delta_T$ is an $\tilde{A}_2$-building.

\begin{definition}\label{def:G-T}
Let $T$ be a triangle presentation over a base set $F$ in the sense of Definition \ref{def:triangle-pres}. We define $C(\GG_T)$ as the universal unital $C^*$-algebra generated by elements $u_{ij}, i,j\in F$ such that $u=(u_{ij})_{i,j\in F}$ is a unitary element of $B(\CC^F)\otimes C(\GG_T)$ and
\begin{equation}\label{eqn:woron-twisted-unimodularity}
        \sum_{(a,b,c)\in T} u_{ia} u_{jb} u_{kc} = \begin{cases}
            1 & \;\;\text{if}\;\; (i,j,k) \in T \; ,\\
            0 & \;\;\text{if}\;\; (i,j,k) \notin T \; ,
        \end{cases}
\end{equation}
for all $i,j,k\in F$. We uniquely define the unital $*$-homomorphism $\Delta : C(\GG_T) \recht C(\GG_T) \ot C(\GG_T)$ so that $\Delta(u_{ij}) = \sum_k u_{ik} \ot u_{kj}$.
\end{definition}

In Lemma \ref{lem:compact-quantum} below, we prove that the $*$-homomorphism $\Delta$ is well defined and turns $\GG_T$ into a compact quantum group of Kac type. Then, $u$ is a unitary representation of $\GG_T$, which we call the \textit{fundamental representation}.

The following is the main result of this section. The proofs of parts (a) and (b) are given in Sections~\ref{sec:triag-pres-laplacian} and~\ref{sec:triag-pres-building}, respectively.

\begin{theorem}\label{thm:repgt-main-theorem}
Let $T$ be a triangle presentation of order $q\geq 2$.
Denote the fundamental representation of $\GG_T$ by $u$.
Then the following hold:
\begin{enumerate}[(a)]
        \item If $\dim(u\bar{u},u\bar{u})=3$, then $\Rep(\GG_T)$ and $\widehat{\GG_T}$ have property~(T).
        \item If the building $\Delta_T$ is isomorphic to the Bruhat--Tits $\tilde{A}_2$-building of a non-Archimedean commutative local field $\KK$, then $\dim(u\bar{u},u\bar{u})=3$ and $\Rep(\GG_T)$ admits irreducible representations of arbitrarily high dimension.
\end{enumerate}
\end{theorem}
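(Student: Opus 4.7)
The natural generating set is $S = \{u, \bar u\}$, so the plan is to apply the spectral gap criterion of Theorem~\ref{thm:spectral-gap-criterion} to this $S$. The hypothesis $\dim(u\bar u, u\bar u) = 3$ forces $u\bar u \cong \eps \oplus \pi_1 \oplus \pi_2$ with $\pi_1,\pi_2$ pairwise non-isomorphic non-trivial irreducibles (since $\GG_T$ is of Kac type, the trivial representation always appears with multiplicity one in $\alpha\bar\alpha$), and symmetrically for $\bar u u$. This gives the vertex space $\mathcal V$ of Theorem~\ref{thm:spectral-gap-criterion} as a direct sum of six one-dimensional pieces, one for each pair $(i,\alpha)$ with $\alpha \in \{u,\bar u\}$ and $i \in \{\eps,\pi_1,\pi_2\}$. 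The triangle presentation provides the explicit invariant tensor $E = \sum_{(i,j,k)\in T} e_i \ot e_j \ot e_k \in u^{\ot 3}$, which is the $\GG_T$-analogue of the vector driving the $\SU_\mu(3)$ computation in Proposition~\ref{prop:apply-to-su-q-3}. Using $E$ and the defining relations~\eqref{eqn:woron-twisted-unimodularity} one writes down standard solutions for the conjugate equations of $u$ together with explicit isometries $Z_u \in (\pi_k u,u)$ and $Z_{\bar u} \in (\pi_k \bar u,\bar u)$, and then expresses $\Delta$ as a block matrix indexed by the isotypic labels $i\in\{\eps,\pi_1,\pi_2\}$.

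The $i=\eps$ block is a scalar $2\times 2$ matrix with spectrum $\{0,2\}$ whose zero-eigenvector is the canonical constant $C$, so zero is automatically simple once the $i=\pi_k$ blocks are strictly positive, and it remains to estimate $\lambda_1$. For each $i=\pi_k$ the block is again $2\times 2$, with off-diagonal entries given by categorical traces of contractions involving $E$ and $E^*$. These can be evaluated using Frobenius reciprocity together with identities coming from the combinatorial axioms of a triangle presentation, in particular the fact that $E$ encodes the incidence relations of the projective plane structure on $F$ described in Remark~\ref{rem:triag-induced-projective-plane}. The target estimate is $\lambda_1 > 1/2$, after which Theorem~\ref{thm:spectral-gap-criterion} yields property~(T) of $\Rep(\GG_T)$; property~(T) of $\widehat{\GG_T}$ then follows via the equivalences recalled at the beginning of Section~\ref{sec:new-examples}, using that $\GG_T$ is of Kac type.

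\textbf{Part (b).} Here the plan is to use the monoidal functor $\bF : \Rep(\GG_T) \recht \mathcal{C}_f(K<G)$ announced in the introduction, where $G = \AutTr(\Delta_T)$ and $K$ is the stabilizer of the origin $e \in \Delta_T$. Under the hypothesis that $\Delta_T$ is the Bruhat--Tits building of a local field~$\KK$, the group $G$ is a type-rotating extension of $\PGL(3,\KK)$ and the double coset space $K \backslash G / K$ is given by the classical Cartan decomposition. The images $\bF(u)$ and $\bF(\bar u)$ correspond to the two $K$-orbits of vertices neighbouring the origin (one for each non-trivial type), and intertwiner spaces in $\mathcal{C}_f(K<G)$ are computed by $K$-orbits on $G$-products of such orbits. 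To prove $\dim(u\bar u,u\bar u)=3$ one first shows that $\bF$ is fully faithful on this specific intertwiner space, then counts $K$-orbits of pairs $(x,y)$ with $x$ a neighbour of $e$ of one type and $y$ a neighbour of $x$ of opposite type with $y$ of the same type as $e$: the classical analysis gives exactly three such orbits (the trivial backtrack together with two kinds of combinatorial geodesic of length two). For the existence of arbitrarily high dimensional irreducibles in $\Rep(\GG_T)$, the same analysis applied to higher tensor powers shows that the number of $K$-orbits appearing in $\bF(u^{\ot m}\bar u^{\ot n})$ grows without bound in $m+n$ (the Cartan double cosets of $\PGL(3,\KK)$ are indexed by two nonnegative integers), producing an infinite family of irreducibles of $\Rep(\GG_T)$ whose dimensions, computed through the dimension-preserving fiber functor provided by the Kac-type structure, are unbounded.

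The main obstacle will be the full-faithfulness step in part~(b), namely, controlling $\bF$ on the intertwiner space $(u\bar u,u\bar u)$ so that the geometric count of $K$-orbits in the building genuinely equals the categorical dimension; this is where the algebraic presentation of $\GG_T$ and the geometric structure of $\Delta_T$ must be matched up. Once this bridge is in place, both parts of the theorem reduce to concrete computations: part~(a) to a finite-dimensional linear algebra problem with the invariant tensor $E$, and the remainder of part~(b) to a standard enumeration of double cosets in a Bruhat--Tits building.
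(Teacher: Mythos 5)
Your part~(a) plan is essentially the paper's proof: generating set $S=\{u,\bar u\}$, the 6-dimensional vertex space, the block decomposition of the Laplacian indexed by $\{\eps,\pi_1,\pi_2\}$, and the target estimate $\lambda_1>1/2$. The paper carries out the off-diagonal trace computations and finds $\lambda_1 = 1-\sqrt q/(q+1)$, which is $>1/2$ exactly when $q\geq 2$; nothing in your outline conflicts with this.

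For part~(b) the general strategy (transport to $\cC_f(K<G)$ via a monoidal functor and exploit $K$-transitivity on spheres) is also the paper's, but two steps as you've stated them would not go through.

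First, you propose to ``first show that $\bF$ is fully faithful on $(u\bar u,u\bar u)$, then count $K$-orbits.'' Full faithfulness on this space cannot be a \emph{prerequisite}: fullness is precisely the statement that $\dim(u\bar u,u\bar u)$ equals $\dim\End(\bF(u\bar u))$, which is what you are trying to prove. The logic the paper uses is: faithfulness of $\bF$ gives $\dim(u\bar u,u\bar u)\leq\dim\End(\bF(u\bar u))$; the explicit intertwiners $Q,R,P$ constructed directly from the triangle presentation (Lemma~\ref{thm:basic-intertwiners}) give the independent lower bound $\dim(u\bar u,u\bar u)\geq 3$; and then one computes $\dim\End(\bF(u\bar u))=3$. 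Your outline omits the lower bound entirely, and without it the argument is circular. (Equivalently, the paper decomposes $u\bar u$ explicitly as $\eps\oplus\alpha\oplus\cS_{1,1}$ and uses $\bF$ only to certify irreducibility of $\alpha$ and $\cS_{1,1}$.)

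Second, your orbit count is stated incorrectly. The number $3$ is not the number of $K$-orbits of length-two walks $(e,x,y)$ with $y$ of the same type as $e$; under the transitivity hypotheses there are only \emph{two} such orbits (the backtrack $y=e$, and the geodesic with $y\in S_{1,1}$). The correct count is $\dim\End(\bF(u\bar u))$ in the category $\cC_f(K<G)$, where the $L^\infty(G/K)$-bimodule constraint separates the computation into $\dim\End_K(\ell^2(F))=2$, coming from $2$-transitivity of $K\actson F$ (i.e.\ two $K$-orbits on $F\times F$), plus $\dim\ell^\infty(S_{1,1})^K=1$, coming from transitivity on $S_{1,1}$. ``Two kinds of combinatorial geodesic of length two'' is not what contributes; there is only one kind, and the extra endomorphism lives on the backtrack piece. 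Similarly, for unbounded dimensions the paper does not count double cosets growing without bound but exhibits concrete irreducible subrepresentations $\cS_{n,m}\subset u^{\ot n}\bar u^{\ot m}$ of dimension $|S_{n,m}|$ (which is explicit), again certified irreducible via $\bF$ and $K$-transitivity on $S_{n,m}$. Finally, you take the existence of $\bF$ for granted; verifying that the generators $1^{\ot a}\ot E\ot 1^{\ot b}$ map to morphisms in $\cC_f(K<G)$ is a nontrivial geometric check (insertion of positive loops of length $3$) that must be carried out before any of this can start.
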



\begin{remark}
Note that under the hypothesis of point (a) in Theorem \ref{thm:repgt-main-theorem}, the tracial von Neumann algebra $L^\infty(\GG_T)$ has property~(T) as a von Neumann algebra. The structure of this von Neumann algebra is highly not understood. In particular, this von Neumann algebra seems to have no obvious connection to a group von Neumann algebra or a crossed product von Neumann algebra.
\end{remark}

\begin{remark}\label{rem:planar-algebra}
Triangle presentations $T$ of order $q \geq 2$ satisfying the hypotheses of Theorem \ref{thm:repgt-main-theorem} also gives rise to standard $\lambda$-lattices in the sense of \cite{popa-lambda-lattices} and to subfactor planar algebras in the sense of \cite{jones:planar} that have property~(T) in the sense of \cite{popa-symmenv}. Denote by $\cC = \Rep(\GG_T)$ the representation category of the compact quantum group $\GG_T$, with fundamental representation $u \in \cC$. A concrete description of the intertwiner spaces is given by Remark \ref{rem:Tannaka-point-of-view} below. Define $\cC_{00} = \cC_{11} \subset \cC$ as the subcategory of representations that appear in $u^{\ot n}$ with $n \equiv 0 \mod 3$. Define $\cC_{01} \subset \cC$ as those representations that appear in  $u^{\ot n}$ with $n \equiv 1 \mod 3$, and define $\cC_{10} \subset \cC$ as those that appear in $u^{\ot n}$ with $n \equiv 2 \mod 3$.

By definition, $u \in \cC_{01}$. Also by definition,
$$\cC_{00} \ot \cC_{01} \ot \cC_{11} \subset \cC_{01} \;\; , \quad \overline{\cC_{01}} = \cC_{10} \;\; , \quad \cC_{01} \ot \cC_{10} \subset \cC_{00} \; .$$
By Lemma \ref{lem:generating}, the tensor powers of $u \ot \bar{u}$ generate $\cC_{00}$. So we have defined a rigid $C^*$-$2$-category $\cC$ together with a generator $u \in \cC_{01}$.
As explained in e.g.\ \cite[Remark 2.1]{arano-vaes}, every such $C^*$-$2$-category can be viewed as a standard $\lambda$-lattice or as a subfactor planar algebra. More precisely, there exists an extremal finite index subfactor $N \subset M$ such that $\cC_{ij}$ can be identified with the category of resp.\ all $N$-$N$-bimodules, $N$-$M$-bimodules, $M$-$N$-bimodules and $M$-$M$-bimodules appearing in the Jones tower of $N \subset M$, in such a way that $u \in \cC_{01}$ corresponds to the generating $N$-$M$-bimodule $L^2(M)$.

Under the hypotheses of Theorem \ref{thm:repgt-main-theorem} and combining \cite[Propositions 5.2 and 5.7]{pv-repr-subfactors} with Lemma \ref{lem:generating} below, this standard $\lambda$-lattice has property~(T) in the sense of \cite[Section 9]{popa-symmenv}.
\end{remark}

\begin{lemma}\label{lem:compact-quantum}
The $*$-homomorphism $\Delta$ in Definition \ref{def:G-T} is well defined and turns $\GG_T$ into a compact quantum group of Kac type. The element $\bar{u} \in B(\CC^F) \otimes C(\GG_T)$ defined by $(\bar{u})_{ij} = u^*_{ij}$ is unitary and $\GG_T$ is of Kac type.
\end{lemma}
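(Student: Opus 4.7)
The plan is to proceed in three stages: well-definedness of $\Delta$ (together with coassociativity), unitarity of $\bar u$, and passage to a compact quantum group of Kac type.

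First, I would check that $\Delta$ is well defined via the universal property of $C(\GG_T)$. Setting $v_{ij} := \sum_k u_{ik} \otimes u_{kj}$, the matrix $v$ is unitary since $v = u_{13} u_{12}$ (in leg notation) is a product of two commuting unitaries. A direct expansion
\[
\sum_{(a,b,c)\in T} v_{ia} v_{jb} v_{kc} = \sum_{p,q,r} u_{ip} u_{jq} u_{kr} \otimes \!\!\sum_{(a,b,c)\in T}\!\! u_{pa} u_{qb} u_{rc} = \!\!\!\sum_{(p,q,r)\in T}\!\! u_{ip} u_{jq} u_{kr} \otimes 1 = \delta_{(i,j,k)\in T}\,(1 \otimes 1),
\]
using \eqref{eqn:woron-twisted-unimodularity} twice, confirms the triangle relation. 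Coassociativity is then immediate on generators.

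The crux is showing that $\bar u$ is unitary. The key observation is that \eqref{eqn:woron-twisted-unimodularity} is equivalent to the invariance of the vector $\xi := \sum_{(a,b,c)\in T} e_a \otimes e_b \otimes e_c$ in $(\CC^F)^{\otimes 3}$ under $u^{\otimes 3}$. Since $u$ is unitary, $\xi$ is also fixed by $(u^*)^{\otimes 3} = (u^{\otimes 3})^{-1}$, which gives the dual identity $\sum_{(a,b,c)\in T} u_{ai}^* u_{bj}^* u_{ck}^* = \delta_{(i,j,k)\in T}$, or equivalently, after taking adjoints, a reversed triangle identity
\[
\sum_{(a,b,c)\in T} u_{ck} u_{bj} u_{ai} = \delta_{(i,j,k)\in T}.
\]
I would then combine this new relation with \eqref{eqn:woron-twisted-unimodularity} and the cyclic invariance of $T$ to deduce the transposed orthogonality relations $\sum_k u_{ik}^* u_{jk} = \delta_{ij}$ and $\sum_k u_{ki} u_{kj}^* = \delta_{ij}$, which are exactly the conditions for $\bar u$ to be unitary in $B(\CC^F) \otimes C(\GG_T)$.

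Once both $u$ and $\bar u$ are unitary, Woronowicz's theory of compact matrix quantum groups applies: the dense $*$-subalgebra $\Pol(\GG_T)$ generated by the $u_{ij}$ becomes a Hopf $*$-algebra with antipode $S$ determined on generators by $S(u_{ij}) = u_{ji}^*$, and the density conditions for $\Delta$ on $C(\GG_T)$ follow from this Hopf-algebraic structure. Because $S^2(u_{ij}) = u_{ij}$, the antipode is involutive on $\Pol(\GG_T)$, which is equivalent to $\GG_T$ being of Kac type. The main obstacle is the middle step: the defining relations by themselves give only the column/row orthogonality of $u$, not the transposed relations needed for $\bar u$ to be unitary. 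The cyclic invariance of $T$ is what allows the reversed triangle identity derived from $(u^*)^{\otimes 3} \xi = \xi$ to combine with \eqref{eqn:woron-twisted-unimodularity} into enough symmetry to force $\bar u$ unitary; I expect the concrete derivation to be organized via an auxiliary $*$-antiautomorphism of $C(\GG_T)$ sending $u_{ij}$ to $u_{ji}$, whose well-definedness is precisely guaranteed by the pair of matching relations.
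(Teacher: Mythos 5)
The first stage of your proposal is essentially correct: writing $v_{ij}=\sum_k u_{ik}\ot u_{kj}$ and checking that $v$ is unitary and satisfies the triangle relation is exactly how one verifies that $\Delta$ is well defined, and the expansion you give is the right computation. (Two small remarks: in leg notation $v=u_{12}u_{13}$, not $u_{13}u_{12}$, and these factors do \emph{not} commute; the product of unitaries is unitary regardless, so this does not affect the conclusion.)

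The genuine gap is in the middle step. You claim that invariance of $E$ under $(u^{\ot 3})^{-1}$ yields the ``dual identity''
$\sum_{(a,b,c)\in T} u_{ai}^* u_{bj}^* u_{ck}^*=\delta_{(i,j,k)\in T}$
and, after taking adjoints, the ``reversed triangle identity''
$\sum_{(a,b,c)\in T} u_{ck} u_{bj} u_{ai}=\delta_{(i,j,k)\in T}$.
Both of these have the product in the \emph{wrong} order and are in fact false. Writing $u^{\ot 3}=U_{14}U_{24}U_{34}$, the inverse is $U_{34}^*U_{24}^*U_{14}^*$, \emph{not} $U_{14}^*U_{24}^*U_{34}^*$ (the factors share the $C(\GG_T)$-leg and do not commute). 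What one actually gets from $(u^{\ot 3})^{-1}(E\ot 1)=E\ot 1$ is
$\sum_{(a,b,c)\in T} u_{ck}^* u_{bj}^* u_{ai}^*=\delta_{(i,j,k)\in T}$,
whose adjoint is $\sum_{(a,b,c)\in T} u_{ai}u_{bj}u_{ck}=\delta_{(i,j,k)\in T}$ --- the product is in the \emph{same} order as the defining relation \eqref{eqn:woron-twisted-unimodularity}, only with the summation moved from the second to the first index of each $u$. Your reversed identity would be exactly the condition needed for your proposed $*$-antiautomorphism $u_{ij}\mapsto u_{ji}$ to be well defined, but that map does not exist: in the canonical quotient representation of $C(\GG_T)$ onto $C^*(\Gamma_T)$, where $u_{ij}\mapsto\delta_{ij}\pi(a_i)$, your identity asserts $a_k a_j a_i=e$ whenever $(i,j,k)\in T$, whereas the group relation only gives $a_i a_j a_k=e$; these differ by a commutator which is nontrivial in $\Gamma_T$. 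So the antiautomorphism route is not salvageable, and the deduction of the transposed orthogonality relations is left without any actual argument.

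The paper's proof handles this point very differently. It does use your two correct ingredients --- invariance of $E\ot 1$ under $U$ and under $U^{-1}$ --- but instead of trying to turn them into transposed orthogonality relations directly, it repackages them using the isometry $Y:\CC^F\to\CC^F\ot\CC^F$, $Ye_i=(q+1)^{-1/2}\sum_{(i,j,k)\in T}e_j\ot e_k$. From $U_{24}U_{34}(E\ot 1)=U_{14}^*(E\ot 1)$ one gets the intertwining relation $U_{13}U_{23}(Y\ot 1)=(Y\ot 1)\bar{U}$, and from $(E^*\ot 1)U_{14}U_{24}=(E^*\ot 1)U_{34}^*$, using the cyclic invariance of $T$ to match up indices with the same $Y$, one gets $(Y^*\ot 1)U_{13}U_{23}=\bar{U}(Y^*\ot 1)$. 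Since $Y$ is an isometry and $U_{13}U_{23}$ is unitary, these two relations together force $\bar{U}^*\bar{U}=1$ and $\bar{U}\bar{U}^*=1$. The cyclic invariance of $T$ enters precisely at the point where the second identity must be expressed with the \emph{same} $Y$ as the first; your proposal invokes cyclic invariance but never uses it, which is a sign that the proposed deduction is missing an essential ingredient.
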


\begin{proof}
Define the vector $E \in (\CC^F)^{\ot 3}$ given by
\begin{equation}\label{eqn:triangle-invariant-vector}
E=\sum_{(a,b,c)\in T} e_a \ot e_b \ot e_c \; ,
\end{equation}
which we also view as an operator from $\CC$ to $(\CC^F)^{\ot 3}$. Using the tensor leg numbering notation for the element $U = (u_{ij})_{i,j \in F}$ of $B(\CC^F) \ot C(\GG_T)$, the relation in \eqref{eqn:woron-twisted-unimodularity} becomes
\begin{equation}\label{eqn:with-U}
U_{14} \, U_{24} \, U_{34} \, (E \ot 1) = E \ot 1 \; .
\end{equation}
To prove that there is a well defined unital $*$-homomorphism $\Delta : C(\GG_T) \recht C(\GG_T) \ot C(\GG_T)$ as in Definition \ref{def:G-T}, satisfying $(\id \ot \Delta)(U) = U_{12} \, U_{13}$, we have to prove that
$$\bigl( U_{14} \, U_{15} \bigr) \, \bigl( U_{24} \, U_{25} \bigr) \, \bigl( U_{34} \, U_{35} \bigr) \, (E \ot 1 \ot 1) = E \ot 1 \ot 1 \; .$$
This follows immediately because the left hand side equals
$$\bigl(U_{14} \, U_{24} \, U_{34}\bigr) \; \bigl(U_{15} \, U_{25} \, U_{35}\bigr) \, (E \ot 1 \ot 1) =  U_{14} \, U_{24} \, U_{34} \, (E \ot 1 \ot 1) = E \ot 1 \ot 1\; .$$

Define the element $\bar{U} \in B(\CC^N) \ot C(\GG_T)$ given by $(\bar{U})_{ij} = u_{ij}^*$. Define for every $i \in F$, the vector $\xi_i \in \CC^F \ot \CC^F$ given by
\begin{equation}\label{eqn:vector-xi}
\xi_i = \sum_{j,k, (i,j,k) \in T} e_j \ot e_k \; .
\end{equation}
Define the isometry $Y : \CC^N \recht \CC^N \ot \CC^N$ given by $Y e_i = (q+1)^{-1/2} \xi_i$.

It follows from \eqref{eqn:with-U} that $U_{24} \, U_{34} \, (E \ot 1) = U_{14}^* (E \ot 1)$. This equation can be rewritten as
\begin{equation}\label{eqn:first-Ubar}
U_{13} \, U_{23} \, (Y \ot 1) = (Y \ot 1) \, \bar{U} \; .
\end{equation}
Since $U$ is unitary, it also follows from \eqref{eqn:with-U} that $(E^* \ot 1) \, U_{14} \, U_{24} \, U_{34} = E^* \ot 1$, so that
$$(E^* \ot 1) \, U_{14} \, U_{24} = (E^* \ot 1) \, U_{34}^* \; .$$
Since $T$ is invariant under cyclic permutations, we conclude that $(Y^* \ot 1) \, U_{13} \, U_{23} = \bar{U} \, (Y^* \ot 1)$. Because $Y$ is an isometry and $U$ is unitary, it follows in combination with \eqref{eqn:first-Ubar} that $\bar{U}$ is unitary. So, $\Delta$ turns $C(\GG_T)$ into a compact quantum group of Kac type.
\end{proof}

\begin{remark}
Note that $\GG_T$ is nontrivial: if $\pi$ is an arbitrary unitary representation of the discrete group $\Gamma_T$ on some Hilbert space $\mathcal{H}$, we can realize the defining relations of~$C(\GG_T)$ inside $\mathcal{B}(\mathcal{H})$ by mapping $u_{ij}$ to $\delta_{ij}\pi(a_i)$. So, the discrete group $\Gamma_T$ is a quotient of the discrete quantum group $\widehat{\GG_T}$. This already shows that $C(\GG_T)$ is noncommutative.
\end{remark}

\begin{remark}\label{rem:Tannaka-point-of-view}
Let $T$ be a triangle presentation with associated compact quantum group $\GG_T$ with fundamental representation $u$. Since $\bar{u}$ is a subrepresentation of $u u$, all irreducible representations of $\GG_T$ appear as a subrepresentation of a tensor power of $u$. By the Tannaka--Krein theorem of Woronowicz, the morphism spaces $(u^{\ot n},u^{\ot m})$ are linearly spanned by products of intertwiners of the form $1^{\ot a} \ot E \ot 1^{\ot b}$ and $1^{\ot c} \ot E^* \ot 1^{\ot d}$, where $E$ is defined by \eqref{eqn:triangle-invariant-vector}. In particular, $(u^{\ot n},u^{\ot m}) = \{0\}$ when $n-m$ is not a multiple of $3$.
\end{remark}

\subsection{Proof of Theorem \ref{thm:repgt-main-theorem}, part (a)}\label{sec:triag-pres-laplacian}

The aim of this section is to use Theorem~\ref{thm:spectral-gap-criterion} to prove part (a) of Theorem~\ref{thm:repgt-main-theorem}. So we first analyze the endomorphisms of a few low-order tensor powers of the fundamental representation of $\GG_T$.

Fix a finite set $F$ and a triangle presentation $T \subset F \times F \times F$ of order $q \geq 2$. Consider the associated compact quantum group $\GG_T$ with fundamental representation $u$ as in Definition \ref{def:G-T}. Write $N = 1 + q + q^2$ and identify $F = \{1,\ldots,N\}$. Denote the standard basis of $\CC^N$ by $e_1,\ldots,e_N$. Given $I=(i_1,\ldots,i_k)\in F^k$, we denote by $e_I$ the vector $e_{i_1}\otimes\cdots \otimes e_{i_k}$ in $(\CC^N)^{\otimes k}$. We also often denote the multi-index $(i_1,\ldots,i_n)$ as the word $i_1\cdots i_n$.

Define the vector $E \in (\CC^N)^{\ot 3}$ given by \eqref{eqn:triangle-invariant-vector} and the isometry
$$Y : \CC^N \recht \CC^N \ot \CC^N : Y e_i= (q+1)^{-1/2} \sum_{j,k, (i,j,k) \in T} e_{jk} \; .$$
By Lemma \ref{lem:compact-quantum}, the operator $\bar{u} \in B(\CC^N) \ot C(\CC_T)$ given by $(\bar{u})_{ij} = u_{ij}^*$ is a unitary representation of $\GG_T$, which is by construction the contragredient of $u$. Also by construction, $Y \in (uu,\bar{u})$ and
\begin{equation*}
t_u: \varepsilon\to \bar{u} \otimes u: \frac{1}{\sqrt{q+1}} (Y^*\otimes 1) E \; , \quad
s_u: \varepsilon\to u\otimes \bar{u}: \frac{1}{\sqrt{q+1}} (1\otimes Y^*) E \; ,
\end{equation*}
solve the conjugate equations for $u$.


Define the intertwiner $P \in (uu,uu)$ given by
\begin{equation*}
P=\frac{1}{q+1}(E^*\otimes 1 \ot 1)(1 \ot 1 \otimes E)=\frac{1}{q+1}(1 \ot 1 \ot E^*)(E\otimes 1 \ot 1) = YY^* \;.
\end{equation*}
Then, $P$ is the orthogonal projection of $\CC^N \ot \CC^N$ onto the $N$-dimensional subrepresentation of $uu$ spanned by the vectors $\xi_i = Y e_i$, $i \in F$.

The following lemma shows that the behavior of the fundamental representation of $\GG_T$ differs from that of $\SU_\mu(3)$ because the tensor products $uu$, $u\bar{u}$ and $\bar{u}u$ have more endomorphisms. The proof of the lemma follows by a straightforward computation.

\begin{lemma}\label{thm:basic-intertwiners}
Define the projections
\begin{itemize}
    \item $R$ of $(\CC^N)^{\otimes 2}$ onto the subspace spanned by the vectors $\{e_{ij}\mid i,j\in F, i\to j\text{ in } T\}$,
    \item $Q$ of $(\CC^N)^{\otimes 3}$ onto the subspace spanned by the vectors $\{e_i \ot \xi_i \mid i \in F\}$.
\end{itemize}
Then
    \begin{align}
    \label{eqn:heckeish-relation}
    (1\otimes P)(P\otimes 1)(1\otimes P)&=\frac{1}{(q+1)^3} \left((q+1)(1\otimes P) + (q^2-1)Q + EE^*\right)\;,\\
    \label{eqn:q-r-frobenius}
    R&=(E^*\otimes 1 \ot 1)(1\otimes Q\otimes 1)(1 \ot 1 \otimes E)\;.
    \end{align}
    In particular, $R\in (uu,uu)$ and $Q\in (uuu,uuu)$.

Also, $(Y^* \ot 1) (1 \ot Y)$ is an invertible element of $(\bar{u} u , u \bar{u})$, so that $u\bar{u}\cong\bar{u}u$ and
\begin{equation}
    \label{eqn:intw-space-dimensions}
        \dim(uu,uu)=\dim(u\bar{u},\bar{u}u)=\dim(u\bar{u},u\bar{u})\geq 3\;.
    \end{equation}
\end{lemma}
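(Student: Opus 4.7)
The proof is a direct computation on basis vectors of $(\CC^N)^{\otimes k}$, leveraging the axioms of a triangle presentation and the cyclic invariance of $T$. Given $a, b \in F$ with $a \to b$, denote by $T(a,b)$ the unique $c \in F$ with $(a,b,c) \in T$, so that $P e_{ij} = (q+1)^{-1} \mathbf{1}_{i \to j} \xi_{T(i,j)}$ and $Q(e_b \otimes e_c \otimes e_d) = (q+1)^{-1} \mathbf{1}_{(b,c,d) \in T}\, e_b \otimes \xi_b$.

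To establish \eqref{eqn:heckeish-relation}, I would apply $(1 \otimes P)(P \otimes 1)(1 \otimes P)$ to a generic basis vector $e_a \otimes e_b \otimes e_c$. Each successive application of $P$ introduces an incidence constraint and replaces a pair of adjacent factors by $\xi_{T(\cdot,\cdot)}$; the triangle axioms (successor/predecessor uniqueness together with cyclic invariance) then pin down which basis vectors survive. The resulting sum splits into three classes of terms, matching the three summands on the right-hand side: a generic contribution proportional to $(1 \otimes P)$, a contribution forced onto the diagonal subspace $\mathrm{span}\{e_i \otimes \xi_i\}$ and hence matching $Q$, and a contribution collapsing onto $\CC \cdot E$. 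The coefficients $(q+1)$, $(q^2-1)$ and $1$ come out of counting triples satisfying the relevant incidences in the underlying projective plane (Remark~\ref{rem:triag-induced-projective-plane}). This bookkeeping is the main technical obstacle. Once \eqref{eqn:heckeish-relation} is in hand, $Q$ is expressed as a linear combination of elements of $(uuu,uuu)$, and hence $Q \in (uuu,uuu)$.

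For \eqref{eqn:q-r-frobenius}, I would apply the right-hand side to $e_{ab}$ and simplify in three stages. Expanding the rightmost $E$ as a sum over $(c,d,e) \in T$, the action of $Q$ forces $(b,c,d) \in T$, parametrizing the surviving terms by the $q+1$ successors of $b$. Then $E^*$ forces $(a,b,j) \in T$, collapsing to the condition $a \to b$ with $j = T(a,b)$, and cyclic invariance of $T$ identifies the remaining contribution with $e_{ab}$. Summing and cancelling the factor $q+1$ yields $\mathbf{1}_{a \to b} e_{ab} = R e_{ab}$. Since the right-hand side is an explicit composition of intertwiners ($E \in (\eps,uuu)$ and $Q \in (uuu,uuu)$), we conclude $R \in (uu,uu)$.

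For the final assertion, let $X = (Y^* \otimes 1)(1 \otimes Y) \in (\bar u u, u \bar u)$. Using $Y^*Y = 1$, we have $X^*X = (1 \otimes Y^*)(P \otimes 1)(1 \otimes Y) = (1 \otimes Y^*)(1 \otimes P)(P \otimes 1)(1 \otimes P)(1 \otimes Y)$; substituting \eqref{eqn:heckeish-relation} and simplifying each of the three resulting terms shows that $X^*X$ acts as a positive scalar on $\mathrm{span}\{e_{ij} : i \neq j\}$ and as an invertible positive operator on $\mathrm{span}\{e_{ii}\}$, so $X$ is invertible and yields $u\bar{u} \cong \bar{u}u$. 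Finally, $1, P, R \in (uu, uu)$ are linearly independent: $R$ vanishes on $e_{ij}$ with $i \not\to j$ whereas $1$ does not (such pairs exist since $q \geq 2$ forces $(q+1)N < N^2$), and for $i \to j$ the vectors $P e_{ij} = (q+1)^{-1} \xi_{T(i,j)}$ and $R e_{ij} = e_{ij}$ span a two-dimensional subspace of $(\CC^N)^{\otimes 2}$. Combining the Frobenius isomorphism $(uu, uu) \cong (\bar u u, u \bar u)$ with $u\bar{u} \cong \bar{u}u$ gives the chain of equalities and the lower bound $\geq 3$ in \eqref{eqn:intw-space-dimensions}.
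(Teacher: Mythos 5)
Your proposal is correct and takes exactly the approach the paper intends: the paper dismisses the lemma as ``a straightforward computation,'' and you carry out that computation on basis vectors, using $Pe_{ij}=(q+1)^{-1}\mathbf{1}_{i\to j}\xi_{T(i,j)}$ and $Q(e_b\otimes e_c\otimes e_d)=(q+1)^{-1}\mathbf{1}_{(b,c,d)\in T}\,e_b\otimes\xi_b$, the triangle axioms to count solutions, deduce $Q\in(uuu,uuu)$ and $R\in(uu,uu)$ from the two identities, and establish invertibility of $X=(Y^*\otimes 1)(1\otimes Y)$ by computing $X^*X$ via \eqref{eqn:heckeish-relation}; the linear independence of $1,P,R$ together with Frobenius reciprocity then gives \eqref{eqn:intw-space-dimensions}. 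The only small caveats are cosmetic: the description ``splits into three classes of terms'' is better read as a case analysis on whether $b\to c$ and whether $(a,b,c)\in T$ (in the latter case all three summands of the right-hand side contribute simultaneously), and the notation $E\in(\eps,uuu)$ should be $E\in(uuu,\eps)$ in the paper's convention where $(\beta,\alpha)$ denotes morphisms $\alpha\to\beta$.
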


%
%
%

The hypothesis of part (a) of Theorem~\ref{thm:repgt-main-theorem} asserts that the inequality in \eqref{eqn:intw-space-dimensions} is an equality, while part (b) of that theorem gives a sufficient condition for this to be the case.

\begin{proof}[Proof of Theorem~\ref{thm:repgt-main-theorem}, part (a)]
The representation $u \bar{u}$ on the Hilbert space $\CC^N \ot \CC^N$ contains the trivial representation $\eps$, spanned by the invariant vector $\sum_i e_i \ot e_i$. By Lemma \ref{thm:basic-intertwiners}, the vectors $e_i \ot e_i, i \in F$, span a subrepresentation $\zeta$ of $u \bar{u}$. We define the representation $\al$ as the orthogonal complement of the trivial representation $\eps$ inside $\zeta$. So $\al$ is given by restricting $u \bar{u}$ to the invariant subspace of vectors of the form $\sum_i \lambda_i e_i \ot e_i$ with $\sum_i \lambda_i = 0$. Finally, we denote by $\be$ the orthogonal complement of $\eps \oplus \al$ inside $u \bar{u}$. By construction, $d(\alpha)=N-1$ and $d(\beta)=N(N-1)$. The assumption that $\dim (u \bar{u},u\bar{u}) = 3$ then ensures that $u$ is irreducible and that $u \bar{u}$ splits as the direct sum of the inequivalent irreducible representations $u \bar{u} = \varepsilon\oplus\alpha\oplus\beta$.

By Remark \ref{rem:Tannaka-point-of-view}, there are no nonzero intertwiners between $u\bar{u}$ and $uu$, so that none of $\alpha$, $\beta$, $\eps$ embed into $uu$.

We now compute the Laplacian of $\Rep(\GG_T)$ with respect to the generating set $S=\{u,\bar{u}\}$. The vertex space $\mathcal{V}$ is 6-dimensional and given by
\[
\mathcal{V}=(u,u)\oplus (\bar{u},\bar{u})\oplus (\alpha u, u)\oplus (\alpha \bar{u}, \bar{u}) \oplus (\beta u, u) \oplus (\beta\bar{u},\bar{u})\;.
\]
The weight function $\nu$ is given by $\nu(u)=\nu(\bar{u})=d(u)^{-1}=N^{-1}$. As with our computation for $\SU_\mu(3)$, this means that we can work with a basis of isometries. Denote the inclusion of $\alpha$ (resp.\@ $\beta$) in $u\bar{u}$ by $I_{u\bar{u}}^\alpha$ (resp.\@ $I_{u\bar{u}}^\beta$). It follows from Lemma \ref{thm:basic-intertwiners} that
\begin{align*}
I_{\bar{u}u}^\alpha &= \frac{q+1}{\sqrt{q}} (Y^*\otimes 1)(1\otimes Y) I_{u\bar{u}}^\alpha\quad \in (\bar{u}u,\alpha)\;,\\
I_{\bar{u}u}^\beta &= (q+1)(Y^*\otimes 1)(1\otimes Y)I_{u\bar{u}}^\beta\quad \in (\bar{u}u,\beta)
\end{align*}
are isometric intertwiners. Frobenius reciprocity then yields the following list of isometries:
\begin{align*}
Z_{u}^\alpha&=\sqrt{\frac{N}{N-1}}  ((I_{u\bar{u}}^\alpha)^*\otimes 1)(1\otimes t_u)\quad\in (\alpha u,u)\;, \\
Z_{\bar{u}}^\alpha &=\sqrt{\frac{N}{N-1}} ((I_{\bar{u}u}^\alpha)^*\otimes 1)(1\otimes s_u)\quad \in (\alpha \bar{u},\bar{u})\;,\\
Z_{u}^\beta&=\sqrt{\frac{1}{N-1}} ((I_{u\bar{u}}^\beta)^*\otimes 1)(1\otimes t_u)\quad \in (\beta u,u)\;,\\
Z_{\bar{u}}^\beta &=\sqrt{\frac{1}{N-1}} ((I_{\bar{u}u}^\beta)^*\otimes 1)(1\otimes s_u)\quad \in (\beta \bar{u},\bar{u})\;.
\end{align*}
With respect to the basis $1_u,1_{\bar{u}}, Z_{u}^\alpha, Z_{\bar{u}}^\alpha, Z_{u}^\beta, Z_{\bar{u}}^\beta$ of $\mathcal{V}$, the Laplacian is given by the block matrix
\[
\Delta = \left(\begin{matrix} 1 & -1 \\ -1 & 1\end{matrix}\right) \oplus \left(\begin{matrix} 1 & -\lambda_\alpha \\ -\lambda_\alpha & 1\end{matrix}\right)\oplus \left(\begin{matrix} 1 & -\lambda_\beta \\ -\lambda_\beta & 1\end{matrix}\right)
\]
where
\begin{align*}
\lambda_\alpha &= N^{-1}\Tr_{\bar{u}}(Y^*((Z_{\bar{u}}^\alpha)^\#\otimes 1)(1\otimes Z_{u}^\alpha)Y)\;,\\
\lambda_\beta &= N^{-1}\Tr_{\bar{u}}(Y^*((Z_{\bar{u}}^\beta)^\#\otimes 1)(1\otimes Z_{u}^\beta)Y)\;.
\end{align*}
Calculating $\lambda_\alpha$ and $\lambda_\beta$ is now rather straightforward.
Making use of Lemma \ref{thm:basic-intertwiners}, one readily verifies that
\begin{align*}
((Z_{\bar{u}}^\alpha)^\#\otimes 1)(1\otimes Z_u^\alpha) &= \frac{1}{N-1}\frac{1}{\sqrt{q}} (NR-(q+1)(1 \ot 1))\;,\\
((Z_{\bar{u}}^\beta)^\#\otimes 1)(1\otimes Z_u^\beta) &= \frac{1}{N-1}\left((q+1)P-R\right)\;.
\end{align*}
After taking traces, we obtain
\begin{align*}
\lambda_\alpha &= \frac{1}{N-1}\frac{1}{\sqrt{q}}(N-(q+1)) = \frac{\sqrt{q}}{q+1}\;,\\
\lambda_\beta &= \frac{1}{N-1}((q+1)-1)=\frac{1}{q+1}\;.
\end{align*}
We conclude that the spectrum of $\Delta$ is given by
\[
\sigma(\Delta)=\left\{0\;,\;1-\frac{\sqrt{q}}{q+1}\;,\;1-\frac{1}{q+1}\;,\;1+\frac{1}{q+1}\;,\;1+\frac{\sqrt{q}}{q+1}\;,\;2\right\}\;.
\]
Since $\sqrt{q}/(q+1)<1/2$ for all $q\geq 2$, we conclude that the smallest eigenvalue of $\Delta$ is strictly larger than $1/2$.
By Theorem~\ref{thm:spectral-gap-criterion}, we conclude that $(\{u,\bar{u}\}, \varepsilon_q)$ is a Kazhdan pair for $\Rep(\GG_T)$, where
\[
\varepsilon_q=\frac{(\sqrt{q}-1)^2}{q-\sqrt{q}+1}\;.
\]
Since $\GG_T$ is of Kac type, it follows that also $\widehat{\GG_T}$ has property~(T) (see beginning of Section \ref{sec:new-examples} for references).
\end{proof}

\begin{lemma}\label{lem:generating}
Let $T$ be a triangle presentation of order $q \geq 2$ with associated compact quantum group $\GG_T$ with fundamental representation $u$.

There is a well defined map $\tau : \Irr(\GG_T) \recht \ZZ/3\ZZ$ satisfying $\tau(\al) = k$ if $\al \subset u^{\ot n}$ and $n \equiv k \mod 3$. This map satisfies $\tau(\gamma) = \tau(\al)+\tau(\be)$ when $\gamma \subset \al \ot \be$ and $\tau(\bar{\al}) = - \tau(\al)$, for all $\al,\be,\gamma \in \Irr(\GG_T)$.

Defining the full $C^*$-tensor subcategory $\cC_0 \subset \Rep(\GG_T)$ as the kernel of $\tau$, i.e.\ as the category of representations that are a direct sum of $\al \in \Irr(\GG_T)$ with $\tau(\al) = 0$, we get that $\cC_0$ is generated by $u \ot \overline{u}$.
\end{lemma}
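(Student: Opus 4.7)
The plan is to first build the grading $\tau$ from the fact that $(u^{\ot n}, u^{\ot m}) = 0$ unless $3 \mid (n-m)$ (Remark \ref{rem:Tannaka-point-of-view}), and then argue that $u \ot \bar{u}$ generates the kernel by combining the isomorphism $u\bar u \cong \bar u u$ from Lemma \ref{thm:basic-intertwiners} with the embedding $\eps \hookrightarrow \bar u^{\ot 3}$ obtained by conjugating $E$.

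First I would verify that setting $\tau(\al) = n \bmod 3$ for any $n$ with $\al \subset u^{\ot n}$ is well defined. Every irreducible appears in some tensor power of $u$ because $\bar u \subset u\ot u$ (see the proof of Lemma \ref{lem:compact-quantum}), and if $\al$ embeds simultaneously into $u^{\ot n}$ and $u^{\ot m}$ then composing one embedding with the adjoint of the other gives a nonzero element of $(u^{\ot n}, u^{\ot m})$, forcing $n \equiv m \pmod 3$ by Remark \ref{rem:Tannaka-point-of-view}. Multiplicativity $\tau(\gamma) = \tau(\al) + \tau(\be)$ for $\gamma \subset \al \ot \be$ is immediate by tensoring embeddings, and $\tau(\bar\al) = -\tau(\al)$ follows by conjugating $\al \subset u^{\ot n}$ to get $\bar\al \subset \bar u^{\ot n}$ and then applying $\bar u \subset u\ot u$ to embed $\bar u^{\ot n} \subset u^{\ot 2n}$, so that $\tau(\bar\al) \equiv 2n \equiv -n \pmod 3$.

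For the generation claim, clearly $u \ot \bar u \in \cC_0$ since $\tau(u\bar u) = 1 - 1 = 0$. The harder direction is to show that every irreducible $\al$ with $\tau(\al) = 0$ is a subobject of some $(u\bar u)^{\ot k}$. Since such $\al$ is a subobject of some $u^{\ot 3m}$, it suffices to prove that $u^{\ot 3m}$ is a subobject of $(u\bar u)^{\ot 3m}$. The engine of the argument is the isomorphism $u\bar u \cong \bar u u$ of $\GG_T$-representations provided by the invertible intertwiner $(Y^* \ot 1)(1 \ot Y) \in (\bar u u, u\bar u)$ from Lemma \ref{thm:basic-intertwiners}. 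Tensoring this isomorphism with identities and applying it to swap adjacent $\bar u u$ pairs inside $(u\bar u)^{\ot 3m} = u \bar u u \bar u \cdots u \bar u$ yields, after a routine sequence of swaps, an isomorphism $(u\bar u)^{\ot 3m} \cong u^{\ot 3m} \ot \bar u^{\ot 3m}$.

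To finish, I would conjugate the embedding $\eps \hookrightarrow u^{\ot 3}$ afforded by $E$ in \eqref{eqn:triangle-invariant-vector} to obtain $\eps \hookrightarrow \bar u^{\ot 3}$, and hence $\eps \hookrightarrow \bar u^{\ot 3m}$ by tensoring $m$ copies. Combining with the previous isomorphism gives $u^{\ot 3m} = u^{\ot 3m} \ot \eps \subset u^{\ot 3m} \ot \bar u^{\ot 3m} \cong (u\bar u)^{\ot 3m}$, so that $\al \subset (u\bar u)^{\ot 3m}$ as required. The only step that demands a bit of care is the sequence of adjacent swaps used to move every $\bar u$ past every $u$; this is a purely bookkeeping exercise relying on functoriality of the tensor product and the single categorical input that $u\bar u\cong \bar u u$, and I do not see a genuine conceptual obstacle elsewhere.
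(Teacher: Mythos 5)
Your proof is correct and follows essentially the same route as the paper's: well-definedness of $\tau$ via Remark~\ref{rem:Tannaka-point-of-view}, and generation of $\cC_0$ by $u\bar u$ obtained from $u\bar u\cong\bar u u$ (Lemma~\ref{thm:basic-intertwiners}) together with $\eps\subset\bar u^{\ot 3}$. The paper applies the swap isomorphism once to get $(u\bar u)^{\ot 3}\cong u^{\ot 3}\ot\bar u^{\ot 3}$ and then notes $\cC_0$ is generated by $u^{\ot 3}$, whereas you carry the swaps out across $(u\bar u)^{\ot 3m}$ directly; this is a cosmetic difference only.
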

\begin{proof}
It follows from Remark \ref{rem:Tannaka-point-of-view} that $\tau$ is well defined. By construction, $\tau(\gamma) = \tau(\al)+\tau(\be)$ when $\gamma \subset \al \ot \be$. Since $\bar{u} \subset u \ot u$, it also follows that $\tau(\bar{\al}) = - \tau(\al)$, for all $\al,\be,\gamma \in \Irr(\GG_T)$.

By definition, $\cC_0$ is generated by $u^{\ot 3}$. So it remains to prove that $u^{\ot 3}$ is a subrepresentation of a tensor power of $u \ot \bar{u}$. By Lemma \ref{thm:basic-intertwiners}, $\bar{u} \ot u \cong u \ot \bar{u}$. It follows that $(u \ot \bar{u})^{\ot 3} \cong u^{\ot 3} \ot {\bar{u}}^{\ot 3}$. Since the trivial representation is contained in ${\bar{u}}^{\ot 3}$, the lemma is proved.
\end{proof}

\subsection{\boldmath Proof of Theorem \ref{thm:repgt-main-theorem}, part (b)}\label{sec:triag-pres-building}

To prove part (b) of Theorem \ref{thm:repgt-main-theorem}, fix a triangle presentation $T$ of order $q \geq 2$ and base set $F$. Denote by $\Gamma_T$ the countable group defined by \eqref{eqn:triangle-group} with generating set $S = S_+ \cup S_-$ where $S_+ = \{a_i \mid i \in F\}$ and $S_- = S_+^{-1}$. By \cite[Theorem 3.4]{cmsz1}, the Cayley graph $\Delta_T$ of $\Gamma_T$ with respect to $S$ is the $1$-skeleton of a thick $\tilde{A}_2$-building. By definition, $\Delta_T$ has vertex set $\Gamma_T$ and edge set $\{(g,g h) \mid g \in \Gamma_T, h \in S\}$. When $h \in S_+$, resp.\ $h \in S_-$, we say that $(g,gh)$ is a positive, resp.\ negative edge.

Denote by $\tau : \Gamma_T \recht \ZZ/3\ZZ$ the unique group homomorphism satisfying $\tau(a_i)=1$ for all $i \in F$. Then $\tau$ can be viewed as a type map on the building $\Delta_T$. An automorphism $\al$ of the graph $\Delta_T$ is said to be \emph{type-rotating} if there exist an $r \in \ZZ/3\ZZ$ such that $\tau(\al(v)) = \tau(v) +r$ for all $v \in \Delta_T$. Note that this is equivalent with saying that $\al$ preserves the sign of all edges. Denote by $\AutTr(\Delta_T)$ the totally disconnected locally compact group of all type-rotating automorphisms of $\Delta_T$.

The word problem for $\Gamma_T$ is solved in \cite[Proposition~3.2]{cmsz1}, where it is shown that every element $g \in \Gamma_T$ can be uniquely written as $g=a_{i_1}\cdots a_{i_n} a_{j_1}^{-1}\cdots a_{j_m}^{-1}$ where $n,m \geq 0$ and the indices $i_s,j_r \in F$ satisfy the following two conditions.
\begin{enumerate}[(i)]
\item $i_n\neq j_1$ if $n,m\geq 1$,
\item $i_s\not\to i_{s+1}$ if $1\leq s<n$ and $j_{r+1}\not\to j_r$ if $1 \leq r < m$.
\end{enumerate}

For all $n,m \geq 0$, denote this set of reduced expressions as
\begin{equation}\label{eqn:Snm}
S_{n,m} = \{a_{i_1}\cdots a_{i_n} a_{j_1}^{-1}\cdots a_{j_m}^{-1} \mid i_s,j_r \in F \;\;\text{satisfy conditions (i) and (ii) above}\} \; .
\end{equation}
The following result is also a consequence of \cite[Proposition~3.2]{cmsz1}: the word length of $g \in S_{n,m}$ with respect to $S$ equals $n+m$ and any expression of $g \in S_{n,m}$ as a product of $n+m$ generators must use $n$ generators from $S_+$ and $m$ generators from $S_-$.

So, $S_{n,m}$ can be described geometrically as the set of vertices $g \in \Delta_T$ that are at distance $n+m$ from $e$ and that can be reached by a path involving $n$ positive and $m$ negative edges. In particular, whenever $\al \in \AutTr(\Delta_T)$ and $\al(e) = e$, we get that $\al(S_{n,m}) = S_{n,m}$ for all $n,m \geq 0$. For every vertex $g \in S_{n,m}$, there is a unique path from $e$ to $g$ of length $n+m$ with the first $n$ edges being positive and the last $m$ edges being negative.

Part (b) of Theorem \ref{thm:repgt-main-theorem} is now an immediate consequence of the following result.

\begin{proposition}\label{thm:generic-transitivity}
Let $T$ be a triangle presentation of order $q\geq 2$ with associated building $\Delta_T$ and compact quantum group $\GG_T$ with fundamental representation $u$ of dimension $N = 1+q+q^2$. Put $G=\AutTr(\Delta_T)$ and let $K$ be the stabilizer subgroup of the origin $e\in\Delta_T$. Define $S_{n,m} \subset \Delta_T$ as in \eqref{eqn:Snm}.
\begin{enumerate}[(i)]
\item If the action of $K$ on $S_{1,1}$ is transitive and the action of $K$ on $S_{1,0} = F$ is $2$-transitive, then $\dim(u\bar{u},u\bar{u})=3$, so that both $\Rep(\GG_T)$ and $\widehat{\GG_T}$ have property~(T).
\item Let $n,m \geq 0$. If the action of $K$ on $S_{n,m}$ is transitive, then the vectors $\{e_{i_1 \cdots i_n} \ot e_{j_1 \cdots j_m} \mid a_{i_1}\cdots a_{i_n} a_{j_1}^{-1} \cdots a_{j_m}^{-1} \in S_{n,m}\}$ span an irreducible invariant subspace $\cS_{n,m}$ of $u^{\ot n} \ot {\bar{u}}^{\ot m}$. We have
    \begin{alignat*}{2}
    &\dim(\cS_{n,m}) = |S_{n,m}|= N(N-1) q^{2(n+m-2)} &&\quad\text{if $n,m \geq 1$,}\\
    &\dim(\cS_{n,0}) = |S_{n,0}| = \dim(\cS_{0,n}) = |S_{0,n}| = N q^{2(n-1)} &&\quad\text{if $n \geq 1$.}
    \end{alignat*}
\item If $\Delta_T$ is the Bruhat--Tits $\tilde{A}_2$-building of a commutative local field $\KK$, then the action of $K$ is transitive on each $S_{n,m}$, $n,m \geq 0$ and $2$-transitive on $S_{1,0}$, so that all the conclusions in (i) and (ii) hold.
\end{enumerate}
\end{proposition}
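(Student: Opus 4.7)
The proof rests on the monoidal functor $\mathbf{F} : \Rep(\GG_T) \to \mathcal{C}_f(K<G)$ constructed earlier in the paper, which sends the fundamental representation $u$ to the natural $N$-dimensional $K$-representation on the positive neighbors of $e$. The key feature is that $\mathbf{F}$ is faithful on morphism spaces and that $\mathbf{F}(u^{\otimes n} \otimes \bar{u}^{\otimes m})$ decomposes naturally by endpoint of the $(n,m)$-shape paths in $\Delta_T$ (those consisting of $n$ positive edges followed by $m$ negative edges) starting at $e$. Under this decomposition, endomorphisms of $u^{\otimes n} \otimes \bar{u}^{\otimes m}$ embed into $K$-invariant functions on pairs of such paths sharing an endpoint.

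For part (i), a $(1,1)$-shape path from $e$ is determined by a pair $(i,j) \in F \times F$ with endpoint $a_i a_j^{-1}$, which lies in $S_{1,1}$ when $i \neq j$ (with unique representation) and equals $e$ when $i=j$. Pairs of paths with common endpoint therefore split into those ending at $e$, naturally parameterized by $F \times F$ via $((i,i),(j,j))$, and those ending in $S_{1,1}$, naturally parameterized by $S_{1,1}$. Under 2-transitivity of $K$ on $F = S_{1,0}$, the first set contributes the two orbits (diagonal and off-diagonal), and under transitivity on $S_{1,1}$, the second contributes one orbit, giving three $K$-orbits in total. By faithfulness of $\mathbf{F}$, this yields $\dim(u\bar{u},u\bar{u}) \leq 3$. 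Combined with the lower bound from Lemma~\ref{thm:basic-intertwiners}, equality holds, and property~(T) for $\Rep(\GG_T)$ and $\widehat{\GG_T}$ follows from part (a) of Theorem~\ref{thm:repgt-main-theorem}.

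For part (ii), I would identify $\cS_{n,m}$ as the direct summand of $\mathbf{F}(u^{\otimes n} \otimes \bar{u}^{\otimes m})$ corresponding to paths with endpoint in $S_{n,m}$, so that the projection onto $\cS_{n,m}$ is automatically a $\Rep(\GG_T)$-intertwiner and $\cS_{n,m}$ is a genuine subrepresentation. Transitivity of $K$ on $S_{n,m}$, together with the uniqueness of reduced word representations, produces a single $K$-orbit on pairs of reduced paths ending in $S_{n,m}$, so by faithfulness $\dim(\cS_{n,m},\cS_{n,m}) \leq 1$ and $\cS_{n,m}$ is irreducible. The cardinality of $S_{n,m}$ is an elementary consequence of Definition~\ref{def:triangle-pres}: $i_1$ admits $N$ choices, each subsequent $i_s$ must avoid the $q+1$ successors of $i_{s-1}$ (giving $q^2$ choices), $j_1$ avoids only $i_n$ (giving $N-1$ choices), and each $j_{r+1}$ avoids the $q+1$ predecessors of $j_r$ (giving $q^2$ choices).

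For part (iii), when $\Delta_T$ is the Bruhat--Tits building of a commutative local field $\KK$, the group $\PGL(3,\KK)$ sits inside $G = \AutTr(\Delta_T)$, and $\PGL(3,\cO)$ sits inside $K$. By the Cartan decomposition, $\PGL(3,\cO)$-orbits on the vertex set of $\Delta_\KK$ are parameterized by dominant cocharacters; one checks that $S_{n,m}$ corresponds to a single such orbit, so the transitivity hypotheses of (i) and (ii) hold for all $n,m \geq 0$. The 2-transitivity on $S_{1,0}$ follows because $S_{1,0}$ identifies with the lines in $\cO^3/\varpi\cO^3 \cong \FF_q^3$, on which $\PGL(3,\cO)$ acts through its quotient $\PGL(3,\FF_q)$, and the latter is 2-transitive. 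The main obstacle is establishing, in parts (i) and (ii), that $\Rep(\GG_T)$-intertwiners can be identified with $K$-invariant functions on path pairs; this relies on the precise structure of $\mathbf{F}$ developed earlier in the paper. Once this identification is in hand, the orbit counts and cardinality computations are elementary.
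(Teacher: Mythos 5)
Your overall strategy via the monoidal functor $\bF_T^G$ is the same as the paper's. Your treatment of part~(i) is in fact a valid and slightly slicker shortcut than the paper's: you bound $\dim(u\bar u,u\bar u)\leq 3$ directly by counting $K$-orbits on pairs of $(1,1)$-shaped paths with common endpoint (this counts endomorphisms in $\cC_f(K<G)$, and faithfulness of $\bF$ transfers the bound back), then invoke the lower bound $\geq 3$ from Lemma~\ref{thm:basic-intertwiners}. The paper instead derives~(i) from~(ii) by writing $u\bar u = \eps\oplus\alpha\oplus\cS_{1,1}$ and proving each summand irreducible. Your argument for~(iii) via the Cartan decomposition for $\PGL(3,\cO)$ is a reasonable alternative to the paper's citation of \cite{cms}, and the cardinality computation for $|S_{n,m}|$ is correct.

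There is a genuine gap in your part~(ii). You assert that the projection onto $\cS_{n,m}$ is ``automatically a $\Rep(\GG_T)$-intertwiner'' because it is the projection onto a direct summand of $\bF(u^{\ot n}\ot\bar u^{\ot m})$ in $\cC_f(K<G)$. Faithfulness of $\bF$ does not give this: a morphism in $\cC_f(K<G)$ between objects in the image of $\bF$ need not lie in the image of $\bF$ unless $\bF$ is \emph{full}, which the paper neither proves nor even believes---indeed Section~\ref{sec:classification} explicitly flags it as unknown whether a closely related endpoint-defined subspace of $u^{\ot 3n}$ is a $\GG_T$-subrepresentation at all, which strongly suggests $\bF$ is not full. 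The paper closes this gap before appealing to the functor: it shows directly that the orthogonal projection $S_{n,m}$ is a morphism in $(u^{\ot n}\ot\bar u^{\ot m},u^{\ot n}\ot\bar u^{\ot m})$ by writing it as the orthogonal complement of a join of shifted copies of the projections $R$, $Q$, $\bar R$, all of which are $\GG_T$-intertwiners by Lemma~\ref{thm:basic-intertwiners} (and which mutually commute, being diagonal in the standard basis). Only \emph{after} establishing that $\cS_{n,m}$ is a genuine $\GG_T$-invariant subspace does the paper use $\bF$ and $K$-transitivity on $S_{n,m}$ to conclude irreducibility. Your proof of~(ii) therefore needs the explicit verification that the endpoint projection lies in $\Rep(\GG_T)$; without it the argument does not go through.
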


To prove Proposition \ref{thm:generic-transitivity}, we make use of a monoidal functor from $\Rep(\GG_T)$ to the rigid $C^*$-tensor category associated in \cite[\S~2]{arano-vaes} to a locally compact group $G$ and a compact open subgroup $K$. This category $\mathcal{C}_f(K<G)$ of \textit{finite-rank} $L^\infty(G/K)$-$G$-$L^\infty(G/K)$-\textit{modules} consists of Hilbert spaces $\mathcal{H}$ equipped with a unitary representation $\pi$ of $G$ and an $L^\infty(G/K)$-$L^\infty(G/K)$-bimodule structure such that
\[
\pi(g)(f_1\cdot \xi \cdot f_2) = (gf_1)\cdot \pi(g)\xi \cdot (gf_2)\;,
\]
for all $g\in G$, $f_1,f_2\in L^\infty(G/K)$ and $\xi\in\mathcal{H}$, where $G$ acts on $L^\infty(G/K)$ by left translation. The morphisms from $\mathcal{H}$ to $\mathcal{K}$ are the bounded linear maps intertwining the $G$-representations and the $L^\infty(G/K)$-$L^\infty(G/K)$-bimodule structure. We say that $\mathcal{H}$ has \textit{finite rank} whenever $\chi_K\cdot\mathcal{H}$ (or equivalently, $\mathcal{H}\cdot \chi_K$) is finite-dimensional. Here, $\chi_K$ refers to the characteristic function of $K$ considered as an element of $L^\infty(G/K)$.

The tensor product of $\mathcal{H}$ and $\mathcal{K}$ in $\mathcal{C}_f(K<G)$ is the Hilbert space
\begin{align*}
\mathcal{H}\otimes_{L^\infty(G/K)}\mathcal{K} &=\{\xi\in\mathcal{H}\otimes\mathcal{K}\mid \forall gK \in G/K: \xi(\chi_{gK}\otimes  1) = (1\otimes \chi_{gK})\xi\}\\
        &\cong \bigoplus_{g\in G/K} \mathcal{H}\cdot \chi_{gK} \otimes \chi_{gK}\cdot \mathcal{K}\;,
\end{align*}
equipped with the unitary representation $\pi_{\mathcal{H}}\otimes\pi_{\mathcal{K}}$ of $G$, left $L^\infty(G/K)$-module structure induced from the one on $\mathcal{H}$ and right $L^\infty(G/K)$-module structure induced from the one on $\mathcal{K}$. The unit object is $L^2(G/K)$ with the obvious $L^\infty(G/K)$-$G$-$L^\infty(G/K)$-module structure. This turns $\mathcal{C}_f(K<G)$ into a $C^*$-tensor category. It is easy to see that $\mathcal{C}_f(K<G)$ is rigid and that, when $G$ is unimodular, the categorical dimensions are given by $d(\mathcal{H})=\dim(\chi_K\cdot\mathcal{H})$.

Observe that $\Rep(K)$ can be identified with the full $C^*$-tensor subcategory of $\mathcal{C}_f(K<G)$ consisting of those $L^\infty(G/K)$-$G$-$L^\infty(G/K)$-modules with the property that $\chi_{gK} \cdot \xi = \xi \cdot \chi_{gK}$ for all $g \in G$.

As in \cite[\S~2]{arano-vaes}, note that the functor $\cH \mapsto \chi_K \cdot \cH$ identifies $\cC_f(K < G)$ with the category of finite-dimensional $K$-$L^\infty(G/K)$-modules, i.e.\ the category of finite-dimensional covariant representations of $K$ and $L^\infty(G/K)$, which can also be viewed as the category of finite-dimensional $*$-representations of the finite type~I von Neumann algebra $L^\infty(G/K) \rtimes K$.

Consider now the setup of Proposition \ref{thm:generic-transitivity}. Let $G < \AutTr(\Delta_T)$ be a closed subgroup containing $\Gamma_T$ and denote by $K = \Stab_G(e)$ the stabilizer of the vertex $e \in \Delta_T$. For all $n,m \geq 0$, denote by $P_{n,m}$ the set of paths in $\Delta_T$ of length $n+m$ consisting of $n$ positive edges followed by $m$ negative edges. Note that $\AutTr(\Delta_T)$ acts naturally on $P_{n,m}$ and that
\begin{equation}\label{eqn:bijection}
\begin{split}
\Gamma_T \times & F^n \times F^m \recht P_{n,m} : (g,i_1,\ldots,i_n,j_1,\ldots,j_m) \mapsto \\ & (g, ga_{i_1}, g a_{i_1} a_{i_2},\ldots,g a_{i_1} \cdots a_{i_n}, g a_{i_1} \cdots a_{i_n} a_{j_1}^{-1},\cdots, g a_{i_1} \cdots a_{i_n} a_{j_1}^{-1} \cdots a_{j_m}^{-1} )
\end{split}
\end{equation}
is a bijection inducing a unitary $\cU_{n,m} : \ell^2(\Gamma_T) \ot (\CC^F)^{\ot n} \ot (\CC^F)^{\ot m} \recht \ell^2(P_{n,m})$.

Consider the unitary representation of $G$ on $\ell^2(P_{n,m})$ given by the action $G \actson P_{n,m}$. Identifying $\Delta_T = G/K$, we turn $\ell^2(P_{n,m})$ into an $L^\infty(G/K)$-bimodule by using the source and target vertex of a path. In this way, $\ell^2(P_{n,m})$ is an object in $\cC_f(K < G)$.

The following result is a crucial tool to prove Proposition \ref{thm:generic-transitivity}.

\begin{proposition}\label{thm:repgt-functor}
Using the notation introduced above, there is a unique unitary monoidal functor $\bF_T^G$ from $\Rep(\GG_T)$ to $\cC_f(K < G)$ satisfying
\begin{enumerate}[(i)]
\item $\bF_T^G(u^{\ot n}) = \ell^2(P_{n,0})$,
\item for every $W \in (u^{\ot n},u^{\ot n'})$, we have $\bF_T^G(W) = \cU_{n,0} (1 \ot W) \cU_{n',0}^*$.
\end{enumerate}
Moreover, this functor is essentially surjective and satisfies
\begin{enumerate}[(i)]\setcounter{enumi}{2}
\item $\bF_T^G(u^{\ot n} \ot {\bar{u}}^{\ot m}) = \ell^2(P_{n,m})$,
\item for every $W \in (u^{\ot n} \ot {\bar{u}}^{\ot m},u^{\ot n'} \ot {\bar{u}}^{\ot m'})$, we have $\bF_T^G(W) = \cU_{n,m} (1 \ot W) \cU_{n',m'}^*$.
\end{enumerate}
\end{proposition}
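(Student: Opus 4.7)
Plan: The construction of $\bF_T^G$ relies on Woronowicz' Tannaka--Krein duality combined with an explicit description of the image data coming from the building. I would proceed in three stages, and the main obstacle will be essential surjectivity.

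First, set up the image of the fundamental representation. Take $\cH_u := \ell^2(P_{1,0})$ with the $L^\infty(G/K)$-$G$-$L^\infty(G/K)$-module structure in which the bimodule structure records the source and target of a positive edge and $G$ acts naturally. Then $\chi_K \cdot \cH_u$ has dimension $|F| = N$, so $\cH_u$ has categorical dimension $N$, matching $d(u)$ (using that $G$ is unimodular). Via the natural concatenation isomorphism $\cH_u \otimes_{L^\infty(G/K)} \cH_u \otimes_{L^\infty(G/K)} \cH_u \cong \ell^2(P_{3,0})$, define the morphism
\[
\bar E : L^2(G/K) \to \ell^2(P_{3,0}), \qquad \delta_{gK} \mapsto \sum_{(i,j,k)\in T} \delta_{(g,\, g a_i,\, g a_i a_j,\, g)},
\]
where the path closes at $g$ because $a_i a_j a_k = e$ for $(i,j,k) \in T$. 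A direct check shows $\bar E$ is $G$-equivariant and $L^\infty(G/K)$-bilinear, and a counting argument invoking condition~(iv) of Definition~\ref{def:triangle-pres} yields the key algebraic relations
\[
(\bar E^* \otimes 1)(1 \otimes \bar E) = (q+1)\, \id_{\cH_u} = (1 \otimes \bar E^*)(\bar E \otimes 1) \qquad \text{and} \qquad \bar E^* \bar E = (q+1) N \cdot \id_{L^2(G/K)},
\]
matching the identities satisfied by $E$ inside $\Rep(\GG_T)$.

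Next, invoke universality. By the Tannaka--Krein duality of \cite{woron-tkdual}, $\GG_T$ is characterized by the universal property that unitaries $v \in B(\CC^F) \otimes A$ satisfying the triangle relation \eqref{eqn:woron-twisted-unimodularity} correspond to unital $*$-homomorphisms $C(\GG_T) \to A$. Categorically, this translates to the statement that a unitary monoidal functor from $\Rep(\GG_T)$ into a rigid $C^*$-tensor category $\mathcal{D}$ corresponds precisely to choosing an object $X \in \mathcal{D}$ with $d(X) = N$ together with a morphism $\eps \to X^{\otimes 3}$ obeying the same relations as $E$. Applying this to the pair $(\cH_u, \bar E) \in \cC_f(K<G)$ yields the desired functor $\bF_T^G$ with $\bF_T^G(u) = \cH_u$ and $\bF_T^G(E) = \bar E$. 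Properties (i) and (ii) follow by monoidality and the iterated concatenation isomorphism $\cH_u^{\otimes_{L^\infty(G/K)} n} \cong \ell^2(P_{n,0})$. For (iii) and (iv), use that the decomposition $uu \cong u_{20} \oplus \bar u$ implemented by $Y$ (recorded in Lemma~\ref{lem:compact-quantum} and in Section~\ref{sec:triag-pres-laplacian}) transfers under $\bF_T^G$ to an embedding of $\ell^2(P_{0,1})$ into $\ell^2(P_{2,0})$, identifying $\bF_T^G(\bar u) \cong \ell^2(P_{0,1})$; iteration then gives $\bF_T^G(u^{\otimes n} \otimes \bar u^{\otimes m}) \cong \ell^2(P_{n,m})$.

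Finally, establish essential surjectivity. Any $\cH \in \cC_f(K<G)$ is determined up to isomorphism by the finite-dimensional $(K, L^\infty(G/K))$-bimodule $\chi_K \cdot \cH$, whose right-module decomposition $\bigoplus_{gK} \chi_K \cdot \cH \cdot \chi_{gK}$ is supported on finitely many $K$-orbits in $G/K$. Because $G = \Gamma_T K$ with $\Gamma_T \cap K = \{e\}$, every $K$-orbit contains a representative $\gamma \in \Gamma_T$, and the reduced word description from Section~\ref{sec:triag-pres-building} places $\gamma$ in some $S_{n,m}$. The summand of $\ell^2(P_{n,m}) = \bF_T^G(u^{\otimes n} \otimes \bar u^{\otimes m})$ supported on the $K$-orbit of $\gamma$ carries the permutation representation of the stabilizer $K \cap \gamma K \gamma^{-1}$ on paths from $e$ to $\gamma$ of signature $(n,m)$, and all remaining irreducible stabilizer representations must be extracted as subobjects of further tensor products with $\ell^2(P_{n',m'})$. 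The hard part is Stage~3: the first two stages reduce to direct computation plus an application of Tannaka--Krein, but proving essential surjectivity demands showing that every irreducible representation of every stabilizer $K \cap \gamma K \gamma^{-1}$ actually arises inside some image $\ell^2(P_{n,m})$ (or a tensor composition thereof). This is a genuine richness statement about stabilizer actions on path fibres and will likely have to be handled through an induction on path length that exploits the combinatorial structure of the building and transitivity hypotheses of the kind appearing in Proposition~\ref{thm:generic-transitivity}.
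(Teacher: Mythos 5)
Your Stage~2 has a genuine gap. You invoke a ``universal property'' asserting that a unitary monoidal functor from $\Rep(\GG_T)$ into any rigid $C^*$-tensor category $\cD$ corresponds to a choice of object $X$ of dimension $N$ and a morphism $\eps\to X^{\ot 3}$ ``obeying the same relations as $E$'', and then verify only a short list of relations for $\bar E$. But $\Rep(\GG_T)$ is defined via its concrete realization on $(\CC^N)^{\ot n}$: the intertwiner spaces $(u^{\ot n},u^{\ot m})$ are spanned by words in $1^{\ot a}\ot E\ot 1^{\ot b}$ and adjoints, and a priori these words satisfy an unlisted (potentially infinite) family of linear dependencies specific to $E$. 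The quoted version of Tannaka--Krein duality concerns fiber functors to $\Hilb_f$ (equivalently unitaries in $B(\CC^F)\ot A$), not monoidal functors to an arbitrary $\cD$, and does not give the free-generators-and-relations presentation you rely on. In particular the three relations you check are far from exhaustive: for instance the Hecke-type relation \eqref{eqn:heckeish-relation} involving $Q$ is a nontrivial constraint in $(uuu,uuu)$ not captured by your list. The paper avoids the issue entirely by \emph{defining} $\bF_T^G$ on morphisms by conjugation, $\bF_T^G(W)=\cU_{n,0}(1\ot W)\cU_{n',0}^*$, which is automatically linear, multiplicative, $*$-preserving, and monoidal; the only thing to verify is that the image lies in the morphism space of $\cC_f(K<G)$, and by Remark~\ref{rem:Tannaka-point-of-view} this reduces to the generators $E_{i,j}$, for which the paper gives a clean geometric description (inserting a positive loop of length~$3$ at position $i$ of a positive path). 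This is the missing step in your argument, not merely a different route.

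Your Stage~3 is also not a proof: you explicitly flag essential surjectivity as ``the hard part'' and outline an induction ``exploiting transitivity hypotheses of the kind appearing in Proposition~\ref{thm:generic-transitivity}''. Note, however, that the present proposition makes no Bruhat--Tits or transitivity assumption on $\Delta_T$, so those hypotheses are not available; any argument resting on them would prove a weaker statement. The paper's argument is in fact short and uses no such hypotheses: since $\ell^2(P_{n,0})$ is in the image, it suffices that for each $g\in G$ every irreducible representation of $K_g=K\cap gKg^{-1}$ appear in $\chi_K\cdot\ell^2(P_{n,0})\cdot\chi_{gK}$ for some $n$, which in turn reduces to faithfulness of the $K_g$-action on the set of positive paths from $e$ to $g$; that faithfulness holds because any $h\in\Delta_T$ lies on some positive path from $e$ to $g$ and the action of $K_g$ on $\Delta_T$ is faithful by definition of $\AutTr(\Delta_T)$.
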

\begin{proof}
Write $\cH_n = \ell^2(P_{n,0})$. Note that there is a natural isomorphism of $L^\infty(G/K)$-$G$-$L^\infty(G/K)$-modules $\cH_n \ot_{L^\infty(G/K)} \cH_m = \cH_{n+m}$. To uniquely define the functor $\bF_T^G$, the only nontrivial statement to prove is that $\cU_{n,0} (1 \ot W) \cU_{n',0}^*$ is a morphism in $\cC_f(K < G)$ for all $W \in (u^{\ot n},u^{\ot n'})$. By Remark \ref{rem:Tannaka-point-of-view}, it is sufficient to prove that
$$E_{i,j} := \cU_{i+j+3,0}(1 \ot (1^{\ot i} \ot E \ot 1^{\ot j})) \cU_{i+j,0}^*$$
is a morphism in $\cC_f(K < G)$ for all $i,j \geq 0$. But this operator $E_{i,j}$ from $\cH_{i+j}$ to $\cH_{i+j+3}$ maps the basis vector $e_p$ given by a positive path $p = (g_0,\ldots,g_{i+j})$ of length $i+j$ to the sum of the basis vectors $e_q$ where $q$ runs over all positive paths that arise from $p$ by inserting a positive loop of length $3$ at position $i$. More precisely, the sum runs over all positive paths $q = (h_0,\ldots,h_{i+j+3})$ of length $i+j+3$ satisfying $h_k = g_k$ for all $0 \leq k \leq i$ and $h_k = g_{k-3}$ for all $i+3 \leq k \leq i+j+3$. This geometric description shows that $E_{i,j}$ intertwines the unitary representations of $\AutTr(\Delta_T)$ on $\cH_{i+j}$ and $\cH_{i+j+3}$. Since $E_{i,j}$ also is an $L^\infty(G/K)$-bimodule map, we have uniquely defined $\bF_T^G$.

Using the vectors $\xi_i$ defined in \eqref{eqn:vector-xi}, we defined the isometric intertwiner $Y \in (uu,\bar{u})$ given by $Y e_i = (q+1)^{-1/2} \xi_i$. We similarly have an isometric intertwiner from $\ell^2(P_{0,1})$ to $\ell^2(P_{2,0})$ given by
$$X e_{a_i^{-1}} = (q+1)^{-1/2} \sum_{j,k , (i,j,k) \in T} e_{a_j a_k} \; .$$
We have $\bF_T^G(YY^*) = XX^*$, so that $\bF_T^G$ also satisfies the extra properties in (iii) and (iv).

Since $\ell^2(P_{n,0})$ belongs to the image of $\bF_T^G$, to prove the essential surjectivity of $\bF_T^G$, it suffices to prove that for every $g \in G$, every irreducible representation of $K_g := K \cap g K g^{-1}$ appears as a subrepresentation of $K_g \actson \chi_K \cdot \ell^2(P_{n,0}) \cdot \chi_{gK}$ for some $n \in \NN$. So, it suffices to prove that the action of $K_g$ on the set of positive paths from $e$ to $g \in \Delta_T$ is faithful. Since for any $h \in \Delta_T$, there exists a positive path from $e$ to $g$ that passes through $h$, this result follows because by definition, the action of $K_g$ on $\Delta_T$ is faithful.
\end{proof}

We are now ready to prove Proposition \ref{thm:generic-transitivity}.

\begin{proof}[Proof of Proposition \ref{thm:generic-transitivity}]
Throughout the proof, we make use of the basic intertwiners introduced in Lemma \ref{thm:basic-intertwiners}. In particular, we denote by $R \in (uu,uu)$ the orthogonal projection of $\CC^N \ot \CC^N$ onto the subspace spanned by $e_{ij}, i \to j$. We also define $\bar{R} \in (\bar{u} \bar{u},\bar{u}\bar{u})$ as the projection of $\CC^N \ot \CC^N$ onto the subspace spanned by $e_{ij}, j \to i$. Slightly modifying the notation, we denote by $Q \in (u\bar{u},u\bar{u})$ the projection of $\CC^N \ot \CC^N$ onto the subspace spanned by $e_i \ot e_i$. As in the formulation of the proposition, we write $G = \AutTr(\Delta_T)$ and define $K < G$ as the stabilizer of the origin $e \in \Delta_T$. Denote by $\bF_T^G$ the monoidal functor introduced in Proposition \ref{thm:repgt-functor}.

We start by proving (ii). With some abuse of notation, denote by $S_{n,m}$ the orthogonal projection of $(\CC^N)^{\ot n} \ot (\CC^N)^{\ot m}$ onto the linear span $\cS_{n,m}$ of the vectors
$$\{e_{i_1 \cdots i_n} \ot e_{j_1 \cdots j_m} \mid a_{i_1}\cdots a_{i_n} a_{j_1}^{-1} \cdots a_{j_m}^{-1} \in S_{n,m}\} \; .$$
For $n,m \geq 2$, we have that
\begin{align*}
S_{n,m} = 1 - & \bigvee_{0 \leq a \leq n-2} (1^{\ot a} \ot R \ot 1^{\ot (n+m-a-2)}) \;\; \vee \;\; (1^{\ot(n-1)} \ot Q \ot 1^{\ot (m-1)}) \;\; \vee \\
& \bigvee_{0 \leq b \leq m-2} (1^{\ot (n+b)} \ot \bar{R} \ot 1^{\ot (m-b-2)})\; .
\end{align*}
So, $S_{n,m} \in (u^{\ot n} \ot {\bar{u}}^{\ot m} , u^{\ot n}\ot {\bar{u}}^{\ot m})$ and $\cS_{n,m}$ is an invariant subspace of $u^{\ot n} \ot {\bar{u}}^{\ot m}$. Assume now that the action of $K$ on $S_{n,m}$ is transitive. We prove that $\cS_{n,m}$ is irreducible. It suffices to prove that $\bF_T^G(\cS_{n,m})$ is irreducible in $\cC_f(K < G)$. This means that we have to prove that $\chi_{K} \cdot \bF_T^G(\cS_{n,m})$
is an irreducible $K$-$L^\infty(G/K)$-module. We identify $\chi_{K} \cdot \bF_T^G(\cS_{n,m})$ with the $\ell^2$-space of the set of geodesics in $\Delta_T$ of length $n+m$ starting with $n$ positive edges, followed by $m$ negative edges. As explained above, these geodesics are entirely determined by their end point $g \in S_{n,m}$. So, we have identified $\chi_{K} \cdot \bF_T^G(\cS_{n,m})$ with the $K$-$L^\infty(G/K)$-module $\ell^2(S_{n,m})$. An endomorphism of this $K$-$L^\infty(G/K)$-module is given by a $K$-invariant function in $\ell^\infty(S_{n,m})$. By the transitivity of $K \actson S_{n,m}$, the irreducibility follows.

We now prove (i). By part (a) of Theorem \ref{thm:repgt-main-theorem}, it suffices to prove that $\dim (u \bar{u},u \bar{u}) = 3$. Using the notation of part (ii) proved above, we write $\CC^N \ot \CC^N = \cL \oplus \cS_{1,1}$, where $\cL$ is the linear span of $\{e_i \ot e_i \mid i \in F\}$. By part (ii), the invariant subspace $\cS_{1,1}$ is an irreducible representation of dimension $N (N-1)$. By definition, $\cL$ contains the trivial representation. It suffices to prove that its orthogonal complement in $\cL$, which has dimension $N-1$, is irreducible. So it suffices to prove that $\bF_T^G(\cL)$ splits in $\cC_f(K < G)$ as the sum of the trivial module and an irreducible module of dimension $N-1$. Note that $\chi_K \cdot \bF_T^G(\cL)$ can be identified with the $\ell^2$-space of the paths of length $2$ of the form $(e,a,e)$ with $a \in F$. To conclude the proof of (i), we have to prove that the unitary representation of $K$ on $\ell^2(F)$ is the sum of the trivial representation and an irreducible representation. Since by hypothesis, the action $K \actson F = S_{1,0}$ is $2$-transitive, the conclusion follows.

Proof of (iii). Assume that $\Delta_T$ is the Bruhat--Tits $\tilde{A}_2$-building of a commutative local field $\KK$ with ring of integers $\cO$ and residue field $\FF_q$. By Remark~1 in \cite[p.~242]{cms}, the action of $K$ on $S_{n,m}$ is transitive for all $n,m \geq 0$. Also, $\PGL(3,\cO) \hookrightarrow K$ and $F$ can identified with the projective plane $\PG(2,\FF_q)$ of $\FF_q$ in such a way that the action of $\PGL(3,\cO)$ on $F$ is given by composing the quotient homomorphism $\PGL(3,\cO) \recht \PGL(3,\FF_q)$ with the natural action of $\PGL(3,\FF_q)$ on $\PG(2,\FF_q)$, which is $2$-transitive. This concludes the proof of the proposition.
\end{proof}

\section{\boldmath Classification, categories \texorpdfstring{$\mathcal{C}_f(K<G)$}{C\_f(K<G)} and bicrossed products}\label{sec:classification}

To every triangle presentation $T$ of order $q \geq 2$, we associated in Definition \ref{def:G-T} the compact quantum group $\GG_T$. As we recalled in Section \ref{sec.triangle-pres} from \cite{cmsz1}, we denote by $\Gamma_T$ the discrete group whose presentation is given by $T$ and by $\Delta_T$ its Cayley graph, which is an $\tilde{A}_2$-building. By construction, $\Gamma_T$ acts simply transitively on $\Delta_T$. As in Section \ref{sec:triag-pres-building}, we denote by $\AutTr(\Delta_T)$ the group of type-rotating automorphisms of $\Delta_T$. For every closed subgroup $G < \AutTr(\Delta_T)$ containing $\Gamma_T$ and denoting by $K = \Stab_G(e)$ the stabilizer of the vertex $e \in \Delta_T$, we constructed in Proposition \ref{thm:repgt-functor} the monoidal functor $\bF : \Rep(\GG_T) \recht \cC_f(K < G)$. This functor played an essential role in the proof of part (b) of Theorem \ref{thm:repgt-main-theorem}.

Since the action of $\Gamma_T$ on $\Delta_T$ is simply transitive, we have $G = \Gamma_T K$ and $\Gamma_T \cap K = \{e\}$. So, $G = \Gamma_T K$ is a \emph{matched pair} of the discrete group $\Gamma_T$ and the compact group $K$, in the sense of \cite{Kac,baaj-skandalis,vaes-vainerman,fima-bicrossed}. The \emph{bicrossed product construction} associates to the matched pair $G = \Gamma_T K$ a compact quantum group $\HH$ with underlying von Neumann algebra $L^\infty(\HH) = L^\infty(G/\Gamma_T) \rtimes \Gamma_T$ and dual von Neumann algebra $\ell^\infty(\hat{\HH}) = \ell^\infty(G/K) \rtimes K$. In Proposition \ref{thm:kac-type-ff}, we prove that $\cC_f(K < G)$ precisely is the representation category $\Rep(\HH)$.

This means that we have natural quotient morphisms of discrete quantum groups
\begin{equation}\label{eqn:quotients}
\widehat{\GG_T} \recht \widehat{\HH} \recht \Gamma_T \; .
\end{equation}

When $T$ satisfies the hypotheses of Theorem \ref{thm:repgt-main-theorem}, the discrete quantum group $\widehat{\GG_T}$ has property~(T). Hence also its quotient $\widehat{\HH}$ has property~(T). This follows from \cite[Proposition 9.6]{pv-repr-subfactors} and because property~(T) of $\Rep(\HH)$ is equivalent with property~(T) for $\widehat{\HH}$ in the Kac case. So triangle presentations also give rise to numerous bicrossed products with property~(T). They are quite different from the examples in \cite{fima-bicrossed}, which are given by matched pairs of a countable group $\Gamma$ and a finite group $K$ and hence give discrete quantum groups that are commensurable with the group $\Gamma$.

More generally, for any locally compact group $G$ with compact open subgroup $K < G$, we associate in Proposition \ref{thm:kac-type-ff} a fiber functor $\bF : \cC_f(K < G) \recht \Hilb_f$ to any subgroup $\Gamma < G$ satisfying $G = \Gamma K$ and $\Gamma \cap K = \{e\}$. We prove that the compact quantum group associated to this fiber functor by Woronowicz' Tannaka--Krein theorem is the bicrossed product of the matched pair $G = \Gamma K$.

This brings up the interesting open question whether the converse holds (see Remark \ref{rem:question-fiber-functor}): does the existence of a fiber functor on $\cC_f(K < G)$ imply the existence of a subgroup $\Gamma < G$ satisfying $G = \Gamma K$ and $\Gamma \cap K = \{e\}$~?

We then return to triangle presentations $T$ of order $q \geq 2$ for which the associated $\tilde{A}_2$-building is Bruhat--Tits. Associated to this data, we have the compact quantum group $\GG_T$ and the compact quantum group $\HH_T$ given by the matched pair $G_T = \Gamma_T K_T$ where $G_T = \AutTr(\Delta_T)$ and $K_T = \Stab_{G_T}(e)$. In Theorem \ref{thm:classif-compact}, we classify these compact quantum groups $\GG_T$ and $\HH_T$ up to isomorphism and also, partially, up to monoidal equivalence.

Using the classification of triangle presentations of order $q=2$ or $3$ in \cite{cmsz1,cmsz2}, Theorem \ref{thm:classif-compact} provides numerous new examples of nonisomorphic, monoidally equivalent compact quantum groups. All their duals are discrete quantum groups with property~(T).

It follows in particular from Theorem \ref{thm:classif-compact} that for the triangle presentations $T_1$ and $T_2$ in Table~\ref{tbl:triang-pres-examples}, the tensor categories $\Rep(\HH_{T_i}) = \cC_f(K_{T_i} < G_{T_i})$ are not monoidally equivalent. As we explain in Remark \ref{rem:indication-mon-equiv} below, we conjecture that the representation categories $\Rep(\GG_{T_i})$ are monoidally equivalent and we provide numerical evidence for this conjecture. Altogether, it is thus very unlikely that the functors $\bF_i : \Rep(\GG_{T_i}) \recht \cC_f(K_{T_i} < G_{T_i})$ are equivalences of categories. We were however unable to prove this. Given a triangle presentation $T$ of order $q \geq 2$, it is even unclear whether the linear span of $\{e_{i_1 \cdots i_n} \mid a_{i_1} \cdots a_{i_{3n}} = e\}$ is an invariant subspace of the representation $u^{\ot 3n}$ of $\GG_T$. Its image under $\bF$ would be the natural $L^\infty(G/K)$-$G$-$L^\infty(G/K)$-module defined as the $\ell^2$-space of all \emph{closed} positive paths of length $3n$ in $\Delta_T$.

Let $G$ be a locally compact group with compact open subgroup $K < G$. The tensor category $\mathcal{C}_f(K<G)$ comes with the natural faithful unitary functor to the category of finite-dimensional Hilbert spaces mapping a finite-rank $L^\infty(G/K)$-$G$-$L^\infty(G/K)$-module $\mathcal{H}$ to the finite-dimensional Hilbert space $\chi_K\cdot\mathcal{H}$. In general, there is no canonical way to turn this functor into a tensor functor, but in the presence of a subgroup $\Gamma < G$ satisfying $G = \Gamma K$ and $\Gamma \cap K = \{e\}$, this can be done as follows.

\begin{proposition}\label{thm:kac-type-ff}
Let $G$ be a locally compact group, and $K<G$ a compact open subgroup.
Let $\mathbf{F}$ be the functor from $\mathcal{C}_f(K<G)$ to $\mathrm{Hilb}_f$ that sends $\mathcal{H}$ to $\chi_K\cdot\mathcal{H}$ and acts on morphisms by restriction.
Given a subgroup $\Gamma< G$ such that $G=\Gamma K$ with $K\cap \Gamma=\{e\}$, the natural unitary
\begin{equation}\label{eqn:kac-type-ff-unitary}
        \chi_K\cdot (\mathcal{H}\otimes_{L^\infty(G/K)} \mathcal{K})\to (\chi_K\cdot \mathcal{H}) \otimes (\chi_K\cdot \mathcal{K}): \xi\mapsto \sum_{\gamma\in \Gamma} (1\otimes \pi_{\mathcal{K}}(\gamma)^*\chi_{\gamma K})\xi
\end{equation}
turns $\mathbf{F}$ into a dimension-preserving fiber functor. The Kac type compact quantum group associated to this fiber functor is the bicrossed product of the matched pair $G = \Gamma K$.
\end{proposition}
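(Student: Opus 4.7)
The plan is to prove the statement in three stages: first, check that \eqref{eqn:kac-type-ff-unitary} assembles $\mathbf{F}$ into a monoidal functor; second, verify the remaining fiber functor axioms; and third, identify the Tannaka--Krein reconstruction with the bicrossed product $\HH$ of the matched pair $G = \Gamma K$.

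For the first stage, I would start from the canonical decomposition
\[
\chi_K \cdot (\mathcal{H} \otimes_{L^\infty(G/K)} \mathcal{K}) = \bigoplus_{\gamma \in \Gamma} (\chi_K \cdot \mathcal{H} \cdot \chi_{\gamma K}) \otimes (\chi_{\gamma K} \cdot \mathcal{K}) \; ,
\]
which is available because $G = \Gamma K$ and $\Gamma \cap K = \{e\}$ make $\Gamma$ a set of coset representatives for $G/K$. Since $\pi_{\mathcal{K}}(\gamma)^{*}$ restricts to a unitary from $\chi_{\gamma K} \cdot \mathcal{K}$ onto $\chi_K \cdot \mathcal{K}$, applying $1 \otimes \pi_{\mathcal{K}}(\gamma)^{*}$ summand-by-summand reproduces the formula \eqref{eqn:kac-type-ff-unitary}, with inverse $\eta \mapsto \sum_{\gamma \in \Gamma} (1 \otimes \pi_{\mathcal{K}}(\gamma))(\chi_{\gamma K} \otimes 1)\eta$. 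Naturality is then immediate from the fact that morphisms in $\mathcal{C}_f(K<G)$ commute with both $\pi$ and the $L^\infty(G/K)$-bimodule actions. Unit coherence is trivial because $\chi_K \cdot L^2(G/K)$ is one-dimensional, and associativity reduces on a summand indexed by $\gamma,\delta \in \Gamma$ to the identity $\pi_{\mathcal{L}}(\gamma \delta)^{*} = \pi_{\mathcal{L}}(\delta)^{*} \pi_{\mathcal{L}}(\gamma)^{*}$ combined with the uniqueness of the $\gamma K$-decomposition.

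For the second stage, faithfulness and unitarity of $\mathbf{F}$ on morphisms follow because the full $L^\infty(G/K)$-$G$-$L^\infty(G/K)$-bimodule structure on $\mathcal{H}$ can be reconstructed from $\chi_K \cdot \mathcal{H}$ by translating with $\pi(\gamma)$, $\gamma \in \Gamma$. For dimension preservation I would verify that the existence of a discrete complement $\Gamma$ of the compact open subgroup $K$ forces $G$ to be unimodular: a Haar measure on $G$ can be built from counting measure on $\Gamma$ and Haar measure on $K$ via $G = \Gamma K$, so that $d(\mathcal{H}) = \dim(\chi_K \cdot \mathcal{H}) = \dim \mathbf{F}(\mathcal{H})$.

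The third stage is the substantive part. I would use the standard description of $\Rep(\HH)$ for the bicrossed product of $G = \Gamma K$: writing the matched pair decomposition as $k\gamma = (k \triangleright \gamma)(k \triangleleft \gamma)$, objects of $\Rep(\HH)$ identify with finite-dimensional $\Gamma$-graded Hilbert spaces $V = \bigoplus_{\gamma \in \Gamma} V_\gamma$ equipped with a unitary representation $\rho : K \to \mathcal{U}(V)$ satisfying $\rho(k) V_\gamma = V_{k \triangleright \gamma}$, and the tensor product on $\Rep(\HH)$ combines convolution of $\Gamma$-gradings with the ordinary tensor product of $K$-representations. The functor $\mathbf{F}$ lands in exactly this picture: the $\Gamma$-grading on $\chi_K \cdot \mathcal{H}$ is the right $L^\infty(G/K)$-action, the $K$-representation is the restriction of $\pi$, and the identity $\pi(k)\chi_{\gamma K} = \chi_{k\gamma K} \pi(k)$ produces the grading shift by $k \triangleright \gamma$. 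A short computation then identifies \eqref{eqn:kac-type-ff-unitary} with the convolution tensor product of $\Gamma$-gradings, so that $(\mathcal{C}_f(K<G), \mathbf{F})$ matches $(\Rep(\HH), \mathrm{forget})$ as monoidal functors. Tannaka--Krein then reconstructs $\HH$. The main obstacle is purely bookkeeping: disentangling left-versus-right actions of $L^\infty(G/K)$ and fixing conventions for the direction of the matched-pair cocycles, so that the $K$-action on each component $\chi_K \cdot \mathcal{H} \cdot \chi_{\gamma K}$ is seen to realise $\triangleright$ and $\triangleleft$ correctly.
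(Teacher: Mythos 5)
Your proposal is correct and follows broadly the same strategy as the paper; the main difference is in the third stage, where you and the paper present the Tannaka--Krein identification from opposite sides of the duality. The paper works on the dual (discrete) side: it invokes the equivalence $\cC_f(K<G)\simeq \Rep_f(M)$ with $M=L^\infty(G/K)\rtimes K$ from \cite{arano-vaes}, reads off from \eqref{eqn:kac-type-ff-unitary} an explicit co-associative comultiplication $\Delta$ on $M$ given on $\chi_{\gamma K}$ by a convolution formula and on $u_k$ by a formula twisted through the matched-pair cocycle $\om(k,\gamma)$, and recognizes $(M,\Delta)$ as the dual of the bicrossed product. You instead work on the compact side: you describe $\Rep(\HH)$ directly as $\Gamma$-graded Hilbert spaces with a compatible $K$-representation, and match $\bF$ against this concrete model. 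Both routes are legitimate and amount to the same reconstruction. One small imprecision on your side: the tensor product on $\Rep(\HH)$ is \emph{not} the ordinary tensor product of $K$-representations composed with convolution of gradings -- the $K$-action on the second factor must be twisted by the matched-pair cocycle $\om(k,\gamma)$ exactly as in the formula for $\Delta(u_k)$ in the paper. You do flag this as a bookkeeping issue about $\triangleright$ and $\triangleleft$ at the end, so you are clearly aware of it, but the phrase ``ordinary tensor product'' should be corrected. Your more explicit verification of the monoidal-functor axioms in stage one (decomposition, inverse, unit, associativity) is slightly more detailed than the paper, which compresses this to a single identity to be checked, and your unimodularity argument via the cocompact lattice $\Gamma$ matches the paper's.
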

\begin{proof}
Since $\Gamma$ is a cocompact lattice in $G$, the group $G$ is unimodular. The condition on the position of $\Gamma$ in $G$ implies that $\Gamma\to G/K:\gamma\mapsto \gamma K$ is a bijection.
Verifying that \eqref{eqn:kac-type-ff-unitary} gives $\mathbf{F}$ the structure of a monoidal functor then boils down to the identity
\begin{align*}
\sum_{\gamma,\gamma'\in \Gamma} &(1\otimes \pi_{\mathcal{K}}(\gamma)^*\otimes \pi_{\mathcal{L}}(\gamma\gamma')^*)(1\otimes\chi_{\gamma K}\otimes \chi_{\gamma\gamma' K})\xi \numberthis\label{eqn:kac-type-ff-unitary-verif}\\
&= \sum_{\gamma,\gamma'\in \Gamma} (1\otimes \pi_{\mathcal{K}}(\gamma)^*\otimes \pi_{\mathcal{L}}(\gamma')^*)(1\otimes\chi_{\gamma K}\otimes \chi_{\gamma' K})\xi\;.
\end{align*}
So, $\bF$ is a fiber functor, which is dimension preserving because $G$ is unimodular.

As in \cite[Proposition 2.2]{arano-vaes}, the map $\cH \recht \chi_K \cdot \cH$ identifies $\mathcal{C}_f(K<G)$ with the category of finite-dimensional $K$-$L^\infty(G/K)$-modules, i.e.\ the category $\Rep_f(M)$ of finite-dimensional representations of the finite type I von Neumann algebra $M = L^\infty(G/K) \rtimes K$. The monoidal structure in \eqref{eqn:kac-type-ff-unitary} then translates into the co-associative $*$-homomorphism $\Delta : M \recht M \ovt M$ given by
\begin{equation}\label{eqn:comult-bicrossed}
\begin{split}
&\Delta(\chi_{\gamma K}) = \sum_{\gamma' \in \Gamma} \chi_{\gamma' K} \ot \chi_{{\gamma'}^{-1} \gamma K} \; ,\\
&\Delta(u_k) = \sum_{\gamma \in \Gamma} u_k \chi_{\gamma K} \ot u_{\om(k,\gamma)} \; ,
\end{split}
\end{equation}
where for each $k \in K$ and $\gamma \in \Gamma$, we denote by $\om(k,\gamma)$ the unique element in $K$ such that $k \gamma \in \Gamma \om(k,\gamma)$.

Denoting by $\HH$ the compact quantum group arising as the bicrossed product of the matched pair $G = \Gamma K$, we see that its dual discrete quantum group is identical to $(M,\Delta)$. This concludes the proof of the proposition.
\end{proof}

\begin{remark}\label{rem:question-fiber-functor}
Let $G$ be a locally compact group with compact open subgroup $K < G$. Does the converse of Proposition \ref{thm:kac-type-ff} hold? More precisely, does the existence of a fiber functor on $\cC_f(K < G)$ force the existence of a subgroup $\Gamma < G$ satisfying $G = \Gamma K$ and $\Gamma \cap K = \{e\}$. Note that the existence of such a subgroup $\Gamma$ forces $G$ to be unimodular. In particular, we do not know whether for non-unimodular $G$, the tensor category $\cC_f(K < G)$ can ever admit a fiber functor.

When $G$ is unimodular and $K < G$ is a compact open subgroup, the question of the existence of a \emph{dimension preserving} fiber functor on $\cC_f(K < G)$ can be rephrased as the following $3$-cohomology problem. Define the finite type I von Neumann algebra $M = L^\infty(G/K) \rtimes K$. As in the proof of Proposition \ref{thm:kac-type-ff}, we can view $\cC_f(K < G)$ as the category $\Rep_f(M)$ of finite-dimensional representations of $M$. Choosing a section $\theta : G/K \recht G$, we define the $1$-cocycle $\om : G \times G/K \recht K$ such that
$$g \, \theta(hK) = \theta(ghK) \, \om(g,hK) \quad\text{for all}\;\; g \in G, hK \in G/K \; .$$
We can then define the normal unital $*$-homomorphism $\Delta : M \recht M \ovt M$ given by
\begin{equation*}
\begin{split}
&\Delta(\chi_{gK}) = \sum_{hK \in G/K} \chi_{hK} \ot \chi_{\theta(hK)^{-1} gK} \; ,\\
&\Delta(u_k) = \sum_{gK \in G/K} u_k \chi_{gK} \ot u_{\om(k,gK)} \; .
\end{split}
\end{equation*}
By construction, under the identification of $\cC_f(K < G)$ with $\Rep_f(M)$, the tensor product of $\pi_1,\pi_2 \in \Rep_f(M)$ becomes $(\pi_1 \ot \pi_2) \circ \Delta$. Although the tensor product in $\cC_f(K < G)$ is naturally associative, this does not imply that $\Delta$ is co-associative. We rather find that
$$(\Delta \ot \id) \circ \Delta = (\Ad A) \circ (\id \ot \Delta) \circ \Delta$$
where $A \in \cU(M \ovt M \ovt M)$ is the unitary \emph{co-associator} given by
$$A = \sum_{gK,hK \in G/K} \chi_{gK} \ot \chi_{hK} \ot u_{\om(\theta(gK),hK)} \; .$$
The unitary $A$ satisfies a $3$-cocycle relation. The question whether there exists a dimension preserving fiber functor on $\cC_f(K < G)$ is now equivalent with the question whether $A$ is a coboundary, i.e.\ whether there exists a unitary $\Om \in \cU(M \ovt M)$ satisfying
\begin{equation}\label{eqn:coboundary}
A = (\Delta \ot \id)(\Om)^* \, (\Om^* \ot 1) \, (1 \ot \Om) \, (\id \ot \Delta)(\Om) \; .
\end{equation}
The existence of a subgroup $\Gamma < G$ with $\Gamma \cap K = \{e\}$ and $\Gamma K = G$ is equivalent with the existence of a map $\rho : G/K \recht K$ such that the new section $\theta' : G/K \recht G$ given by $\theta'(gK) = \theta(gK) \, \rho(gK)$ has the property that the image of $\theta'$ is a subgroup of $G$. In that case, the unitary
$$\Om = \sum_{gK \in G/K} \chi_{gK} \ot u_{\rho(gK)}^*$$
satisfies \eqref{eqn:coboundary}.
\end{remark}

\begin{remark}
As explained after \eqref{eqn:quotients}, triangle presentations give rise to numerous bicrossed product quantum groups with property~(T). In the Bruhat--Tits case, all this can be made more concrete. So let $\KK$ be a commutative local field with ring of integers $\cO$. Put $G = \PGL(3,\KK)$ and $K = \PGL(3,\cO)$. Let $\Gamma < G$ be a subgroup such that $G = \Gamma K$ and $\Gamma \cap K = \{e\}$. The compact bicrossed product $\HH$ associated with the matched pair $G = \Gamma K$ has underlying von Neumann algebra $L^\infty(G/\Gamma) \rtimes \Gamma$, which has property~(T) by \cite[Theorem 1.3]{bouljihad} and because $\Gamma$ is a property~(T) group. The representation category $\Rep(\HH)$ is equivalent with $\cC_f(K < G)$, which has property~(T) by \cite[Proposition 4.2]{arano-vaes}. Finally, by \cite[Theorem 3.1]{cmsz1} and because $\Gamma$ can be viewed as acting simply transitively on the building $G/K$, there is a canonically defined triangle presentation $T$ such that $\Gamma = \Gamma_T$. Then, $\widehat{\GG_T}$ has property~(T) by Theorem \ref{thm:repgt-main-theorem} and so does its quotient $\widehat{\HH}$. Altogether, this provides three quite different proofs of property~(T) for $\widehat{\HH}$.
\end{remark}

We now state and prove a classification theorem for the compact quantum groups associated with a triangle presentation. For the formulation of this result, note that for every triangle presentation $T$ on a base set $F$, the \emph{opposite} $\{(a,b,c) \in F^3 \mid (c,b,a) \in T\}$ also is a triangle presentation. We say that two triangle presentations $T$ on $F$ and $T'$ on $F'$ are isomorphic if there exists a bijection $\al : F \recht F'$ satisfying $(\al \times \al \times \al)(T) = T'$.

\begin{theorem}\label{thm:classif-compact}
For $i=1,2$, let $T_i$ be triangle presentations of order $q_i \geq 2$ with associated compact quantum group $\GG_i := \GG_{T_i}$ and triangle group $\Gamma_i$. Assume that the buildings $\Delta_{T_i}$ are the Bruhat--Tits $\tilde{A}_2$-building of a commutative local field $\KK_i$. Denote by $\HH_i$ the compact bicrossed product given by the matched pair $G_i = \Gamma_i K_i$, where $G_i = \AutTr(\Delta_{T_i})$ and $K_i < G_i$ is the stabilizer of the origin $e \in \Delta_{T_i}$.
\begin{enumerate}[(i)]
\item The compact quantum groups $\HH_1$ and $\HH_2$ are monoidally equivalent if and only if the local fields $\KK_1$ and $\KK_2$ are isomorphic.
\item If the compact quantum groups $\GG_1$ and $\GG_2$ are monoidally equivalent, then $q_1 = q_2$. If the local fields $\KK_1$ and $\KK_2$ are isomorphic, then $\GG_1$ and $\GG_2$ are monoidally equivalent.
\item The following statements are equivalent.
\begin{enumerate}[a)]
\item The compact quantum groups $\GG_1$ and $\GG_2$ are isomorphic.
\item The compact quantum groups $\HH_1$ and $\HH_2$ are isomorphic.
\item We have $q_1 = q_2$ and the triangle presentation $T_1$ is isomorphic to $T_2$ or its opposite.
\end{enumerate}
\end{enumerate}
\end{theorem}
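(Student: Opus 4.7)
The plan is to combine direct constructions for the ``easy'' implications with an intrinsic recovery of combinatorial or geometric data from the quantum-group or categorical information. For c) $\Rightarrow$ a): a bijection $\alpha\colon F_1 \to F_2$ satisfying $(\alpha \times \alpha \times \alpha)(T_1) = T_2$ extends to a Hopf $*$-isomorphism $C(\GG_{T_1}) \to C(\GG_{T_2})$ via $u_{ij} \mapsto u_{\alpha(i)\alpha(j)}$, directly preserving the relations in \eqref{eqn:woron-twisted-unimodularity}. If instead $T_1 \cong T_2^{\mathrm{op}}$, I would verify that $u_{ij} \mapsto u_{\alpha(j)\alpha(i)}^*$ carries the $T$-relations to the $T^{\mathrm{op}}$-relations, using that taking adjoints reverses the order of products $u_{ax}u_{by}u_{cz}$, so $C(\GG_T) \cong C(\GG_{T^{\mathrm{op}}})$. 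Both iso's restrict to an iso $\Gamma_{T_1} \to \Gamma_{T_2}$ compatible with canonical generating sets, hence iso of matched pairs $G_i = \Gamma_i K_i$ and thus $\HH_{T_1} \cong \HH_{T_2}$, giving c) $\Rightarrow$ b). For the remaining easy implications, $\KK_1 \cong \KK_2$ yields an iso of Bruhat--Tits $\tilde A_2$-buildings preserving BN-pair structure, hence iso of $(G_i, K_i)$ and consequently a monoidal equivalence $\cC_f(K_1 < G_1) \cong \cC_f(K_2 < G_2)$; the latter are identified with $\Rep(\HH_i)$. The conclusion $\GG_1 \cong_{\mathrm{mon}} \GG_2$ would follow because, by Proposition~\ref{thm:repgt-functor}, $\Rep(\GG_{T_i})$ is the subcategory of $\cC_f(K_i < G_i)$ generated by the canonical object $\ell^2(P_{1,0})$ together with the distinguished isometry $X$ coming from the triangle, and both data depend only on the building.

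For the hard direction of (iii), the core step is to identify the fundamental representation $u$ intrinsically in $\Rep(\GG_T)$ up to the symmetry $u \leftrightarrow \bar u$. Using the $\ZZ/3\ZZ$-grading of Lemma~\ref{lem:generating} together with the invariants $d(u) = q^2+q+1$ and $\dim(u\bar u, u\bar u) = 3$ from Theorem~\ref{thm:repgt-main-theorem}(b), the pair $\{u, \bar u\}$ is a monoidal invariant. A Hopf $*$-iso $\GG_{T_1} \cong \GG_{T_2}$ therefore sends the fundamental representation to $u_2$ or $\bar u_2$ up to unitary gauge; choosing an orthonormal basis yields a bijection $\alpha\colon F_1 \to F_2$ carrying the relations of $T_1$ to those of $T_2$ or of $T_2^{\mathrm{op}}$, giving c). For b) $\Rightarrow$ c), the iso $\HH_{T_1} \cong \HH_{T_2}$ pulls back via the bicrossed-product description \eqref{eqn:comult-bicrossed} to an iso of matched pairs $G_i = \Gamma_i K_i$, recovered by distinguishing the classical-group and dual-of-compact-group parts of the comultiplication structure; this gives iso $\Gamma_{T_1} \to \Gamma_{T_2}$ respecting the generators, and the same combinatorial conclusion follows.

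The first clause of (ii) is immediate from the previous paragraph: a monoidal equivalence preserves categorical dimensions of irreducibles, so $q_1^2+q_1 = d(u_1) = d(u_2) = q_2^2+q_2$ forces $q_1 = q_2$. For the ``only if'' direction of (i), given $\cC_f(K_1<G_1) \cong \cC_f(K_2<G_2)$ monoidally, I would recover the pair $(G_i, K_i)$ as follows: the simple objects correspond to double cosets $K \backslash G / K$, their fusion rules encode convolution in the Hecke algebra $\mathcal{H}(G, K)$, and (by unimodularity of $G$) categorical dimensions give coset cardinalities. This suffices to recover $(G, K)$ up to topological iso, and Tits' classification of $\tilde A_2$-Bruhat--Tits BN-pairs yields $\KK$ up to isomorphism. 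The main obstacle I expect is precisely this final recoverability step: showing that the Hecke-algebraic and dimension data in $\cC_f(K<G)$ really determines the locally compact pair $(G, K)$ will require a careful analysis, likely leveraging the $L^\infty(G/K)$-bimodule structure on morphism spaces beyond bare fusion rules. A secondary obstacle is the identification in the first paragraph of $\Rep(\GG_T)$ as a canonical subcategory of $\cC_f(K<G)$ independent of $T$, which likely requires establishing full faithfulness of the functor $\bF_T^G$ from Proposition~\ref{thm:repgt-functor}.
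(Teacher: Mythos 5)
Your plan for the easy implications is essentially the paper's. However, there are several genuine gaps in the hard directions, and in one place you propose a route that appears to be a dead end.

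\textbf{Gap 1 (forward direction of (ii)).} You propose to deduce $\GG_1 \sim_{\mathrm{mon}} \GG_2$ from $\KK_1 \cong \KK_2$ by identifying $\Rep(\GG_{T_i})$ as a canonical subcategory of $\cC_f(K_i < G_i)$. You acknowledge this requires full faithfulness of $\bF_T^G$, which you flag as a ``secondary obstacle.'' It is not secondary: the paper explicitly remarks (after Theorem~\ref{thm:classif-compact}) that it is \emph{very unlikely} that $\bF_T^G$ is an equivalence, and could not resolve this. The paper avoids this entirely: given $\KK_1 \cong \KK_2$, it produces a type-preserving isomorphism of buildings $\theta : \Delta_{T_1} \to \Delta_{T_2}$ fixing $e$ (after replacing $T_2$ by its opposite if necessary), observes that $\theta$ bijects the positive length-$n$ paths $L_{1,n} \to L_{2,n}$, and hence yields unitaries $\cU_n$ on $(\CC^N)^{\ot n}$. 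A direct check shows $\cU_{a+b+3}(1^{\ot a} \ot E_1 \ot 1^{\ot b})\cU_{a+b}^* = 1^{\ot a}\ot E_2\ot 1^{\ot b}$, which gives the monoidal equivalence without ever comparing the two ambient categories $\cC_f(K_i < G_i)$.

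\textbf{Gap 2 (a) $\Rightarrow$ c)).} You assert that once the pair $\{u,\bar u\}$ is known to be a monoidal invariant, ``choosing an orthonormal basis yields a bijection $\alpha : F_1 \to F_2$ carrying the relations of $T_1$ to $T_2$.'' This is the heart of the matter and does not follow from what you have. The point is that an isomorphism of compact quantum groups sending $u_1$ to $u_2$ is implemented by some unitary $w \in \cU(\CC^N)$, and a priori there is no reason $w$ should be a monomial (permutation-scaled) matrix. The paper's Lemma~\ref{lem:rigid-intertwiners} is the nontrivial combinatorial ingredient proving exactly this: if $(w\ot w\ot w)E_{T_1} = E_{T_2,\om}$ for some phase function $\om$, then $w$ \emph{must} be a monomial matrix, via a careful analysis of the projections $L_i = w(\id \ot \Om_{e_i,e_i})(P_1)w^*$ and a trace computation exploiting $q\geq 2$. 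Without this lemma the argument is incomplete. (Lemma~\ref{lem:play-with-dim} is also needed to put $E_1$ into the dichotomy form \eqref{eq:dichotomy-E1} before Lemma~\ref{lem:rigid-intertwiners} applies.)

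\textbf{Gap 3 (b) $\Rightarrow$ c)).} You propose to ``pull back'' an isomorphism $\HH_1 \cong \HH_2$ to an isomorphism of matched pairs $G_i = \Gamma_i K_i$ by distinguishing the group and dual-of-compact parts of the comultiplication. It is not clear this can be done: the bicrossed product depends on the matched pair, but in general an isomorphism of bicrossed products need not respect the matched pair decomposition. The paper sidesteps this by treating a) and b) uniformly: in either case it produces an $N$-dimensional representation $u$ of $\HH_2$ generating $\Rep(\HH_2)$, for which $E_1$ is an invariant vector of $u^{\ot 3}$, and then runs the same Lemma~\ref{lem:play-with-dim} / Lemma~\ref{lem:rigid-intertwiners} argument.

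\textbf{Gap 4 (converse of (i)).} Your Hecke-algebra proposal is speculative and probably does not work as stated: the simple objects of $\cC_f(K<G)$ are \emph{not} indexed by double cosets $K\backslash G/K$; they are finite-dimensional irreducible covariant representations of $L^\infty(G/K)\rtimes K$, i.e.\ pairs (double coset, irrep of a stabilizer). The paper's route is quite different: it first uses Lemma~\ref{lem:characterize-Rep-K} together with Lemma~\ref{lem:elementary} to recognize $\Rep(K_i)$ intrinsically inside $\cC_f(K_i<G_i)$ (as the union of all finite full tensor subcategories), then uses Lemma~\ref{lem:play-with-dim} to match $N_1 = N_2$, then cites Klingenberg's theorem that $\PGL(3,\cO)$ has no nontrivial abelian normal subgroups, Neshveyev--Tuset's result that monoidally equivalent $\Rep(K_i)$ with this property forces $K_1 \cong K_2$, and finally Pink's theorem on compact subgroups of linear algebraic groups to conclude $\KK_1 \cong \KK_2$. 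None of this machinery appears in your proposal, and the Hecke-algebra route you sketch does not obviously replace it.
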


Note that (ii) only provides a partial result. As we explain in Remark \ref{rem:indication-mon-equiv}, we have convincing evidence, but no proof, that $\GG_1$ and $\GG_2$ are monoidally equivalent if and only if $q_1 = q_2$.

\begin{proof}
Denote by $\cO_i \subset \KK_i$ the ring of integers. Combining \cite[Theorem 1.26]{descent-in-buildings} with the fact that the spherical building at infinity for $\Delta_{T_i}$ is the projective plane of $\KK_i$, it follows that $G_i = \PGL(3,\KK_i) \rtimes \Aut(\KK_i)$ and $K_i = \PGL(3,\cO_i) \rtimes \Aut(\KK_i)$. Note here that the field automorphisms $\sigma \in \Aut(\KK_i)$ automatically preserve $\cO_i$ globally and that they form a compact group.

Denote by $u_i$ the fundamental representation of the compact quantum group $\GG_i$. Denote by $\bF_i : \Rep(\GG_i) \recht \cC_f(K_i < G_i)$ the monoidal functor given by Proposition \ref{thm:repgt-functor}. Write $N_i = 1+q_i+q_i^2$.

(i) When the local fields $\KK_i$ are isomorphic, the buildings $\Delta_{T_i}$ are isomorphic and therefore, the inclusions $K_i < G_i$ are isomorphic. It follows that the rigid $C^*$-tensor categories $\cC_f(K_i < G_i)$ are monoidally equivalent, so that also the compact quantum groups $\HH_1$ and $\HH_2$ are monoidally equivalent.

Conversely, assume that the tensor categories $\cC_f(K_i < G_i)$ are monoidally equivalent. By Lemmas \ref{lem:characterize-Rep-K} and \ref{lem:elementary} below, this monoidal equivalence restricts to a monoidal equivalence of $\Rep(K_1)$ and $\Rep(K_2)$. By Lemma \ref{lem:play-with-dim}, the smallest possible dimension of an object in $\cC_f(K_i < G_i) \setminus \Rep(K_i)$ is $N_i$. So, $N_1 = N_2$ and thus, $q_1 = q_2$.
By \cite[Theorem 3]{klingenberg}, the groups $\PGL(3,\cO_i)$ have no nontrivial abelian normal subgroups. Then also the groups $K_i$ have no nontrivial abelian normal subgroups. Since $\Rep(K_1)$ and $\Rep(K_2)$ are monoidally equivalent, it follows from \cite[Theorem 3.1]{nesh-tus-cohom} that $K_1 \cong K_2$.

When $\KK_i$ has characteristic zero, it is isomorphic with a finite extension of $\QQ_p$ for some prime $p$. When $\KK_i$ has positive characteristic, it is isomorphic with $\FF_{q_i}((X))$. In the first case, $\Aut(\KK_i)$ is a finite group and the image of $\SL(3,\cO_i)$ in $\PGL(3,\cO_i)$ is an open (and thus, finite index) subgroup, so that $K_i$ and $\SL(3,\cO_i)$ admit isomorphic open subgroups. In particular, every open subgroup of $K_i$ has a finite abelianization. In the second case, $\Aut(\KK_i) \cong \cO_i^\times \rtimes \Aut(\FF_{q_i})$ so that $K_i$ has an open subgroup that admits $\cO_i^\times$ as a quotient. Since $K_1 \cong K_2$, we conclude that either both $\KK_i$ have characteristic zero, or both have positive characteristic. In the second case, because $q_1 = q_2$, it follows that $\KK_1 \cong \KK_2$. In the first case, it follows from the discussion above that $\SL(3,\cO_1)$ and $\SL(3,\cO_2)$ admit isomorphic open subgroups. It then follows from \cite[Corollary 0.3]{pink} that $\KK_1 \cong \KK_2$.

(ii) First assume that $\al : \Rep(\GG_1) \recht \Rep(\GG_2)$ is an equivalence of $C^*$-tensor categories. By symmetry, we may assume that $q_1 \leq q_2$. Then, $\al(u_1)$ is an irreducible representation of $\GG_2$ that generates $\Rep(\GG_2)$. Therefore, the object $\bF_2(\al(u_1))$ generates $\cC_f(K_2 < G_2)$. This implies that $\bF_2(\al(u_1)) \not\in \Rep(K_2)$. By Lemma \ref{lem:play-with-dim} below, $\dim(\bF_2(\al(u_1))) \geq N_2$ so that
$$N_1 = \dim(\al(u_1)) = \dim(\bF_2(\al(u_1))) \geq N_2 \; .$$
So $q_1 \geq q_2$ and we conclude that $q_1 = q_2$.

Secondly, assume that $\KK_1 \cong \KK_2$. In particular, $q_1 = q_2$ and we write $N = N_1 = N_2$. Since $\KK_1 \cong \KK_2$, we find an isomorphism of graphs $\theta : \Delta_{T_1} \recht \Delta_{T_2}$. Since the action $\Gamma_2 \actson \Delta_{T_2}$ is vertex transitive, we may assume that $\theta(e) = e$. Denote by $\tau_i : \Gamma_i \recht \ZZ/3\ZZ$ the unique group homomorphism sending the canonical generators to $1$. Then, $\tau_i$ can be viewed as a type map on $\Delta_{T_i}$, in the sense that for every triangle in $\Delta_{T_i}$, the types of the three vertices of the triangle are $\{0,1,2\}$. Since the type map on an $\tilde{A}_2$-building is uniquely determined by the types of the vertices of a single triangle and since $\theta(e) = e$, we conclude that either $\theta$ is type preserving or $\theta$ preserves type $0$ and interchanges types $1$ and $2$. In the second case, we replace $T_2$ by its opposite, which does not change the compact quantum group $\GG_2$ up to isomorphism. So, we may assume that there is a type preserving isomorphism of graphs $\theta : \Delta_{T_1} \recht \Delta_{T_2}$ satisfying $\theta(e) = e$. Since $\theta$ is type preserving, $\theta$ maps positive edges to positive edges.

Write $F = \{1,\ldots,N\}$. For $i \in \{1,2\}$, denote by $L_{i,n}$ the set of paths in $\Delta_{T_i}$ that start at $e$ and consist of $n$ positive edges. Note that
$$F^n \recht L_{i,n} : (a_{i_1},\ldots,a_{i_n}) \mapsto (e,a_{i_1}, a_{i_1} a_{i_2}, \ldots, a_{i_1} \cdots a_{i_n})$$
is a bijection and that $\theta$ induces a bijection $L_{1,n} \recht L_{2,n}$. Altogether, this induces a permutation of $F^n$ and thus a family of unitary operators $\cU_n : (\CC^N)^{\ot n} \recht (\CC^N)^{\ot n}$. Defining the vectors $E_i$ by \eqref{eqn:triangle-invariant-vector} and reasoning as in the proof of Proposition \ref{thm:repgt-functor}, we find that
$$\cU_{a+b+3} \; (1^{\ot a} \ot E_1 \ot 1^{\ot b}) \; \cU_{a+b}^* = 1^{\ot a} \ot E_2 \ot 1^{\ot b}$$
and we conclude that $\Rep(\GG_1)$ is monoidally equivalent with $\Rep(\GG_2)$.

(iii) We first prove the easy implications c) $\Rightarrow$ a) and c) $\Rightarrow$ b). When two triangle presentations are isomorphic, all data are isomorphic and we find isomorphisms $\GG_1 \cong \GG_2$ and $\HH_1 \cong \HH_2$. When $T_2$ equals the opposite of $T_1$, there is a natural isomorphism $\GG_1 \recht \GG_2$ given by sending the fundamental representation of $\GG_1$ to the contragredient of the fundamental representation of $\GG_2$. There then also is a natural isomorphism $\Gamma_1 \recht \Gamma_2$ given by sending the generators of $\Gamma_1$ to the inverses of the generators of $\Gamma_2$. This isomorphism induces an isomorphism between the Cayley graphs $\Delta_{T_1} \recht \Delta_{T_2}$, so that we find a continuous group isomorphism $\theta : G_1 \recht G_2$ satisfying $\theta(\Gamma_1) = \Gamma_2$ and $\theta(K_1) = K_2$. This isomorphism $\theta$ induces an isomorphism $\HH_1 \cong \HH_2$.

We next prove that a) $\Rightarrow$ c) and b) $\Rightarrow$ c). Assume that a) or b) holds. From (i) and (ii), we know that $q_1 = q_2$ and we write $N = N_1 = N_2$. Define as follows the unitary representation $u \in M_N(\CC) \ot C(\HH_2)$. When b) holds, we define $u$ as the image of the fundamental representation of $\HH_1$ under the isomorphism $\HH_1 \cong \HH_2$. When a) holds, we define $u$ by first applying the isomorphism $\GG_1 \cong \GG_2$ to the fundamental representation of $\GG_1$ and then applying the functor $\bF_2 : \Rep(\GG_2) \recht \cC_f(K_2 < G_2)$ and identifying $\cC_f(K_2 < G_2)$ with $\Rep(\HH_2)$ through Proposition~\ref{thm:kac-type-ff}.

In both cases, $u$ is an $N$-dimensional representation of $\HH_2$ that generates $\Rep(\HH_2)$ and the vector $E_1 = \sum_{(i,j,k) \in T_1} e_i \ot e_j \ot e_k$ is an invariant vector for the $3$-fold tensor power $u^{\ot 3}$. In particular, $u \not\in \Rep(K_2)$. Combining Lemma \ref{lem:play-with-dim} and Proposition \ref{thm:kac-type-ff}, we find an orthonormal basis $(\eta_i)_{i=1,\ldots,N}$ of $\CC^N$ and a map $\om : T_2 \recht S^1$ such that
\begin{equation}\label{eq:dichotomy-E1}
E_1 = \sum_{(i,j,k) \in T_2} \om(i,j,k) \, \eta_i \ot \eta_j \ot \eta_k \qquad\text{or}\qquad E_1 = \sum_{(i,j,k) \in T_2} \om(i,j,k) \, \eta_k \ot \eta_j \ot \eta_i \; .
\end{equation}
Assume that the first equality holds. Define a unitary $w \in \cU(\CC^N)$ such that $w(\eta_i) = e_i$ for all $i$. Using the notation in \eqref{eq:vector-E-om}, we find that $(w \ot w \ot w)E_1 = E_{2,\om}$. By Lemma \ref{lem:rigid-intertwiners}, it follows that $T_1$ and $T_2$ are isomorphic. If the second equality in \eqref{eq:dichotomy-E1} holds, we similarly find that $T_1$ is isomorphic with the opposite of $T_2$.
\end{proof}

In the proof of Theorem \ref{thm:classif-compact}, we made use of a few lemmas that we prove now. We first introduce the following definition.

\begin{definition}\label{def:support-module}
Let $K$ be a compact open subgroup of a locally compact group $G$. For every $\cH \in \cC_f(K < G)$ we define $\supp(\cH) \subset G$ as the set of all $g \in G$ such that $\chi_K \cdot \cH \cdot \chi_{gK} \neq \{0\}$.
\end{definition}

Note that $\supp(\cH)$ is both left $K$-invariant and right $K$-invariant. Also, $\supp(\cH)$ consists of finitely many right (or left) $K$-cosets. It is easy to check that $\supp(\cH \ot_{L^\infty(G/K)} \cK) = \supp(\cH) \, \supp(\cK)$ and that $\supp(\cH \oplus \cK) = \supp(\cH) \cup \supp(\cK)$.

\begin{lemma}\label{lem:characterize-Rep-K}
Let $G$ be a totally disconnected locally compact group with compact open subgroup $K < G$. Assume that $\bigcup_{n=1}^\infty (KgK)^n$ is noncompact for every $g \in G \setminus K$. Then $\Rep(K) \subset \cC_f(K < G)$ has the following intrinsic characterization: every full $C^*$-tensor subcategory of $\cC_f(K < G)$ with finitely many irreducible objects is contained in $\Rep(K)$ and $\Rep(K)$ is the union of an increasing family of such subcategories.
\end{lemma}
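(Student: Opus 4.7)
The plan is to translate everything into the language of the support set introduced in Definition~\ref{def:support-module}. Observe first that $\Rep(K)$ is precisely the subcategory of $\cH \in \cC_f(K<G)$ with $\supp(\cH)\subset K$: indeed, $\chi_{gK}\cdot\xi=\xi\cdot\chi_{gK}$ for all $g$ if and only if $\chi_K\cdot\cH\cdot\chi_{gK}=\{0\}$ whenever $gK\neq K$.

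For any full $C^*$-tensor subcategory $\cD\subset\cC_f(K<G)$, set
\[
S(\cD)=\bigcup_{\cH\in\cD}\supp(\cH)\subset G.
\]
Using the properties of $\supp$ recorded after Definition~\ref{def:support-module}, together with the facts that $\cD$ contains the unit object (whose support is $K$), is closed under tensor products, and is closed under conjugation (so that $\supp(\bar\cH)=\supp(\cH)^{-1}$), one checks that $S(\cD)$ is $K$-biinvariant, contains $K$, and is closed under multiplication and inversion. Thus $S(\cD)$ is an open subgroup of $G$. Conversely, since every object is a finite direct sum of irreducibles and every irreducible has support contained in finitely many $K$-double cosets, $S(\cD)$ is a union of $K$-double cosets indexed by the irreducibles of $\cD$.

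Now assume $\cD$ has only finitely many isomorphism classes of irreducible objects. Then $S(\cD)$ is a finite union of $K$-double cosets, hence compact. For every $g\in S(\cD)$ we have $\bigcup_{n\geq 1}(KgK)^n\subset S(\cD)$, so $\bigcup_{n\geq 1}(KgK)^n$ is relatively compact. By the hypothesis of the lemma, this forces $g\in K$. Therefore $S(\cD)=K$ and $\cD\subset\Rep(K)$.

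For the second statement, $K$ is a compact open subgroup of a totally disconnected locally compact group, hence profinite: $K=\varprojlim K/L$, where $L$ ranges over the filtered family of open normal subgroups of $K$ of finite index. For each such $L$ the category $\Rep(K/L)\subset\Rep(K)$ is a full $C^*$-tensor subcategory with finitely many irreducibles, and every finite-dimensional unitary representation of $K$ factors through some $K/L$. Hence $\Rep(K)=\bigcup_L\Rep(K/L)$, giving the desired increasing exhaustion. The only mildly delicate point in the argument is the verification that $S(\cD)$ is stable under inversion, which requires invoking rigidity/closedness under conjugates of a $C^*$-tensor subcategory; once that is in hand the rest is immediate from the noncompactness hypothesis.
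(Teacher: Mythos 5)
Your argument is correct and is, at its core, the same as the paper's: supports are $K$-bi-invariant and multiplicative under tensor product, and a $C^*$-tensor subcategory with finitely many irreducibles therefore has all its objects supported in a fixed compact set, which the noncompactness hypothesis forces to equal $K$. The point you flag as delicate --- closure of $S(\cD)$ under inversion, hence closure of $\cD$ under conjugates --- is actually not needed. Since $S(\cD)$ contains $K$, is $K$-bi-invariant, and is closed under multiplication (which only uses that $\cD$ is a tensor subcategory), one already has $(KgK)^n \subset S(\cD)$ for all $n$ whenever $g \in S(\cD)$, so $\bigcup_n (KgK)^n$ lies in the compact set $S(\cD)$ without any mention of conjugates. (The statement that a full $C^*$-tensor subcategory of a rigid one with finitely many irreducibles is automatically closed under conjugates is true, but you can simply drop it.) The paper's own proof is this same argument specialized to a single object: for $\cH \in \cD$, the supports $\supp(\cH^{\ot n}) = \supp(\cH)^n$ all lie in the compact union of the supports of the finitely many irreducibles of $\cD$, forcing $\supp(\cH) = K$. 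Your profiniteness argument for the exhaustion of $\Rep(K)$ matches the paper's.
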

\begin{proof}
Since $K$ is totally disconnected, we can write $\Rep(K)$ as an increasing union of full $C^*$-tensor subcategories of the form $\Rep(K_i)$ where the $K_i$ are finite quotients of $K$. The hypothesis of the lemma implies that every module $\cH$ that is contained in a full $C^*$-tensor subcategory with finitely many irreducibles must satisfy $\supp(\cH) = K$. This precisely means that $\cH \in \Rep(K)$.
\end{proof}

\begin{lemma}\label{lem:elementary}
Let $\KK$ be a commutative local field with ring of integers $\cO$. Denote by $\Delta$ the associated Bruhat--Tits $\tilde{A}_2$-building. Let $G < \AutTr(\Delta)$ be a closed subgroup containing $\PGL(3,\KK)$. Denote by $K = \Stab_G(e)$ the stabilizer of the origin. For every $g \in G \setminus K$, we have that $\bigcup_{n = 1}^\infty (KgK)^n$ is noncompact.
\end{lemma}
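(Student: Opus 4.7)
The plan is to show that $\bigcup_{n \geq 1}(KgK)^n$ meets infinitely many $(K,K)$-double cosets of $G$. Since $K$ is compact and open, every compact subset of $G$ is covered by finitely many such double cosets, so this will imply noncompactness. Throughout, let $K_0 := \PGL(3,\cO) = \Stab_{\PGL(3,\KK)}(e)$, which sits inside $K$ because $\PGL(3,\cO)$ stabilises the class $e = [\cO^3]$.

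For each $(n,m) \in \ZZ_{\geq 0}^2$ I will work with the explicit translation $\al_{n,m} := \operatorname{diag}(\varpi^{n+m}, \varpi^m, 1) \in \PGL(3,\KK) \subseteq G$. The classical Cartan decomposition for $\PGL(3,\KK)$ identifies $K_0 \backslash \PGL(3,\KK) / K_0$ with the $K_0$-orbits on vertices of $\Delta$, and $\al_{n,m} \cdot e$ represents the orbit at bi-distance $(n,m)$ from $e$. Fixing any triangle presentation $T$ with $\Delta_T \cong \Delta$ (one exists by \cite[\S~4]{cmsz1}), that orbit is precisely the set $S_{n,m}$ from Section \ref{sec:triag-pres-building}, so $\al_{n,m} \cdot e \in S_{n,m}$. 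Now for $g \in G \setminus K$, the vertex $v := g \cdot e$ is distinct from $e$ and hence lies in some $S_{n,m}$ with $(n,m) \neq (0,0)$. By Proposition \ref{thm:generic-transitivity}(iii), $K_0$ (and therefore $K$) acts transitively on $S_{n,m}$, so some $k \in K$ satisfies $kv = \al_{n,m} \cdot e$, which forces $\al_{n,m}^{-1} k g \in \Stab_G(e) = K$. Hence $g \in K \al_{n,m} K$ and $KgK = K \al_{n,m} K$.

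A direct diagonal-matrix computation gives $\al_{n,m}^j = \al_{jn, jm}$, and by the previous paragraph $\al_{n,m}^j \cdot e \in S_{jn, jm}$. Since $(n,m) \neq (0,0)$, the pairs $(jn, jm)$ for $j \geq 1$ are pairwise distinct, the sets $S_{jn, jm}$ are pairwise disjoint, so the double cosets $K \al_{n,m}^j K$ are pairwise distinct. As $\al_{n,m}^j \in (K \al_{n,m} K)^j = (KgK)^j$, the union $\bigcup_{j \geq 1}(KgK)^j$ meets infinitely many distinct double cosets and is therefore noncompact. The only nontrivial input is the identification $\al_{n,m} \cdot e \in S_{n,m}$, which matches the lattice-theoretic description of $\Delta_\KK$ in Section \ref{sec:bruhat-tits} with the generator-based description of $S_{n,m}$; this is the geometric content of the Cartan decomposition for $\PGL(3,\KK)$ and presents no real obstacle.
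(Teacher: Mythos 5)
Your proof is correct, and it reaches the same conclusion as the paper's argument by a slightly different packaging of the same underlying input. The paper uses the Cartan decomposition of $\PGL(3,\KK)$ directly: writing $g = k_0 D(a,b) k_1$ with $D(a,b) = \operatorname{diag}(\varpi^{-a},\varpi^{-b},1)$, $k_0,k_1\in K$ and $a,b\geq 0$ not both zero, it then observes that $D(a,b)^n = D(na,nb)$ escapes every compact set. You instead route through the building geometry: you identify $K$-double cosets with the sets $S_{n,m}$, place $g\cdot e$ in some cell $S_{n,m}$ with $(n,m)\neq(0,0)$, choose a diagonal representative $\al_{n,m}$, and note that the powers $\al_{n,m}^j=\al_{jn,jm}$ lie in pairwise distinct open double cosets. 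The engine is identical --- the Cartan decomposition relative to $K_0=\PGL(3,\cO)$ is precisely the transitivity of $K_0$ on each $S_{n,m}$ --- and counting double cosets is an explicit way of saying $\{D(na,nb)\}_{n\geq 1}$ is not precompact. Two small points of hygiene: Proposition~\ref{thm:generic-transitivity}(iii) as stated concerns the stabilizer inside the full $\AutTr(\Delta_T)$, while the lemma allows a smaller $G$, so the fact you actually invoke is the $K_0$-transitivity coming from Cartan (or the cited [CMS] remark) rather than the proposition as worded; your parenthetical ``$K_0$ (and therefore $K$)'' gets the logic right but the attribution slightly off. And the identification $\al_{n,m}\cdot e\in S_{n,m}$ depends on a sign convention for positive edges, but since your argument is invariant under swapping $S_{n,m}\leftrightarrow S_{m,n}$, this is harmless.
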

\begin{proof}
Fix a generator $\varpi$ of the maximal ideal of $\cO$. Writing $K_0 = \PGL(3,\cO)$, every element $g \in \PGL(3,\KK)$ is of the form
\begin{equation}\label{eqn:writing-g}
g = k_0 \, D(a,b) \; k_1 \quad\text{with}\quad D(a,b) = \begin{pmatrix} \varpi^{-a} & 0 & 0 \\ 0 & \varpi^{-b} & 0 \\ 0 & 0 & 1 \end{pmatrix} \quad\text{and $k_0,k_1 \in K_0$, $a,b \geq 0$.}
\end{equation}
Since the action of $\PGL(3,\KK)$ on $\Delta$ is transitive, every $g \in G$ can be written as in \eqref{eqn:writing-g} with $k_0,k_1 \in K$ and $a,b \geq 0$. So when $g \in G \setminus K$, the double coset $K g K$ contains an element of the form $D(a,b)$ with $a,b \geq 0$ and with at least one of the $a,b$ being nonzero. Since the set $\{D(na,nb)\mid n \geq 1\}$ is not precompact inside $G$, the lemma is proved.
\end{proof}


\begin{lemma}\label{lem:play-with-dim}
Let $q \geq 2$ and write $N = 1+q+q^2$. Let $T$ be a triangle presentation and assume that $\Delta_T$ is the Bruhat--Tits $\tilde{A}_2$-building of a commutative local field $\KK$. Let $G = \AutTr(\Delta_T)$ and $K = \Stab_G(e)$. Denote by $\Gamma_T$ the countable group defined by \eqref{eqn:triangle-group}, generated by $S_+ = \{ a_i \mid i=1,\ldots,N\}$ and $S_- = S_+^{-1}$.
\begin{enumerate}[(i)]
\item If $\cH \in \cC_f(K < G)$ and $\dim(\cH) < N$, then $\cH \in \Rep(K)$.
\item Let $\cH \in \cC_f(K < G)$ with $\dim(\cH) = N$. Assume that $\cH \not\in \Rep(K)$ and that
$$E \in \chi_K \cdot\cH^{\ot 3} = \chi_K \cdot \cH \ot_{L^\infty(G/K)} \cH \ot_{L^\infty(G/K)} \ot \cH$$
is an invariant vector with $\|E\|^2 = |T|$. Then $\supp(\cH)$ equals either $S_+ K$ or $S_- K$. When $\supp(\cH) = S_+ K$, there exist unit vectors $\eta_i \in \chi_K \cdot \cH \cdot \chi_{a_i K}$ and a map $\om : T \recht S^1$ such that
    $$E = \sum_{(i,j,k) \in T} \om(i,j,k) \, (\eta_i \ot \pi_\cH(a_i) \eta_j \ot \pi_\cH(a_i a_j) \eta_k) \; .$$
    When $\supp(\cH) = S_- K$, there exist unit vectors $\eta_i \in \chi_K \cdot \cH \cdot \chi_{a_i^{-1} K}$ and a map $\om : T \recht S^1$ such that
    $$\sum_{(i,j,k) \in T} \om(i,j,k) \, (\eta_k \ot \pi_\cH(a_k^{-1}) \eta_j \ot \pi_\cH(a_k^{-1} a_j^{-1}) \eta_i) \; .$$
\end{enumerate}
\end{lemma}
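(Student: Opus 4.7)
The plan is to combine a $K$-orbit dimension count on $G/K$ with the orbit structure supplied by Proposition~\ref{thm:generic-transitivity}.

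For (i), I would use that $\chi_K \cdot \cH$ decomposes as $\bigoplus_{gK \in G/K} \chi_K \cdot \cH \cdot \chi_{gK}$ and that $\pi(k)$ carries $\chi_K \cdot \cH \cdot \chi_{gK}$ onto $\chi_K \cdot \cH \cdot \chi_{kgK}$, so that $\dim(\chi_K \cdot \cH \cdot \chi_{gK})$ is constant on $K$-orbits in $G/K$. Under the identification $G/K = \Delta_T$, Proposition~\ref{thm:generic-transitivity}(ii)--(iii) states that the nontrivial $K$-orbits are precisely the $S_{n,m}$ with $(n,m) \neq (0,0)$, with $|S_{1,0}| = |S_{0,1}| = N$ and every other nontrivial orbit of size at least $Nq^2 > N$. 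Hence $\dim\cH < N$ forces $\supp(\cH)/K = \{K\}$, i.e.\ $\cH \in \Rep(K)$.

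For (ii), the same inequality with $\dim\cH = N$ and $\cH \notin \Rep(K)$ forces $\supp(\cH)/K$ to consist of a single nontrivial $K$-orbit of size exactly $N$, on which each summand is one-dimensional, and to exclude the trivial orbit (else $\dim\cH \geq N+1$). The only nontrivial orbits of size $N$ being $S_{1,0}$ and $S_{0,1}$, we get $\supp(\cH) \in \{S_+K, S_-K\}$. I treat only the $S_+K$ case; the $S_-K$ case is identical up to replacing each $a_i$ with $a_i^{-1}$, which trades closed positive triangles at $e$ for closed negative ones and thus labels the basis by $(k,j,i)$ for $(i,j,k) \in T$. Pick unit vectors $\eta_i \in \chi_K \cdot \cH \cdot \chi_{a_i K}$ and decompose the triple fibre product: $\chi_K \cdot \cH^{\ot 3} \cdot \chi_K$ splits as a direct sum over chains $(K, g_1 K, g_2 K, K)$ whose consecutive steps lie in $S_+ K$. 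Because $\Gamma_T \cap K = \{e\}$, such closed positive chains of length $3$ are parameterized exactly by $(i,j,k) \in T$ with $g_1 = a_i$ and $g_2 = a_i a_j = a_k^{-1}$. Each of the three factors being one-dimensional, one obtains an orthonormal basis $\{e_{i,j,k} := \eta_i \ot \pi(a_i)\eta_j \ot \pi(a_i a_j)\eta_k : (i,j,k) \in T\}$ of $\chi_K \cdot \cH^{\ot 3} \cdot \chi_K$, so $E = \sum_{(i,j,k) \in T} \om(i,j,k)\, e_{i,j,k}$ with $\sum_T |\om|^2 = |T|$.

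It remains to show $|\om| \equiv 1$, which I would obtain from $K$-invariance of $E$ and transitivity of $K \actson T$. Each $\pi(k)$ permutes the lines $\CC\, e_{i,j,k}$ with unimodular scalar factors, by one-dimensionality of each $\chi_K \cdot \cH \cdot \chi_{a_\ell K}$, so $K$-invariance makes $|\om|$ constant on $K$-orbits in $T$. For the transitivity, identify $T$ with the set of flags at $e$ via $(i,j,k) \mapsto (a_i, a_k^{-1})$: the link of $e$ in $\Delta_T$ is the incidence graph of $\PG(2,\FF_q)$ where $\FF_q$ is the residue field of $\KK$, and $K \supset \PGL(3,\cO)$ acts on this link through the residue quotient $\PGL(3,\FF_q)$, which is classically flag-transitive on $\PG(2,\FF_q)$. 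Hence $|\om|$ is constant on $T$, and the norm identity forces $|\om| \equiv 1$. The sole obstacle is careful bookkeeping; one interpretational point worth flagging is that ``invariant vector $E \in \chi_K \cdot \cH^{\ot 3}$'' should be read as a $K$-invariant element of $\chi_K \cdot \cH^{\ot 3} \cdot \chi_K$, corresponding to an intertwiner from the unit object $L^2(G/K)$ into $\cH^{\ot 3}$ in $\cC_f(K<G)$.
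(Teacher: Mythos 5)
Your proof is correct and follows the same route as the paper's: the dimension count on $K$-orbits in $G/K$ (with $|S_{n,m}|>N$ for $(n,m)\notin\{(0,0),(1,0),(0,1)\}$) for (i) and the support determination in (ii); the one-dimensionality of each fibre $\chi_K\cdot\cH\cdot\chi_{a_iK}$ giving the parametrization of $\chi_K\cdot\cH^{\ot 3}\cdot\chi_K$ by closed positive length-$3$ loops at $e$, i.e.\ by $T$; and then $K$-invariance together with transitivity of $K$ on these loops (the paper deduces it from $2$-transitivity on $S_{1,0}$, you go via flag-transitivity of $\PGL(3,\FF_q)$ -- the same fact) to make $|\om|$ constant, and the norm condition to pin $|\om|\equiv 1$. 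The only cosmetic differences are that you spell out explicitly that $\supp(\cH)$ cannot also contain the trivial coset, and you flag that the $K$-invariant vector $E$ lives in $\chi_K\cdot\cH^{\ot3}\cdot\chi_K$; both points are implicit in the paper.
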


\begin{proof}
Define the subsets $S_{n,m} \subset \Gamma_T$ as in \eqref{eqn:Snm}. As explained in the proof of part (iii) of Proposition \ref{thm:generic-transitivity}, the double cosets in $K g K$ in $G$ are precisely given as the subsets $S_{n,m} K$. So, the cardinalities of the subsets $K g K/K$ of $G/K$ are $|S_{n,m}|$. We have $S_{0,0} = \{e\}$, $S_{1,0} = S_+$ and $S_{0,1} = S_-$. Note that $|S_{1,0}| = |S_{0,1}| = N$, while for $n,m \geq 1$, we have that $|S_{n,m}| > N$.

(i) Assume that $\cH \in \cC_f(K < G)$ and $\dim(\cH) < N$. Since the categorical dimension of $\cH$ is at least the cardinality of $\supp(\cH)/K$ inside $G/K$, the discussion above forces $\supp(\cH) = K$. This means that $\cH \in \Rep(K)$.

(ii) Take $\cH$ and $E \in \chi_K \cdot \cH^{\ot 3}$ as in the formulation of the lemma. Since $\cH \not\in \Rep(K)$, we have $\supp(\cH) \neq K$. The same dimension argument as in the proof of (i) forces $\supp(\cH)$ to be either $S_+ K$ or $S_- K$. We assume that $\supp(\cH) = S_+ K$ and the proof of the other case is analogous.

View $\chi_K \cdot \cH$ as an irreducible representation of $L^\infty(G/K) \rtimes K$ with support projection $\chi_{S_+ K}$. Since $\chi_K \cdot \cH$ has dimension $N = |S_+|$, the spaces $\cH_1 = \chi_K \cdot \cH \cdot \chi_{a_i K}$ are one-dimensional. Denoting by $P_{1,0}$ the set of positive edges in $\Delta_T$, it follows that $\cH$ is isomorphic with a twist of the $L^\infty(G/K)$-$G$-$L^\infty(G/K)$-module $\ell^2(P_{1,0})$. More precisely, as an $L^\infty(G/K)$-bimodule, we have $\cH \cong \ell^2(P_{1,0})$ and under this isomorphism of bimodules, the unitary representation of $G$ on $\ell^2(P_{1,0})$ is of the form $g \cdot e_p \in S^1 e_{g \cdot p}$ for every $g \in G$ and $p \in P_{1,0}$. We then identify $\cH^{\ot 3}$ with a twist of $\ell^2(P_{3,0})$, where $P_{n,0}$ denotes the set of paths in $\Delta_T$ consisting of $n$ positive edges. By part (iii) of Proposition \ref{thm:generic-transitivity}, the action of $K$ on the positive edges starting at $e$ is $2$-transitive. It follows that the action of $K$ on the set of positive loops of length $3$ starting at $e$ is transitive. Then, the lemma follows.
\end{proof}

Given a triangle presentation $T$ of order $q \geq 2$ on the set $\{1,\ldots,N\}$, where $N = 1 + q + q^2$, we define for every map $\om : T \recht S^1$ the vector $E_{T,\om} \in \CC^N \ot \CC^N \ot \CC^N$ given by
\begin{equation}\label{eq:vector-E-om}
E_{T,\om} = \sum_{(i,j,k) \in T} \om(i,j,k) \, (e_i \ot e_j \ot e_k) \; .
\end{equation}

\begin{lemma}\label{lem:rigid-intertwiners}
Let $q \geq 2$ be an integer and $N = 1 + q + q^2$. Let $T_1$ and $T_2$ be triangle presentations on the set $\{1,\ldots,N\}$ and let $\om : T_2 \recht S^1$ be a function. If $w \in \cU(\CC^N)$ and $(w \ot w \ot w) E_{T_1} = E_{T_2,\om}$, then $w$ is a monomial matrix (i.e.\ in every row and every column of $w$, there is exactly one nonzero element). In particular, the triangle presentations $T_1$ and $T_2$ are isomorphic.
\end{lemma}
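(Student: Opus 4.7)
The plan is to translate the operator equation $(w\otimes w\otimes w)E_{T_1}=E_{T_2,\omega}$ into a rank condition on a family of matrices that depend linearly on the rows of $w$. Denote by $r_a\in\CC^N$ the $a$-th row of $w$, viewed as a column vector, and define the bilinear map $B:\CC^N\otimes\CC^N\to\CC^N$ by $B(x\otimes y)=\sum_{(i,j,k)\in T_1}x_iy_je_k$. For every $x\in\CC^N$, let $M_x\in M_N(\CC)$ be the matrix of the linear map $y\mapsto B(x\otimes y)$; its sparsity pattern is fixed by $T_1$, with $(M_x)_{k,j}=x_{L_1(j,k)}$ whenever $j\to k$ in $T_1$, where $L_1(j,k)$ denotes the label of that edge, and $0$ otherwise. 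Rewriting the given equation in coordinates, it is equivalent to
\[
M_{r_a}r_b=B(r_a\otimes r_b)=\begin{cases}\omega(a,b,c)\,\overline{r_c}&\text{if $(a,b,c)\in T_2$,}\\0&\text{otherwise,}\end{cases}
\]
for every $a,b\in F$, where $\overline{r_c}=w^*e_c$ is the $c$-th row of $\overline{w}$ viewed as a column vector.

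Since $\{r_b\}_{b\in F}$ is an orthonormal basis of $\CC^N$, the image of $M_{r_a}$ is the linear span of the $q+1$ vectors $\overline{r_{c(a,b)}}$ as $b$ runs over the successors of $a$ in $T_2$. Cyclic invariance combined with axiom~(ii) of $T_2$ makes $b\mapsto c(a,b)$ injective, so these vectors are linearly independent; hence $\mathrm{rank}(M_{r_a})=q+1$ for every $a\in F$.

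The crux is then the structural rigidity statement that $\mathrm{rank}(M_x)\leq q+1$ if and only if $x$ is a scalar multiple of some $e_l$. The ``if'' direction follows because $M_{e_l}$ has exactly $q+1$ nonzero entries forming a bijection between the successors and the predecessors of $l$ (by axioms~(ii) and~(iv)), hence rank $q+1$. For the converse, suppose $\|x\|_0\geq 2$ and pick $l_1,l_2\in\mathrm{supp}(x)$. The $j$-th column of $M_x$ equals $\sum_{k\in\mathrm{succs}(j)}x_{L_1(j,k)}e_k$, so it is nonzero exactly when $\mathrm{supp}(x)\cap\mathrm{preds}(j)\neq\emptyset$, and uses only standard basis vectors indexed by successors of $j$. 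Using axiom~(iii), that $l_1,l_2$ admit a unique common successor $c^*$ and a unique common predecessor, one performs a case analysis depending on whether the edge $(L_1(l_1,c^*),c^*)$ lies in $\mathrm{supp}(M_{e_{l_1}})$; in either case the column span of $M_x$ already has dimension $2q$ or $2q+1$. Since $2q>q+1$ for $q\geq 2$, this rules out $\mathrm{rank}(M_x)=q+1$, and $\|x\|_0\geq 3$ is reduced to this via a similar (more involved) analysis on an appropriately chosen pair in the support.

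Combining the two parts, each $r_a$ satisfies $\|r_a\|_0\leq 1$, so $r_a=\lambda_a e_{\pi(a)}$ with $|\lambda_a|=1$. Orthonormality of the rows forces $\pi:F\to F$ to be a bijection, so $w=D\cdot P_\pi$ with $D=\mathrm{diag}(\lambda_a)$ and $P_\pi$ the permutation matrix of $\pi$ is monomial. Substituting this form into $(w\otimes w\otimes w)E_{T_1}=E_{T_2,\omega}$ and matching coefficients yields $\pi(T_1)=T_2$ together with the coboundary relation $\omega(a,b,c)=\lambda_a\lambda_b\lambda_c$ on $T_2$, so $T_1$ and $T_2$ are isomorphic as triangle presentations. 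The principal technical hurdle is the combinatorial rigidity for $M_x$ when $\|x\|_0\geq 2$: the projective-plane structure encoded in axiom~(iii) makes the case $\|x\|_0=2$ tractable, but some care is needed to organize the argument uniformly in the size of the support.
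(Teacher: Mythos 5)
Your proposal takes a genuinely different route from the paper. The paper writes $Y_1 = E_{T_1}$-induced isometry, sets $P_i = Y_i Y_i^*$, transfers the diagonal slices $(\id\ot\Om_{e_i,e_i})(P_1)$ through $w$, and then computes $\Tr(L_i L_j)$ directly using the projective-plane counting; the upshot is the identity $1 = 1 + (q-1)\sum_b |w_{bi}|^2 |w_{bj}|^2$ for $i\neq j$, which forces $w$ to be monomial in one stroke. You instead translate the equation into a family of $N\times N$ matrices $M_x$ and reduce the lemma to a rank-rigidity statement: $\mathrm{rank}(M_x)\leq q+1$ forces $\|x\|_0\leq 1$. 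This is a nice reformulation and, if the rigidity were fully established, it would give a clean and elementary proof, and even recover the additional coboundary structure $\om(a,b,c)=\lambda_a\lambda_b\lambda_c$.

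However, there is a real gap in the rigidity claim. Your argument for $\|x\|_0 = 2$ is correct: one checks that the $q$ columns indexed by $\mathrm{succs}(l_1)\setminus\{c^*\}$ each have a \emph{single} nonzero entry (at a distinct predecessor of $l_1$), and likewise for $l_2$, and combining with the column $c^*$ one gets rank $2q$ or $2q+1 > q+1$. But this hinges crucially on those columns having exactly one nonzero entry, which is a special feature of $2$-element support: for $j\in\mathrm{succs}(l_1)\setminus\{c^*\}$ and $l_2\neq l_1$, one has $l_2\not\to j$ because $c^*$ is the unique common successor. Once $|\mathrm{supp}(x)|\geq 3$, the columns acquire additional nonzero entries wherever other elements of the support are predecessors of $j$, and the restriction of the submatrix with rows $\mathrm{preds}(l_1)$ and columns $\mathrm{succs}(l_1)$ is no longer diagonal; cancellation is a priori possible. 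So the assertion that the case $\|x\|_0\geq 3$ ``is reduced to this via a similar (more involved) analysis'' is not a reduction but a different (and genuinely harder) argument, and it is not supplied. You yourself flag this as the principal technical hurdle. As it stands, the proof is incomplete: the key combinatorial rigidity needs to be proved in full generality, or one should sidestep it, as the paper does, by working with the transferred projections and the trace identity $\Tr(L_i L_j)=1$, which avoids any rank estimates.
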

\begin{proof}
Write $F = \{1,\ldots,N\}$. As before, we write $i \to_1 j$ if there exists a $k \in F$ with $(i,j,k) \in T_1$. We similarly define the notation $i \to_2 j$. Define the subsets $R_i \subset F \times F$ consisting of all $(a,b)$ with $a \to_i b$. Define $\om : R_2 \recht S^1$ so that $\om(i,j) = \om(i,j,k)$ whenever $(i,j,k) \in T_2$, which is unambiguous since for every $(i,j) \in R_2$, there is a unique $k \in F$ with $(i,j,k) \in T_2$.

Define the operators $Y_i : \CC^N \recht \CC^N \ot \CC^N$ given by
$$Y_1(e_k) = \sum_{i,j \in F , (i,j,k) \in T_1} e_i \ot e_j \qquad\text{and}\qquad Y_2(e_k) = \sum_{i,j \in F , (i,j,k) \in T_2} \om(i,j) \, e_i \ot e_j \; .$$
The assumption $(w \ot w \ot w) E_{T_1} = E_{T_2,\om}$ says that $(w \ot w) Y_1 = Y_2 \overline{w}$. Define $P_i = Y_i Y_i^*$, so that $(w \ot w) P_1 (w^* \ot w^*) = P_2$.

Defining the vector functionals $\Om_{\mu,\rho} : B(\CC^N) \recht \CC : \Om_{\mu,\rho}(T) = \langle T \mu, \rho \rangle$, note that
$$(\id \ot \Om_{e_i,e_i})(P_1) = \sum_{k \in F, k \to_1 i} e_{kk} \; .$$
So for $i=1,\ldots,N$, the operators $(\id \ot \Om_{e_i,e_i})(P_1)$ are commuting orthogonal projections of rank $1+q$ with the property that the product of two distinct of these projections has rank $1$. Writing
$$L_i := w \, (\id \ot \Om_{e_i,e_i})(P_1) \, w^* \; ,$$
also $L_1,\ldots,L_N$ are commuting orthogonal projections with the property that $L_i L_j$ is a rank one projection for all $i \neq j$.

Write $\eta_i = w(e_i)$, so that $\eta_{i,j} = w_{ji}$. Note that $L_i = (\id \ot \Om_{\eta_i,\eta_i})(P_2)$. It follows that
$$L_i = \sum_{\substack{a,b,c,d,k \in F \\ (a,b,k) \in T_2 \;\text{and}\;(c,d,k) \in T_2}} \; \om(a,b) \; \overline{\om(c,d)} \; \overline{w_{bi}} \; w_{di} \; \ e_{ac} \; .$$
So, the diagonal elements of $L_i$ are given by
\begin{equation}\label{eq:diagonal-Li}
L_{i,aa} = \sum_{b \in F , a \to_2 b} \; |w_{bi}|^2 \; .
\end{equation}
Define $F_2 \subset F \times F$ consisting of all pairs $(a,b)$ with $a \neq b$. Since $T_2$ is a triangle presentation, for every $(b,d) \in F_2$, there is a unique pair $(\al(b,d),\gamma(b,d)) \in F_2$ such that there exists a (also unique) $k \in F$ satisfying $(\al(b,d),b,k) \in T_2$ and $(\gamma(b,d),d,k) \in T_2$. The off-diagonal elements of $L_i$ are then given by
\begin{equation}\label{eq:off-diagonal-Li}
L_{i,\al(b,d)\gamma(b,d)} = \om(\al(b,d),b) \; \overline{\om(\gamma(b,d),d)} \; \overline{w_{bi}} \; w_{di} \; .
\end{equation}
Fix $i \neq j$. Since $L_i$ and $L_j$ are commuting orthogonal projections and $L_i L_j$ has rank $1$, we get that $1 = \Tr(L_i L_j) = \Tr(L_i L_j^*)$. Since $(b,d) \mapsto (\al(b,d),\gamma(b,d))$ is a permutation of $F_2$, we get that
$$1 = \Tr(L_i L_j^*) = \sum_{a \in F} \; L_{i,aa} \; \overline{L_{j,aa}} \;\; + \;\; \sum_{(b,d) \in F_2} \; L_{i,\al(b,d)\gamma(b,d)} \; \overline{L_{j,\al(b,d)\gamma(b,d)}} \; .$$
Using \eqref{eq:diagonal-Li} and \eqref{eq:off-diagonal-Li}, we conclude that
\begin{equation}\label{eq:sum-sum}
1 = \sum_{\substack{a,b,d \in F \\ a \to_2 b , a \to_2 d}} \; |w_{bi}|^2 \; |w_{dj}|^2  \;\; + \;\; \sum_{(b,d) \in F_2} \; \overline{w_{bi}} \; w_{di} \; w_{bj} \; \overline{w_{dj}} \; .
\end{equation}
When $b = d$, there are precisely $1+q$ elements $a \in F$ with $a \to_2 b$ and $a \to_2 d$. When $b \neq d$, there is precisely one such element $a \in F$. So the first term on the right hand side of \eqref{eq:sum-sum} can be rewritten as
$$(1+q) \, \sum_{b \in F} \; |w_{bi}|^2 \; |w_{bj}|^2 \;\; + \;\; \sum_{(b,d) \in F_2} \; |w_{bi}|^2 \; |w_{dj}|^2 \; .$$
Writing the sum over $(b,d) \in F_2$ as the sum over all $(b,d) \in F \times F$ minus the sum over $(b,b)$, $b \in F$ and using that $w$ is a unitary, the above expression becomes
$$q \, \sum_{b \in F} \; |w_{bi}|^2 \; |w_{bj}|^2 \;\; + \;\; 1 \; .$$
Also in the second term on the right hand side of \eqref{eq:sum-sum}, we rewrite the sum over $(b,d) \in F_2$ as the sum over all $(b,d) \in F \times F$ minus the sum over $(b,b)$, $b \in F$. Using that $w$ is unitary and $i \neq j$, this second term is equal to
$$- \sum_{b \in F} \; |w_{bi}|^2 \; |w_{bj}|^2 \; .$$
Altogether, we have proved that for all $i \neq j$,
$$1 = 1 + (q-1) \, \sum_{b \in F} \; |w_{bi}|^2 \; |w_{bj}|^2 \; .$$
Since $q \geq 2$, it follows that for every $b \in F$, there is at most one $i \in F$ with $w_{bi} \neq 0$. Since $w$ is unitary, it follows that $w$ is a monomial matrix. Defining the permutation $\theta : F \recht F$ so that $w_{\theta(i)i} \neq 0$ for all $i \in F$, the equality $(w \ot w \ot w) E_{T_1} = E_{T_2,\om}$ implies that $(\theta \times \theta \times \theta)(T_1) = T_2$, so that the triangle presentations $T_1$ and $T_2$ are isomorphic.
\end{proof}

\section{\boldmath Diagram calculus for \texorpdfstring{$\Rep(\GG_T)$}{Rep(G\_T)}}\label{sec:diagram-calculus}

In Definition \ref{def:G-T}, we associated to every triangle presentation $T$ the compact quantum group $\GG_T$ with fundamental representation $u$. In this section, we provide a graphical description of the intertwiner spaces $(u^{\ot n},u^{\ot m})$ in order to approach the following open problem.

\begin{question}\label{rem:monoidal-equivalence}
Let $T$ and $T'$ be triangle presentations of the same order $q \geq 2$. Are $\GG_T$ and $\GG_{T'}$ monoidally equivalent?
\end{question}

Below, we provide convincing evidence that at least for low values of $q$ and, in particular, for the two triangle presentations $T_1$ and $T_2$ in Table \ref{tbl:triang-pres-examples}, the previous question has a positive answer; see Remark \ref{rem:indication-mon-equiv}.

\subsection{Intertwiner diagrams}

Analogously to the diagram calculus of \cite{woron-tkdual}, we give a graphical description of a set of intertwiners in $(u^{\ot n},u^{\ot m})$ and we explain how to compute the matrix coefficients of these intertwiners.

\begin{definition}
Fix $n\in\NN$. An $\NC_3^\circ$\textit{-partition of size} $n$ is a partition of a totally ordered set $S$ of $n$ points such that

\begin{enumerate}[(i)]
\item $p$ is noncrossing, i.e.\@ if $i<j<k<l$ are points in $S$ with $i$ and $k$ in the same block, then $j$ and $l$ lie in different blocks;
\item all blocks of $p$ are of size $3$ or singletons;
\item singleton blocks are not nested, i.e.\@ if $i<j<k$ with $i$ and $k$ in the same block, then the singleton $\{j\}$ is not a block of $p$.
\end{enumerate}

Consider a triangle presentation $T$ over $F$. Given an $\NC_3^\circ$-partition $p$ on $S$, a $T$-\textit{labeling} of $p$ is a function $\lambda:S\to F$ such that $(\lambda(i),\lambda(j),\lambda(k))\in T$ whenever $i<j<k$ and $\{i,j,k\}$ is a block of size $3$ in $p$.
\end{definition}

We usually represent the points of $S$ by equidistant points on a horizontal line, and draw lines between points to indicate which points belong to the same block (see Figure~\ref{fig:nc3circ-example}).

\begin{figure}[hb]
        \[
            \begin{tikzpicture}[scale=0.6,yscale=0.5, thick, every circle node/.style={draw, fill=black!10, minimum width=2pt, inner sep=2pt}]
                \node[circle] (pt1) at (1,0) [draw] {};
                \node[circle] (pt2) at (2,0) [draw] {};
                \node[circle] (pt3) at (3,0) [draw] {};
                \draw (pt1) -- (1,2) -- (3,2) -- (pt3);
                \draw (pt2) -- (2,2);
            \end{tikzpicture}
            \qquad\qquad\qquad\qquad
            \begin{tikzpicture}[scale=0.6,yscale=0.5, thick, every circle node/.style={draw, fill=black!10, minimum width=2pt, inner sep=2pt}]
                \node[circle] (pt1) at (1,0) [draw] {};
                \node[circle] (pt2) at (2,0) [draw] {};
                \node[circle] (pt3) at (3,0) [draw] {};
                \node[circle] (pt4) at (4,0) [draw] {};
                \node[circle] (pt5) at (5,0) [draw] {};
                \node[circle] (pt6) at (6,0) [draw] {};
                \node[circle] (pt7) at (7,0) [draw] {};

                \draw (pt2) -- (2,3) -- (7,3) -- (pt7);
                \draw (pt6) -- (6,3);
                \draw (pt3) -- (3,2) -- (5,2) -- (pt5);
                \draw (pt4) -- (4,2);
            \end{tikzpicture}
        \]
        \caption{Two examples of $\NC_3^\circ$-partitions}\label{fig:nc3circ-example}
\end{figure}

\begin{definition}\label{def:diagram-calculus}
        Fix $n,m\in\NN$.
        An \textit{intertwiner diagram of type} $(n,m)$ is a tuple $(p_\ell,p_u,S)$, where
        \begin{enumerate}[(i)]
            \item $p_u,p_\ell$ are $\NC_3^\circ$-partitions of the same finite totally ordered set $S$,
            \item $p_u$ has $m$ singleton blocks,
            \item $p_\ell$ has $n$ singleton blocks.
        \end{enumerate}
        We refer to $p_u$ as the \textit{upper} and $p_\ell$ as the \textit{lower} partition (see Figure~\ref{fig:intertwiner-diagram-example}).
        Consider a triangle presentation $T$ with base set $F$.
        Fix an intertwiner diagram $D=(p_\ell,p_u,S)$ of type $(n,m)$, and choose multi-indices $I\in F^n$, $J\in F^m$.
        A $T$-\textit{labeling} of $D$ compatible with $(I,J)$ is a function $\lambda:S\to F$ such that
        \begin{enumerate}[(i)]
            \item $\lambda$ is a $T$-labeling of both $p_\ell$ and $p_u$;
            \item if $i_1,\ldots,i_m$ are the singleton blocks of $p_\ell$ in increasing order, $I=(\lambda(i_1),\ldots,\lambda(i_m))$;
            \item if $j_1,\ldots,j_n$ are the singleton blocks of $p_u$ in increasing order, $J=(\lambda(j_1),\ldots,\lambda(j_n))$.
        \end{enumerate}
        This procedure defines a linear map $[D]_T:(\CC^F)^{\otimes m}\to (\CC^F)^{\otimes n}$ by setting
        \begin{equation}
        \label{eqn:labelling-formula}
            \langle [D]_T e_J, e_I\rangle = \#\{T\text{-labelings of } D \text{ compatible with } (I,J)\}\;.
        \end{equation}
\end{definition}

\begin{figure}[ht]
            \[
                \begin{tikzpicture}[scale=0.8,yscale=0.5,thick, every circle node/.style={draw, fill=black!10, minimum width=2pt, inner sep=2pt}]
                    \node[circle] (out1) at (4,0) [draw] {};
                    \node[circle] (out2) at (8,0) [draw] {};
                    \node[circle] (out3) at (9,0) [draw] {};
                    \node[circle] (in1) at (1,0) [draw] {};
                    \node[circle] (in2) at (2,0) [draw] {};
                    \node[circle] (in3) at (6,0) [draw] {};

                    \draw (in1) -- (1,-2) -- (3,-2) -- (3,2) -- (5,2) -- (5,-2) -- (7,-2) -- (7,2) -- (9,2) -- (out3);
                    \draw (in2) -- (2,-2);
                    \draw (out1) -- (4,2);
                    \draw (in3) -- (6,-2);
                    \draw (out2) -- (8,2);
                    \node[circle] (inn1) at (3,0) [draw] {};
                    \node[circle] (inn2) at (5,0) [draw] {};
                    \node[circle] (inn3) at (7,0) [draw] {};
                \end{tikzpicture}
            \]
        \caption{Example of an intertwiner diagram, representing the intertwiner $(E^*\otimes 1\otimes E^*\otimes 1 \ot 1)(1 \ot 1 \otimes E\otimes 1\otimes E)$. If we label the points from left to right by $\{1,\ldots,9\}$, the upper partition is $\{\{1\},\{2\},\{3,4,5\},\{6\},\{7,8,9\}\}$, and the lower partition is $\{\{1,2,3\},\{4\},\{5,6,7\},\{8,9\}\}$.}
        \label{fig:intertwiner-diagram-example}
\end{figure}

The next lemma says that the diagram calculus provides a universal description of the intertwiner spaces $(u^{\ot n},u^{\ot m})$, independently of the choice of triangle presentation $T$. The precise proof is straightforward but tedious. So we only provide a sketch.

\begin{lemma}\label{thm:diagram-calculus-props}
The diagram calculus of Definition~\ref{def:diagram-calculus} has the following properties:
\begin{enumerate}[(i)]
        \item For every intertwiner diagram $D$, there exists an intertwiner diagram $D'$ such that $[D]_T^*=[D']_T$ for any triangle presentation $T$.
        \item For every two intertwiner diagrams $A$ and $B$ there exists an intertwiner diagram $C$ such that $[A]_T\otimes [B]_T=[C]_T$ for any triangle presentation $T$.
        \item For every intertwiner diagram $A$ of type $(l,m)$ and every intertwiner diagram $B$ of type $(n,l)$, there exists an intertwiner diagram $C$ of type $(n,m)$ such that $[B]_T\circ [A]_T = [C]_T$ for any triangle presentation $T$.
        \item For any triangle presentation $T$, the linear maps induced by intertwiner diagrams of type $(n,m)$ span the intertwiner space $(u^{\otimes n},u^{\otimes m})$.
\end{enumerate}
\end{lemma}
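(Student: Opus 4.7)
The plan is to treat parts (i) and (ii) as direct bookkeeping with labelings, prove (iii) by an explicit geometric construction of the composite diagram, and deduce (iv) from (iii) together with Remark~\ref{rem:Tannaka-point-of-view}.

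For (i), take $D' = (p_u, p_\ell, S)$, of type $(m, n)$; then $\langle [D']_T e_I, e_J \rangle$ counts the same labelings as $\langle [D]_T e_J, e_I \rangle$, and these entries are nonnegative integers, so $[D']_T = [D]_T^*$. For (ii), define $C = (p_\ell^A \sqcup p_\ell^B, p_u^A \sqcup p_u^B, S^A \sqcup S^B)$ with $S^A$ placed entirely to the left of $S^B$. Since blocks of $A$ and $B$ lie on disjoint sets of points, noncrossing and singleton-not-nested carry over automatically, and compatible labelings of $C$ split as pairs of compatible labelings of $A$ and $B$, giving $[C]_T = [A]_T \otimes [B]_T$.

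For (iii), let $s_1 < \cdots < s_l$ enumerate the singletons of $p_\ell^A$ and $t_1 < \cdots < t_l$ those of $p_u^B$. The singleton-not-nested condition in $p_\ell^A$ forces every triple of $p_\ell^A$ to lie inside one of the gaps $S^A_k = \{x \in S^A : s_k < x < s_{k+1}\}$ (with $s_0 = -\infty$, $s_{l+1} = +\infty$), and symmetrically every triple of $p_u^B$ lies inside some $S^B_k$. Form $S^C$ by identifying $s_k \sim t_k$ for each $k$, and linearly order the result as
\[
S^A_0,\ S^B_0,\ s_1,\ S^A_1,\ S^B_1,\ s_2,\ \ldots,\ s_l,\ S^A_l,\ S^B_l,
\]
preserving the internal orders of each piece. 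Define $p_u^C$ to consist of the blocks of $p_u^A$ (viewed on their $S^A$-positions in $S^C$, so that the identified point $s_k = t_k$ inherits the $p_u^A$-role) together with the triples of $p_u^B$ (which live entirely in $S^B \setminus \{t_1, \ldots, t_l\}$); define $p_\ell^C$ symmetrically from $p_\ell^B$ and the triples of $p_\ell^A$. The $\NC_3^\circ$ property reduces to two observations: the embeddings $S^A \hookrightarrow S^C$ and $S^B \hookrightarrow S^C$ preserve their respective orders, so the blocks from each side remain internally noncrossing; and each $S^A_k$ and each $S^B_k$ is a contiguous subinterval of $S^C$, so no point of $S^A$ lies strictly between two $S^B_k$-points and vice versa, preventing any cross-side nesting. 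A compatible labeling of $C$ then restricts to compatible labelings of $A$ and $B$ that agree on the common value $K \in F^l$ at the identified singletons, and summing over $K$ yields $\langle [B]_T \circ [A]_T e_J, e_I \rangle$, so $[C]_T = [B]_T \circ [A]_T$.

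Part (iv) then follows: the elementary intertwiners $1^{\otimes a} \otimes E \otimes 1^{\otimes b}$ and $1^{\otimes c} \otimes E^* \otimes 1^{\otimes d}$ are diagram maps by (ii) together with the trivial diagrams for $E$, $E^*$ and $\id_{\CC^F}$ that are immediate from Definition~\ref{def:diagram-calculus}; hence by (iii), every product of such elementary intertwiners is a diagram map. By Remark~\ref{rem:Tannaka-point-of-view} these products linearly span $(u^{\otimes n}, u^{\otimes m})$, finishing (iv). The main obstacle I anticipate is the careful verification in (iii) that the chosen ordering on $S^C$ respects both halves of the $\NC_3^\circ$ condition; once the contiguity of the $S^A_k$ and $S^B_k$ inside $S^C$ is established, the remaining bijection between labelings of $C$ and matching pairs of labelings of $A$ and $B$ is routine.
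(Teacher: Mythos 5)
Your proof is correct and takes essentially the same route as the paper's sketch: swap the two partitions for (i), juxtapose for (ii), glue along the identified singletons and sum over the intermediate label for (iii), and combine (i)--(iii) with Remark~\ref{rem:Tannaka-point-of-view} for (iv). The only place you go beyond the paper is (iii), where the paper merely instructs the reader to ``rearrange the points'' and refers to Figure~\ref{fig:composition-example}; your explicit interleaving order $S^A_0,\,S^B_0,\,s_1,\,S^A_1,\,S^B_1,\,\dots$ and the observation that the singleton-not-nested condition confines the relevant triples to single gaps $S^A_k$ or $S^B_k$ is exactly the verification the paper calls ``straightforward but tedious'' and omits; it reproduces the diagram $C$ in the paper's figure.
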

\begin{proof}[Sketch of proof]
To prove (i), it suffices to exchange the upper and lower partitions of $D$. Statement (ii) follows by juxtaposing the respective upper and lower partitions of $A$ and $B$.

The diagram $C$ in statement~(iii) can be visualized as follows. it suffices to identify the singletons of the lower partition of $A$ with the singletons of the upper partition of $B$ and rearrange the points to get a new intertwiner diagram $C$ as in Figure~\ref{fig:composition-example}. Label the newly identified points as $c_1,\ldots,c_l$.
Then, given any triangle presentation $T$ with base set $F$, one checks that
\begin{align*}
\langle &[C]_T e_J, e_I\rangle\\ &= \sum_{K\in F^l} \#\{T\text{-labelings } \lambda \text{ of } C \text{ compatible with } (I,J) \text{ such that } K=(\lambda(c_1),\ldots,\lambda(c_l))\}\\
&=\sum_{K\in F^l} \langle [B]_T e_K, e_I\rangle \langle [A]_T e_J, e_K\rangle\\
&=\langle [B]_T[A]_T e_J, e_I\rangle\;,
\end{align*}
where $I,J$ are arbitrary multi-indices in $F^n$ and $F^m$, respectively.

To prove (iv), note that if $D$ is the left diagram of Figure \ref{fig:nc3circ-example}, then $[D]_T : \CC \recht (\CC^N)^{\ot 3}$ is given by the invariant vector $E$. By induction on the number of blocks of size three, it follows that $[D]_T$ is an intertwiner for every intertwiner diagram $D$. Since $E\in (uuu,\varepsilon)$ and $1\in (u,u)$ can be realized by diagrams, it follows from (i), (ii), (iii) and Remark \ref{rem:Tannaka-point-of-view} that the intertwiner spaces are linearly spanned by the operators $[D]_T$.
\end{proof}

\begin{figure}[ht]
    \[
            \begin{tikzpicture}[scale=0.7,yscale=0.5, thick, every circle node/.style={draw, fill=black!10, minimum width=2pt, inner sep=2pt}]
                    \node at (0,8) {$A=$};
                    \node[circle] (pta1) at (1,8) [draw] {};
                    \node[circle] (pta2) at (2,8) [draw] {};
                    \node[circle] (pta3) at (3,8) [draw] {};
                    \node[circle] (pta4) at (4,8) [draw] {};
                    \node[circle] (pta5) at (5,8) [draw] {};
                    \draw (pta3) -- (3,10) -- (5,10) -- (pta5);
                    \draw (pta4) -- (4,10);
                    \draw (pta1) -- (1,6) -- (3,6) -- (pta3);
                    \draw (pta2) -- (2,6);

                    \node at (0,0) {$B=$};
                    \node[circle] (ptb1) at (1,0) [draw] {};
                    \node[circle] (ptb2) at (2,0) [draw] {};
                    \node[circle] (ptb3) at (3,0) [draw] {};
                    \node[circle] (ptb4) at (4,0) [draw] {};
                    \node[circle] (ptb5) at (5,0) [draw] {};
                    \node[circle] (ptb6) at (6,0) [draw] {};
                    \node[circle] (ptb7) at (7,0) [draw] {};
                    \node[circle] (ptb8) at (8,0) [draw] {};
                    \draw (ptb1) -- (1,-2) -- (3,-2) -- (ptb3);
                    \draw (ptb2) -- (2,-2);
                    \draw (ptb2) -- (2,3) -- (7,3) -- (ptb7);
                    \draw (ptb3) -- (3,3);
                    \draw (ptb4) -- (4,2) -- (6,2) -- (ptb6);
                    \draw (ptb5) -- (5,2);
                    \draw (ptb6) -- (6,-2) -- (8,-2) -- (ptb8);
                    \draw (ptb7) -- (7,-2);

                    \draw [line width=1pt, dashed, color=black!40] (pta4) .. controls (3.5,6) and (1,3) .. (ptb1);
                    \draw [line width=1pt, dashed, color=black!40] (pta5) .. controls (6,6) and (8,4) .. (ptb8);

                    \begin{scope}[xshift=10cm,yshift=12cm]
                    \node at (0,-8) {$C=$};
                    \node[circle] (ptc1) at (1,-8) [draw] {};
                    \node[circle] (ptc2) at (2,-8) [draw] {};
                    \node[circle] (ptc3) at (3,-8) [draw] {};
                    \node[circle] (ptc4) at (4,-8) [draw] {};
                    \node[circle] (ptc5) at (5,-8) [draw] {};
                    \node[circle] (ptc6) at (6,-8) [draw] {};
                    \node[circle] (ptc7) at (7,-8) [draw] {};
                    \node[circle] (ptc8) at (8,-8) [draw] {};
                    \node[circle] (ptc9) at (9,-8) [draw] {};
                    \node[circle] (ptc10) at (10,-8) [draw] {};
                    \node[circle] (ptc11) at (11,-8) [draw] {};

                    \draw (ptc1) -- (1,-10) -- (3,-10) -- (ptc3);
                    \draw (ptc2) -- (2,-10);
                    \draw (ptc4) -- (4,-10) -- (6,-10) -- (ptc6);
                    \draw (ptc5) -- (5,-10);
                    \draw (ptc3) -- (3,-4) -- (11,-4) -- (ptc11);
                    \draw (ptc4) -- (4,-4);
                    \draw (ptc5) -- (5,-5) -- (10,-5) -- (ptc10);
                    \draw (ptc6) -- (6,-5);
                    \draw (ptc7) -- (7,-6) -- (9,-6) -- (ptc9);
                    \draw (ptc8) -- (8,-6);
                    \draw (ptc9) -- (9,-10) -- (11,-10) -- (ptc11);
                    \draw (ptc10) -- (10,-10);
                    \end{scope}
                \end{tikzpicture}
        \]
        \caption{Composition of two intertwiner diagrams $A$ and $B$ resulting in a third intertwiner diagram $C$.}
        \label{fig:composition-example}
\end{figure}


\subsection{\boldmath Towards monoidal equivalence of \texorpdfstring{$\GG_T$}{G\_T}}

The simplest intertwiner diagrams are those given by a noncrossing partition with block size $3$. Denote by $\NC_3(n)$ the set of all partitions of $3n$ points in $n$ noncrossing blocks of size $3$. For every triangle presentation $T$ with associated compact quantum group $\GG_T$ and for every $p \in \NC_3(n)$, the vector $[p]_T$ is an invariant vector of $u^{\ot 3n}$. In particular, the unique element in $\NC_3(1)$, i.e.\ the partition on the left of Figure~\ref{fig:nc3circ-example}, corresponds to the invariant vector $E\in u^{\otimes 3}$. Note however that for all $n \geq 2$, the representation $u^{\otimes 3n}$ contains more invariant vectors than those spanned by partitions in $\NC_3(n)$.

Even though the partitions in $p \in \NC_3(n)$ are not enough to produce all invariant vectors of $u^{\ot 3n}$, the following proposition says that the scalar products between the vectors $[p]_T$ suffice to determine the entire monoidal structure of $\Rep(\GG_T)$. This reduces Question~\ref{rem:monoidal-equivalence} to a very concrete combinatorial problem, which we could however not solve.

\begin{proposition}\label{thm:inn-prod-monoidal-equivalence}
Let $T$ and $T'$ be triangle presentations of the same order $q\geq 2$. Assume that
\begin{equation}\label{eqn:inn-prods-equal}
\langle [p_1]_T, [p_2]_T\rangle = \langle [p_1]_{T'}, [p_2]_{T'}\rangle\;,
\end{equation}
for all $n\in\NN$ and all $p_1,p_2\in\NC_3(n)$. Then $\GG_T$ and $\GG_{T'}$ are monoidally equivalent.
\end{proposition}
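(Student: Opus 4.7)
By Woronowicz's Tannaka--Krein duality, constructing a monoidal equivalence $\Rep(\GG_T) \simeq \Rep(\GG_{T'})$ amounts to providing compatible linear isomorphisms between the intertwiner spaces $(u^{\otimes n}, u^{\otimes m})$ on the two sides that preserve composition, tensor product, the adjoint, and the identity. Introduce the $T$-independent dagger tensor category $\mathcal{D}$ whose objects are the non-negative integers and whose morphism spaces $\mathrm{Hom}_{\mathcal{D}}(m,n)$ are freely spanned as $\CC$-vector spaces by intertwiner diagrams of type $(n,m)$, with composition, tensor product, and adjoint defined combinatorially via Lemma~\ref{thm:diagram-calculus-props}. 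The evaluations $[\cdot]_T$ and $[\cdot]_{T'}$ extend to full dagger tensor functors $F_T : \mathcal{D} \to \Rep(\GG_T)$ and $F_{T'} : \mathcal{D} \to \Rep(\GG_{T'})$. Since, by Remark~\ref{rem:Tannaka-point-of-view}, every irreducible representation of $\GG_T$ and of $\GG_{T'}$ appears inside a suitable $u^{\otimes k}$, the Cauchy (subobject and direct sum) completions of $\mathcal{D}/\ker F_T$ and $\mathcal{D}/\ker F_{T'}$ are equivalent, as rigid dagger tensor categories, to $\Rep(\GG_T)$ and $\Rep(\GG_{T'})$ respectively. Hence it suffices to establish $\ker F_T = \ker F_{T'}$.

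Both $F_T$ and $F_{T'}$ take values in C$^*$-tensor categories, so each kernel is exactly the radical of the positive sesquilinear Gram form $(D_1, D_2) \mapsto \langle [D_1]_T, [D_2]_T\rangle$, respectively $(D_1, D_2) \mapsto \langle [D_1]_{T'}, [D_2]_{T'}\rangle$. It is therefore enough to prove the pointwise equality
$$\langle [D_1]_T, [D_2]_T \rangle \;=\; \langle [D_1]_{T'}, [D_2]_{T'} \rangle$$
for all intertwiner diagrams $D_1, D_2$ of the same type $(n,m)$. Denoting by $C$ the intertwiner diagram of type $(m,m)$ formally defined as the composition $D_2^* \circ D_1$ (well-defined and $T$-independent by parts (i) and (iii) of Lemma~\ref{thm:diagram-calculus-props}), the categorical trace identifies $\langle [D_1]_T, [D_2]_T \rangle$ with $\Tr_{u^{\otimes m}}([C]_T)$.

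To evaluate this trace, unfold the standard solutions $s_{u^{\otimes m}}, t_{u^{\otimes m}}$ recursively in terms of $s_u = \tfrac{1}{\sqrt{q+1}}(1 \otimes Y^*) E$ and $t_u = \tfrac{1}{\sqrt{q+1}}(Y^* \otimes 1) E$. Closing up $[C]_T$ via these solutions yields a closed diagram in $E, E^*, Y, Y^*$ and identity strands. Every $\bar u$-segment of this closed diagram is bounded below by some $Y^*$ and above by some $Y$, and can be eliminated by the identity
$$YY^* \;=\; P \;=\; \frac{1}{q+1}\bigl(1 \otimes 1 \otimes E^*\bigr)\bigl(E \otimes 1 \otimes 1\bigr)$$
from Lemma~\ref{thm:basic-intertwiners}. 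After all $Y, Y^*$ have been cleared, $\Tr_{u^{\otimes m}}([C]_T)$ becomes the scalar evaluation of a closed $\NC_3$-diagram $\Pi_C$ built purely from $E$ and $E^*$, multiplied by a factor $(q+1)^{-N_C}$ whose exponent $N_C$ depends only on $C$. Cutting $\Pi_C$ horizontally through the interface separating its layer of opening $E$-triangles (coming from the $s, t$ closures and from the $P$-substitutions) from its layer of closing $E^*$-triangles exhibits $\Pi_C$ as $\langle [p_C^{(1)}]_T, [p_C^{(2)}]_T \rangle$ for two explicit partitions $p_C^{(1)}, p_C^{(2)} \in \NC_3(k)$ that depend only on the combinatorics of $C$.

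Assembling these pieces,
$$\Tr_{u^{\otimes m}}([C]_T) \;=\; (q+1)^{-N_C}\, \bigl\langle [p_C^{(1)}]_T, [p_C^{(2)}]_T\bigr\rangle,$$
so under the hypothesis \eqref{eqn:inn-prods-equal} we obtain $\Tr_{u^{\otimes m}}([C]_T) = \Tr_{u^{\otimes m}}([C]_{T'})$, hence $\langle [D_1]_T, [D_2]_T \rangle = \langle [D_1]_{T'}, [D_2]_{T'} \rangle$, hence $\ker F_T = \ker F_{T'}$, and finally $\Rep(\GG_T) \simeq \Rep(\GG_{T'})$ as desired. The main obstacle is making the final combinatorial step rigorous: one must coordinate the recursive unfolding of $s_{u^{\otimes m}}, t_{u^{\otimes m}}$ with the substitutions $YY^* \mapsto P$ to ensure that $\Pi_C$ admits the claimed separating horizontal cut into two partitions in $\NC_3(k)$ with matching strand labels. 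In the event that a single-cut identification is unavailable for some $C$, the weaker statement that $\Tr_{u^{\otimes m}}([C]_T)$ is a $T$-independent finite linear combination of $\NC_3$ inner products is what the argument actually needs, and is the most robust formulation of the reduction.
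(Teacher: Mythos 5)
Your overall strategy is the right one: reduce to showing that, for every intertwiner diagram $C$ of type $(m,m)$, the scalar $\Tr_{u^{\otimes m}}([C]_T)$ is a $T$-independent combination of inner products $\langle [p_1]_T,[p_2]_T\rangle$ with $p_1,p_2\in\NC_3$, and then invoke the calculus of Lemma~\ref{thm:diagram-calculus-props} together with Tannaka--Krein/Cauchy-completion to get the monoidal equivalence. What is missing is the actual reduction, and you acknowledge this yourself: the plan to unfold $s_{u^{\otimes m}}$, $t_{u^{\otimes m}}$ via $s_u=\tfrac{1}{\sqrt{q+1}}(1\otimes Y^*)E$, eliminate every $Y,Y^*$ through $YY^*=P$, and then exhibit a single horizontal cut splitting the resulting closed picture into a ``lower $\NC_3$'' and an ``upper $\NC_3$'' does not work in general, because $[C]_T$ itself may interleave $E$'s and $E^*$'s (a type-$(m,m)$ diagram can have blocks of the lower partition nested inside blocks of the upper partition and vice versa). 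There is therefore no reason a priori that after the substitutions one obtains something of the form $[p_2]_T^*[p_1]_T$; the ``weaker formulation'' you offer as a fallback is indeed what one needs, but you do not establish it either.

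The paper closes exactly this gap by avoiding the standard conjugate altogether. Instead of unfolding $s,t$ and chasing $Y,Y^*$ cancellations, it uses the explicit partitions $p_n\in\NC_3(n)$ (defined recursively, with $p_1$ the single triple) and the identity
\[
(q+1)^n\,\Tr_{u^{\otimes n}}(V)=[p_n]_T^*\,\bigl(V\otimes 1^{\otimes 2n}\bigr)\,[p_n]_T\qquad\text{for all }V\in(u^{\otimes n},u^{\otimes n}).
\]
Once one has this trace formula, Lemma~\ref{thm:diagram-calculus-props}(i)--(iii) immediately produces a \emph{single} $T$-independent diagram $D_{\Tr}$ of type $(0,0)$ with $[p_n]_T^*([D]_T\otimes 1^{\otimes 2n})[p_n]_T=[D_{\Tr}]_T$, and a type-$(0,0)$ diagram is by definition a pair of $\NC_3$ partitions, so $[D_{\Tr}]_T=\langle [p_1]_T,[p_2]_T\rangle$ is covered by hypothesis~\eqref{eqn:inn-prods-equal}. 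In other words, the messy ``coordination of the recursive unfolding'' that worries you is replaced by a clean, $T$-independent formula for the categorical trace in terms of a single $\NC_3$ partition, after which the composition calculus does all the work. You would have arrived at a complete proof had you proved (and used) this trace identity -- it is verified directly for $n=1$ from $(1\otimes P)E=E$, and follows for general $n$ by induction on the recursive shape of $p_n$ -- instead of trying to close the diagram with the standard conjugate and then undo the $Y$'s.
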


\begin{proof}
We denote the fundamental representation of $\GG_T$ by $u$ and the one of $\GG_{T'}$ by $u'$.
The assumption \eqref{eqn:inn-prods-equal} can be rephrased as follows: for any intertwiner diagram $D_0$ of type $(0,0)$, $[D_0]_T=[D_0]_{T'}$.

We first show that $\Tr_{u^{\otimes n}}([D]_T)=\Tr_{u'^{\otimes n}}([D]_{T'})$ for all diagrams $D$ of type $(n,n)$.
To this end, inductively define the sequence $p_n \in \NC_3(n)$ by defining $p_0$ as the empty partition and
\[
        p_n=\quad\begin{tikzpicture}[scale=0.8,baseline={([yshift=-1.2ex]current bounding box.center)},thick, yscale=0.5, every circle node/.style={draw, fill=black!10, minimum width=2pt, inner sep=2pt}]
				\node[circle] (outer1) at (0,0) [draw] {};
                \roundedbox{(2,1)}{1}{0}{$p_{n-1}$};
				\node[circle] (midleg) at (4,0) [draw] {};
				\node[circle] (outer2) at (5,0) [draw] {};
				\coordinate (outer1b) at (0,3);
                \coordinate (midlegb) at (4,3);
				\coordinate (outer2b) at (5,3);
				\draw (outer1) -- (outer1b) -- (outer2b) -- (outer2);
                \draw (midlegb) -- (midleg);
			\end{tikzpicture}
\]
For any intertwiner $V\in (u^{\otimes n}, u^{\otimes n})$, we have that
\[
(q+1)^n\Tr_{u^{\otimes n}}(V)=[p_n]_T^* (V\otimes 1^{\otimes 2n}) [p_n]_T \; .
\]
By Lemma~\ref{thm:diagram-calculus-props}, we can associate to every intertwiner diagram $D$ of type $(n,n)$, an intertwiner diagram $D_{\Tr}$ of type $(0,0)$ such that
$$[p_n]_T^* ([D]_T \otimes 1^{\otimes 2n}) [p_n]_T = [D_{\Tr}]_T \quad\text{and}\quad [p_n]_{T'}^* ([D]_{T'}\otimes 1^{\otimes 2n}) [p_n]_{T'} = [D_{\Tr}]_{T'} \; .$$
Since by assumption, $[D_{\Tr}]_T = [D_{\Tr}]_{T'}$, it follows that
\begin{equation}\label{eqn:trace-preserved}
\Tr_{u^{\otimes n}}([D]_T)= \Tr_{u'^{\otimes n}}([D]_{T'}) \; .
\end{equation}

If $A$ and $B$ are intertwiner diagrams of type $(m,n)$, then Lemma~\ref{thm:diagram-calculus-props} gives us an intertwiner diagram $C$ of type $(n,n)$ such that $[B]_T^* [A]_T=[C]_T$ and $[B]_{T'}^* [A]_{T'}=[C]_{T'}$. Hence \eqref{eqn:trace-preserved} implies that $\langle [A]_T,[B]_T\rangle=\langle [A]_{T'},[B]_{T'}\rangle$.
This means that there are unique and well defined unitary linear maps
$$\theta_{m,n} : (u^{\otimes m},u^{\otimes n}) \recht (u'^{\otimes m},u'^{\otimes n})$$
satisfying $\theta_{m,n}([D]_T) = [D]_{T'}$ for all intertwiner diagrams $D$ of type $(m,n)$.

Using the diagrammatic description of composition, involution and tensor products of intertwiners in Lemma~\ref{thm:diagram-calculus-props}, it follows that the unitary linear maps $\theta_{m,n}$ respect composition, involution and tensor products of intertwiners. So these maps give rise to a unitary monoidal equivalence from the full subcategory of $\Rep(\GG_T)$ spanned by the objects $u^{\otimes n}$, $n\in\NN$, to the full subcategory of $\Rep(\GG_{T'})$ spanned by the objects ${u'}^{\otimes n}$, $n\in\NN$. By universality (see \cite[pp.~71--72]{neshveyev-tuset}), this extends to a unitary monoidal equivalence between $\Rep(\GG_T)$ and $\Rep(\GG_{T'})$.
\end{proof}

\begin{remark}\label{rem:indication-mon-equiv}
By Proposition \ref{thm:inn-prod-monoidal-equivalence}, in order to answer Question \ref{rem:monoidal-equivalence} positively and hence prove that $\Rep(\GG_T)$ only depends on the order of $T$, it suffices to prove that the scalar products $\langle [p_1]_T, [p_2]_T \rangle$ only depend on the partitions $p_1,p_2 \in \NC_3$ and the order of $T$.

While we could not prove this in general, checking \eqref{eqn:inn-prods-equal} for concrete partitions is a matter of counting solutions to a constraint satisfaction problem, which is easy to implement on a computer. We have tested \eqref{eqn:inn-prods-equal} for all pairs $p_1,p_2\in\NC_3(n)$ with $n\leq 9$ and for the triangle presentations of order $2$ numbered $A.1$, $A.1'$ and $B.1$ in \cite{cmsz2}. For $q=3$, we numerically verified \eqref{eqn:inn-prods-equal} for all pairs in $p_1,p_2\in\NC_3(n)$ with $n\leq 7$ and a selection of triangle presentations, including some triangle presentations with exotic buildings (nos.\@ $1.1$, $1.1'$, $22.1$ and $36.1$ in \cite{cmsz2}).

This provides some empirical justification to conjecture that, at least for small values of $q$, the answer to Question \ref{rem:monoidal-equivalence} is yes.

Due to computational limitations, we have not tried any higher-order triangle presentations. It appears that triangle presentations can behave very exotically for higher values of $q$ (see e.g.\@ \cite{radu-nondesarg-pres}) and we refrain from making any claims about $q>3$ here.

Another approach to answer Question \ref{rem:monoidal-equivalence} could be to use Remark \ref{rem:planar-algebra} as a starting point to give a graphical description of the planar algebra defined by a triangle presentation. One might then hope to use planar algebra methods to prove that these planar algebras only depend, up to isomorphism, on the order of the triangle presentation.
\end{remark}



\begin{thebibliography}{CMSZ93b}\setlength{\itemsep}{-1mm} \setlength{\parsep}{0mm} \small


\bibitem[Ara14]{arano-suqn}
Y.~Arano, \emph{Unitary spherical representations of Drinfeld doubles}, J. Reine Angew. Math. (2016), 1435--5345.

\bibitem[Ara16]{arano-drinfeld-unitary-duals}
Y.~Arano, \emph{Comparison of unitary duals of Drinfeld doubles and complex semisimple Lie groups}, Commun. Math. Phys. \textbf{351} (2017), 1137--1147.

\bibitem[AdLW17]{arano-delaat-wahl}
Y.~Arano, T.~de Laat and J.~Wahl, \emph{The Fourier algebra of a rigid $C^*$-tensor category}, Publ. Res. Inst. Math. Sci. \textbf{54} (2018), 393--410.

\bibitem[AV16]{arano-vaes}
Y.~Arano and S.~Vaes, \emph{C*-tensor categories and subfactors for totally disconnected groups}, in ``Operator Algebras and Applications'', Abel Symposium 2015, Springer, 2016, pp.\ 1--43.

\bibitem[BS92]{baaj-skandalis}
S.~Baaj and G.~Skandalis, \emph{Unitaires multiplicatifs et dualit\'{e} pour les produits crois\'{e}s de $C^*$-alg\`{e}bres}, Ann. Sci. \'{E}cole Norm. Sup. (4) {\bf 26} (1993), 425--488.


\bibitem[BP98]{bisch-popa}
D.~Bisch and S.~Popa, \emph{Examples of subfactors with property T standard invariant}, Geom. Funct. Anal. {\bf 9} (1999), 215--225.

\bibitem[Bou16]{bouljihad}
M.~Bouljihad, \emph{Existence of rigid actions for finitely-generated non-amenable linear groups}, preprint. \href{http://arxiv.org/abs/1606.05911}{arXiv:1606.05911}

\bibitem[BO08]{brown-ozawa}
N.P. Brown and N.~Ozawa, \emph{C*-algebras and finite-dimensional approximations}, Graduate Studies in Mathematics {\bf 88}, American Mathematical Society, 2008.


\bibitem[CMSZ91a]{cmsz1}
D.I. Cartwright, A.M. Mantero, T.~Steger, and A.~Zappa, \emph{Groups acting simply transitively on the vertices of a building of type $\tilde{A}_2$,
  I}, Geometriae Dedicata \textbf{47} (1993), 143--166.

\bibitem[CMSZ91b]{cmsz2}
D.I. Cartwright, A.M. Mantero, T.~Steger, and A.~Zappa, \emph{Groups acting simply transitively on the vertices of a building of type $\tilde{A}_2$, II}, Geometriae Dedicata \textbf{47} (1993), 167--223.

\bibitem[CMS93]{cms}
D.I. Cartwright, W.~M{\l}otkowski, and T.~Steger, \emph{Property {(T)} and $\tilde{A}_2$ groups}, Ann. Inst. Fourier \textbf{44} (1994), 213--248.

\bibitem[D86]{drinfeld}
V.G.~Drinfel'd, \emph{Quantum groups}, in ``Proceedings of the International Congress of Mathematicians (Berkeley, 1986)'', Amer. Math. Soc., Providence, 1987, pp.\ 798--820.

\bibitem[FH64]{feit-higman}
W.~Feit and G.~Higman, \emph{The nonexistence of certain generalized polygons}, J. Algebra \textbf{1} (1964), 114--131.

\bibitem[Fim08]{fima-prop-T}
P.~Fima, \emph{Kazhdan's property {(T)} for discrete quantum groups}, Internat. J. Math. \textbf{21} (2010), 47--65.

\bibitem[FMP15]{fima-bicrossed}
P.~Fima, K.~Mukherjee and I.~Patri, \emph{On compact bicrossed products}, J. Noncommut. Geom. \textbf{11} (2017), 1521--1591.

\bibitem[GJ15]{ghosh-jones}
S.K. Ghosh and C.~Jones, \emph{Annular representation theory for rigid $C^*$-tensor categories}, J. Funct. Anal. \textbf{270} (2016), 1537--1584.

\bibitem[Gur90]{gurevich}
D.I. Gurevich, \emph{Algebraic aspects of the quantum Yang-Baxter equation}, Leningrad Math. J. {\bf 2} (1991), 801--828.

\bibitem[Ji85]{jimbo}
M.~Jimbo, \emph{A $q$-difference analogue of $U(\mathfrak{g})$ and the Yang-Baxter equation}, Lett. Math. Phys. {\bf 10} (1985), 63--69.

\bibitem[Jon99]{jones:planar}
V.F.R.~Jones, \emph{Planar algebras}, preprint, Berkeley 1999, \href{http://arxiv.org/abs/math.QA/9909027}{arXiv:math.QA/9909027}

\bibitem[Jon15]{corey-jones}
C. Jones, \emph{Quantum $G_2$ categories have property (T)}, Internat. J. Math. {\bf 27} (2016), art.\ id.\ 1650015, 23 pp.


\bibitem[Kli60]{klingenberg}
W.~Klingenberg, \emph{Lineare Gruppen \"{u}ber lokalen Ringen}, Amer. J. Math. {\bf 83} (1961), 137--153.


\bibitem[MPW15]{descent-in-buildings}
B.~M\"{u}hlherr, H.P.~Petersson and R.M.~Weiss, \emph{Descent in buildings}, Annals of Mathematics Studies {\bf 190}, Princeton University Press, Princeton, NJ, 2015.

\bibitem[Kac68]{Kac}
G.I.~Kac, \emph{Extensions of groups to ring groups}, Math. USSR Sbornik {\bf 5} (1968), 451--474.

\bibitem[NT10]{nesh-tus-cohom}
S.~Neshveyev and L.~Tuset, \emph{On second cohomology of duals of compact groups}, Internat. J. Math. {\bf 22} (2011), 1231--1260.

\bibitem[NT13]{neshveyev-tuset}
S.~Neshveyev and L.~Tuset, \emph{Compact quantum groups and their representation categories}, Cours Sp\'{e}cialis\'{e}s, no.~20, Soci\'{e}t\'{e} Math\'{e}matique de France, 2013.

\bibitem[NY15]{neshveyev-yamashita}
S.~Neshveyev and M.~Yamashita, \emph{Drinfeld center and representation theory for monoidal categories}, Commun. Math. Phys. \textbf{345} (2016), 385--434.

\bibitem[Ocn93]{ocneanu-chirality}
A.~Ocneanu, \emph{Chirality for operator algebras}, in ``Subfactors (Kyuzeso, 1993)'', World Sci. Publ., River Edge, 1994, pp.\ 39--63.

\bibitem[Pin97]{pink}
R.~Pink, \emph{Compact subgroups of linear algebraic groups}, J. Algebra {\bf 206} (1998), 438--504.


\bibitem[Pop92]{popa-classification-amenable}
S.~Popa, \emph{Classification of amenable subfactors of type II}, Acta Math. {\bf 172} (1994), 163--255.

\bibitem[Pop94]{popa-lambda-lattices}
S.~Popa, \emph{An axiomatization of the lattice of higher relative commutants of a subfactor}, Invent. Math. {\bf 120} (1995), 427--445.

\bibitem[Pop97]{popa-symmenv}
S.~Popa, \emph{Some properties of the symmetric enveloping algebra of a subfactor, with applications to amenability and property~(T)}, Doc. Math.
  \textbf{4} (1999), 665--744.


\bibitem[PSV15]{psv-cohom}
S.~Popa, D.~Shlyakhtenko and S.~Vaes, \emph{Cohomology and $L^2$-Betti numbers for subfactors and quasi-regular
  inclusions}, {\bf 2018}, no.\ 8 (2018), 2241-2331.

\bibitem[PV14]{pv-repr-subfactors}
S.~Popa and S.~Vaes, \emph{Representation theory for subfactors, $\lambda$-lattices and $C^*$-tensor categories}, Commun. Math. Phys.
  \textbf{340} (2015), 1239--1280.

\bibitem[Rad16]{radu-nondesarg-pres}
N.~Radu, \emph{A lattice in a residually non-Desarguesian $\tilde{A}_2$-building},  Bull. Lond. Math. Soc. \textbf{49} (2017), 274--290.

\bibitem[Ron89]{ronan}
M.~Ronan, \emph{Lectures on buildings}, Perspectives in Mathematics, no.~7, Academic Press, 1989.


\bibitem[VV01]{vaes-vainerman}
S.~Vaes and L.~Vainerman, \emph{Extensions of locally compact quantum groups and the bicrossed product construction}, Adv. Math. {\bf 175} (2003), 1--101.

\bibitem[VD94]{van-daele}
A. Van Daele, \emph{Discrete quantum groups}, J. Algebra {\bf 180} (1996), 431–-444.

\bibitem[Wor86]{woron-su2}
S.L. Woronowicz, \emph{Twisted $\SU(2)$ group. An example of a noncommutative differential calculus}, Publ. Res. Inst. Math. Sci. {\bf 23} (1987), 117--181.

\bibitem[Wor88]{woron-tkdual}
S.L. Woronowicz, \emph{Tannaka--Krein duality for compact matrix pseudogroups. Twisted $\mathrm{SU}(N)$-groups}, Invent. Math. \textbf{93} (1988), 35--76.

\bibitem[Wor98]{woron-cqg}
S.L. Woronowicz, \emph{Compact quantum groups}, in ``Sym\'{e}tries quantiques (Les Houches, 1995)'', North-Holland, Amsterdam, 1998, pp.\ 845--884.

\bibitem[{\.Z}uk01]{zuk-propT}
A.~{\.Z}uk, \emph{Property~(T) and Kazhdan constants for discrete groups}, Geom. Funct. Anal. \textbf{13} (2003), 643--670.
\end{thebibliography}
\end{document}